\documentclass[a4paper,11pt,twoside]{article}

\RequirePackage[colorlinks,citecolor=blue,urlcolor=blue]{hyperref}
\usepackage[ruled,vlined]{algorithm2e}
%%%%% BEGIN TYPOGRAPHY SETUP:
%\usepackage[latin1]{inputenc}
\usepackage[utf8]{inputenc}
\usepackage[T1]{fontenc}
\usepackage[english]{babel}
\usepackage{soul} % strike out with \st{}
\usepackage{charter}
\usepackage{enumerate}
\frenchspacing
\usepackage{indentfirst}
\usepackage{xcolor}
\usepackage[top=5.cm, bottom=5.0cm, left=3cm, right=3cm]{geometry}
\usepackage{authblk}
\usepackage{mathtools}
%%%%% END TYPOGRAPHY SETUP

\usepackage{graphicx}

\usepackage{amsmath}
\usepackage{amsthm}	
\usepackage{amsfonts}
\usepackage{subcaption}	
\usepackage{amssymb}
%\usepackage{amssymb	
%	%,showkeys
%	,bbm
%	,units 
%	,xfrac
%	}

%Definition of theorems
\theoremstyle{plain}
\newtheorem{theorem}{Theorem}[section]
\newtheorem{lemma}[theorem]{Lemma}

\newtheorem{assumption}[theorem]{Assumption}

\theoremstyle{definition}

%NUMBERING OF LEMMAS/THEOS AND SO ONE & TIME
\numberwithin{equation}{section}
\numberwithin{figure}{section}
 \usepackage[nodayofweek]{datetime}

%\usepackage{amsrefs}
%\usepackage[nobysame
%           ,alphabetic
%%           ,numeric
%           ,initials
%           ]{amsrefs}

%TIME
\usepackage[nodayofweek]{datetime}

%%%%%%%%%%%%%%%%%
%ARNAUD'S COMMANDS
%Note
%naturalnumbers
\newcommand{\R}{\mathbb{R}}	%real numbers
\newcommand{\F}{\mathcal{F}}	%for sigma-fields
	%for absolute value
\newcommand{\norm}[1]{\lVert#1\rVert}	%for norm
	%for absolute value
	%for norm
%for 1/2

%produit scalaire
%commande for E(  | \F_t) 
%for convergence

%%%%%%%%%%%%%%%%%

%Bold font

\newcommand{\bE}{\mathbb{E}}

\newcommand{\bN}{\mathbb{N}}
\newcommand{\bP}{\mathbb{P}}

\newcommand{\bR}{\mathbb{R}}

%Calipgrahic

\newcommand{\cF}{\mathcal{F}}
\newcommand{\cG}{\mathcal{G}}

\newcommand{\cM}{\mathcal{M}}

\newcommand{\cO}{\mathcal{O}}
\newcommand{\cP}{\mathcal{P}}

\newcommand{\cV}{\mathcal{V}}
\newcommand{\cX}{\mathcal{X}}

%%%%%%%%% discretization commands

\def\t{\tau}

%%%%%%%% integration commands

%%%%%%%%%% color command
\usepackage[colorinlistoftodos,bordercolor=orange,backgroundcolor=orange!20,linecolor=orange,textsize=scriptsize]{todonotes}
 
%\newcommand{\ls}[1]{\textcolor{red}{\tt Lukas: #1}}
%shuren

%%%%%%%%%% other command by Lukasz

\newcommand{\1}{\mathbbm{1}}

%\newcommand{\impscheme}{\cS\cC\cH^{\eqref{implicitschemefromLdRS2013}}_{eme}}
%\newcommand{\impHRscheme}{\cS\cC\cH^{\eqref{implicitsHRcheme}}_{eme}}
%\newcommand{\TES}{\cS\cC\cH^{\eqref{TE}}_{eme}}
%\newcommand{\TESM}{\cS\cC\cH^{\eqref{TEM}}_{eme}}
%%%%%%%%%%%%%%%%%%%%%%%%%%%%%%%%%%%%%%%%%%%%%%%%%%%%%%%%%%%%

%%%%%%%%% Assumptions commands
\newcommand{\hLip}{\textbf{(Lip)}{ }}
\newcommand{\hkreg}{\textbf{(Ker-Reg)}{ }}

\newcommand{\hvreg}{\textbf{($\cV$-Reg)}{ }}
\newcommand{\hvb}{\textbf{($\cV$-bound)}{ }}

\newcommand{\hvdiffs}{\textbf{(v-diff-Reg+)}}

\newcommand{\hvLip}{\textbf{($\cV$-Lip)}}

\newcommand{\hLaw}{\textbf{($\mu_0$-$L_p$)}{ }}

%%% OLD %%% 

%%%%%%%%%%%%%%%%%%%%%%%%%%%%%%%%%%%%%%%%%%%%%%%%%%%%%%%%%%%

%{\fbox{$\textcolor{blue}{A_i}$}}
%{\fbox{$\textcolor{blue}{B_i}$}}
%{\fbox{$\textcolor{blue}{B^0_i}$}}

%{\fbox{$\textcolor{blue}{\Psi_i}$}}
%{\fbox{$\textcolor{blue}{\bE[\Psi_i]}$}}
%%%%%%%%%%%%%%%%%%%%%%%%%%%%%%%%%%%%%%%%%%%%%%%%%%%%%%%%%%%

%%%% Shuren's commands %%%%%%%%%%

%\newcommand{\R}{\mathbb{R}}
\newcommand{\E}{\mathbb{E}}

\newcommand{\RR}{\mathbb{R}}
\newcommand{\NN}{\mathbb{N}}

\def \F {\mathcal{F}}

\def \eps {\epsilon}

\def \< {\langle}  \def \> {\rangle}

  \def\t{\tau}

\def\1{\mathbf{1}}

\newcommand{\tr}{^T}
\newcommand{\smsec}{\sum_{i,j=1}^d}
\newcommand{\smfir}{\sum_{j=1}^d}
\newcommand{\smfi}{\sum_{i=1}^r}

\newcommand{\Rd}{{\R^d}}

\newcommand{\defeq}{\vcentcolon=}

\newcommand{\ells}{{\ell^{*}}}

\newcommand{\rom}[1]{\uppercase\expandafter{\romannumeral #1\relax}}

%\makeatletter

%\newcommand\subsubsection{\@startsection{subsubsection}{3}{\z@}%
%                                     {-3.25ex\@plus -1ex \@minus -.2ex}%
%                                     {1.5ex \@plus .2ex}%
%                                     {\normalfont\normalsize\bfseries}}

%\renewcommand\paragraph{\@startsection{paragraph}{4}{\z@}%
%                                    {3.25ex \@plus1ex \@minus.2ex}%
%                                    {-1em}%
%                                    {\normalfont\normalsize}}
%\makeatother

%Front page information
\title{Iterative Multilevel Particle Approximation for McKean-Vlasov SDEs}
%\runtitle{Iterative MLMC for McKean-Vlasov SDEs}
\author[1,2]{Lukasz Szpruch}
\author[1]{Shuren Tan}
\author[1]{Alvin Tse}
\affil[1]{School of Mathematics,University of Edinburgh  }
\affil[2]{The Alan Turing Institute, London}

%\author{
%\normalsize  Lukasz Szpruch \\[8pt]
%         \small  University of Edinburgh\\ 
%         \small  School of Mathematics \\
%         \small  Edinburgh, EH9 3JZ, UK\\  
%           \small  and \\  
%  \small The Alan Turing Institute, London, UK   \\         
%         \\
%         \small  l.szpruch@ed.ac.uk
% \and
%         \normalsize Shuren Tan \\[8pt] 
%		\small  University of Edinburgh\\ 
%         \small  School of Mathematics \\
%         \small  Edinburgh, EH9 3JZ, UK\\
%         \\
%        \small Shuren.Tan@ed.ac.uk  
%\and
%         \normalsize Alvin Tse \\[8pt] 
%		\small  University of Edinburgh\\ 
%         \small  School of Mathematics \\
%         \small  Edinburgh, EH9 3JZ, UK\\
%         \\
%        \small A.T.H.Tse@ed.ac.uk
%}
%
%\date{ \currenttime, \ddmmyyyydate\today}
\date{ }

\begin{document}
\selectlanguage{english}
\maketitle

\begin{abstract}
The mean field limits of systems of interacting diffusions (also called stochastic interacting particle systems (SIPS)) have been intensively studied since McKean \cite{mckean1966class} as they pave a way to probabilistic representations for many important nonlinear/nonlocal PDEs. The fact that particles are not independent render classical variance reduction techniques not directly applicable and consequently make simulations of interacting diffusions prohibitive.  

In this article, we provide an alternative iterative particle representation, inspired by the fixed point argument by Sznitman \cite{sznitman1991topics}. The representation enjoys suitable conditional independence property that is leveraged in our analysis. We establish weak convergence of iterative particle system to the McKean-Vlasov SDEs (McKV-SDEs).  One of the immediate advantages of iterative particle system is that it can be combined with the Multilevel Monte Carlo (MLMC) approach for the simulation of McKV-SDEs. We proved that the MLMC approach reduces the computational complexity of calculating expectations by an order of magnitude.
Another perspective on this work is that we analyse the error of 
  nested Multilevel Monte Carlo estimators, which is of independent interest. Furthermore, we work with state dependent functionals, unlike scalar outputs which are common in literature on MLMC. The error analysis is carried out in uniform, and what seems to be new, weighted norms.
\end{abstract}

%%%%%%%%%%%%%%%%%%%%%%%%%%%%%%%%%
{\bf 2010 AMS subject classifications:} 
Primary: 
65C30% Stochastic differential and integral equations
, 60H35%Computational methods for stochastic equations
; secondary: 
60H30%Applications of stochastic analysis (to PDE, etc.)
.\\
%%%%%%%%%%%%%%%%%%%%%%%%%%%%%%%%%

%%%%%%%%%%%%%%%%%%%%%%%%%%%%%%%%%
{\bf Keywords :}
Mckean-Vlasov SDEs, Stochastic Interacting Particle Systems, Non-linear Fokker-Planck equations, Probabilistic Numerical Analysis
%%%%%%%%%%%%%%%%%%%%%%%%%%%%%%%%%

%\tableofcontents

\section{Introduction}

%
%Probabilistic tools allow
%us to represent solutions of PDEs as expectations of appropriate functionals
%of a Markov process. This allows us to show existence of solutions to some
%classes of linear and nonlinear PDEs and also to study their analytic
%properties. The key idea is that the solution to the PDE can be interpreted
%as the equation for the time marginal law of a Markov process. Then, instead
%of working at the level of the PDE, we can consider the associated Markov
%process.

The theory of mean field interacting particle systems was pioneered by the work of H.~McKean \cite{mckean1966class}, where he gave a probabilistic interpretation of a class of nonlinear (due to the dependence on the coefficients of the solution itself) nonlocal PDEs. Probabilistic representation has an advantage, as it paves a way to Monte-Carlo approximation methods which are efficient 
in high dimensions.  
Fix $T>0$. Let $\{W_t\}_{t\in [0,T]}$ be an $r$-dimensional Brownian motion on a filtered probability space $(\Omega, \{ \cF_t \}_{t}, \mathcal{F}, \bP)$. Consider continuous functions $b: \mathbb{R}^{d} \times \mathbb{R}^{d}  \rightarrow \mathbb{R}^d$, $\sigma: \mathbb{R}^{d} \times \mathbb{R}^{d} \rightarrow \mathbb{R}^{d \otimes r}$ and their corresponding non-linear (in the sense of McKean) stochastic differential equation (McKV-SDE)
given by
\begin{equation}
\label{eq:general1}
\left\{
\begin{array} {l l}
	dX_t & = 	 b[X_t, \mu^X_t ] \,   dt+   \sigma[X_t, \mu^X_t ]  \, dW_t,  \\
	\mu^X_t &=  \text{Law}(X_t), \quad t\in[0,T],
\end{array}  \right.
\end{equation}
where $X_0 \sim \mu_0 \in \cP_2( \bR^d)$ and $G[x, m] := \int_{\bR^d} G(x,y)\, m(dy)$, for any $x \in \bR^d$ and $m \in \cP_2( \bR^d)$ (square-integrable laws on $\bR^d$). Notice that $\{X_t\}_{t\in [0,T]}$ is not necessarily a Markov process and hence it is not immediate what the corresponding backward Kolmogorov equation looks like. Nonetheless using It\^{o}'s formula with $P \in C^2_b(\bR^d)$,  one can derive corresponding nonlinear Kolmogorov-Fokker-Planck equation 
\begin{equation} \label{eq:pde}
   \partial_t \langle \mu_t, P \rangle   = 
   \langle \mu_t, \frac{1}{2}  \sum_{i,j=1}^d  \partial^2_{x_i, x_j}  P (\cdot)   \big( \sigma \sigma^T \big)_{ij}[\cdot, \mu_t] + \sum_{i=1}^d \partial_{x_i} P(\cdot)  b_i[\cdot, \mu_t ] \rangle, 
\end{equation} 
where $\langle m,F\rangle := \int_{\bR^d} F(y) \, m(dy)$, \cite{antonelli2002rate,chassagneuxprobabilistic,sznitman1991topics}. The theory of propagation of chaos, \cite{sznitman1991topics}, states that \eqref{eq:general1} arises as a limiting equation  of the system of interacting diffusions $\{ Y^{i,N}_t \}_{i=1, \ldots, N}$ on $(\bR^d)^N$ given by  
\begin{equation}
\label{eq:par} 
\left\{
  \begin{array}{l l}
d Y^{i,N}_t &=   b[Y^{i,N}_t,\mu^{Y,N}_t]   dt +  \sigma[Y^{i,N}_t,\mu^{Y,N}_t]  dW^i_t, 
\\
\mu^{Y,N}_t &:= \frac{1}{N} \sum_{i=1}^{N} \delta_{Y_t^{i,N}}, \quad t\geq0,
  \end{array} \right.	
\end{equation}
where $\{Y^{i,N}_0\}_{i=1,\ldots,N}$ are i.i.d samples with law $\mu_0$ and $\{W^i_t\}_{i=1,\ldots,N}$ are independent Brownian motions. It can be shown, under sufficient regularity conditions on the coefficients, that   $\mu^{Y,N}\in\cP_2(C([0,T],\bR^d))$  converges in law to $\mu^X$,  see \cite{meleard1996asymptotic}. This is a not trivial result as the particles are not independent. Moreover, \eqref{eq:par} can be interpreted as a first step towards numerical schemes for \eqref{eq:general1}.
To obtain a fully implementable algorithm  one needs to study time discretisation of \eqref{eq:general1}. As in seminal papers by Bossy and Talay \cite{bossy1996convergence,bossy1997stochastic} we work with an Euler scheme. Take partition $\{t_k\}_k$ of $[0,T]$, with $t_k-t_{k-1}=h$ and define $\eta{(t)}\defeq t_k\, \text{if}~t\in[t_k,t_{k+1})$. The continuous Euler scheme reads 
\begin{align} \label{eq:sEuler2}
\overline{Y}^{i,N}_{t}  &= \overline{Y}^{i,N}_{t_{k}} +  b[\overline{Y}^{i,N}_{\eta{(t)}},\mu^{\overline Y,N}_{\eta{(t)}} ] (t-t_k)
+ \sigma[\overline{Y}^{i,N}_{\eta{(t)}},\mu^{\overline Y,N}_{\eta{(t)}} ]  (  W^i_{t} - W^i_{{t_{k}}})\,. 
\end{align} 
Note that due to interactions between discretised diffusions, implementation  of \eqref{eq:sEuler2} requires  $N^2$ arithmetic operations at each step  $t_k$ of the scheme. This makes simulations of \eqref{eq:sEuler2} very costly, but should not come as a surprise as the aim is to approximate non linear/non local PDEs \eqref{eq:pde} for which the deterministic schemes based on space discretisation, typically, are also computationally very demanding \cite{bossy1997comparison}.   
It has been  proven that the empirical distribution function of $N$ particles \eqref{eq:sEuler2} converges, in a weak sense, to the distribution of the corresponding McKean-Vlasov limiting equation 
with the rate $O((\sqrt{N})^{-1} + {h})$, see \cite{antonelli2002rate, bossy2004optimal,bossy2002rate,bossy1997stochastic}. Hence the computational cost of achieving a mean-square-error (see Theorem \ref{lm:mseboundfinalinteractparticle} for the definition) of order $\epsilon^2>0$ using this direct approach is $\cO(\epsilon^{-5})$.

The lack of independence among interacting diffusions and the fact that the statistical error coming from approximating a measure creates a bias in the approximation, render applications of variance reduction techniques non-trivial. In fact, we are not aware of any rigorous work on variance reduction techniques for McKV-SDEs. In this article, we develop an iterated particle system  that allows decomposing the statistical error and bias. We also provide an error analysis for a general class of McKV-SDEs. Finally, we deploy the MLMC method of Giles-Heinrich \cite{giles2008multilevel,heinrich2001multilevel} (see also 2-level MC of Kebaier \cite{kebaier2005statistical}). In Section \ref{sec issues}, we show that a direct application of MLMC to \eqref{eq:par} fails.  It is worth  pointing  out that the idea of combining an iterative method with MLMC to solve non-linear PDEs has very recently been proposed in 
\cite{hutzenthaler2016full}. However, their interest is on BSDEs and their connections to semi-linear PDEs. 

The key technical part of the paper is weak convergence analysis of the time discretisation that allows for iteration of the error in a suitable norms. It is well know, at least since the work \cite{talay1990expansion} that weak error analysis relies on the corresponding PDE theory. However as we already stated the solution to \eqref{eq:general1} is not Markovian on $\bR^d$. To overcome we work with forward backward system
\begin{equation*}
\left\{
\begin{array} {l l}
		X^{0,X_0}_t  &= \xi + 	\int_0^t  b[X^{s,\xi}_s,\mu^{X^{0,\xi}}_s]   \,   ds +  \int_0^t  \sigma[X^{0,X_0}_s,\mu^{X^{0,X_0}}_s] \, dW_s,    \\
	\mu^{X^{0,X_0}}_t &=  \text{Law}(X^{0,X_0}_t), 
\end{array}  \right. 
\end{equation*}
and note that $X^{0,X_0}_t \neq X^{0,x}_t \big|_{x= X_0}  $ in general (see \cite{buckdahn2017mean}). This makes building of standard PDE theory on $[0,T]\times \bR^d$ problematic and lead to theory of PDEs on measure spaces proposed by P. Lions in his lectures in Coll\`{e}ge de France (\cite{lions2014cours}) and further developed in \cite{buckdahn2017mean,chassagneuxprobabilistic}. Here we work with
\begin{align}\label{eq:rewritextinteract}
 \cX_t^{0,x} = x + \int_0^t  b[\cX_s^{0,x}, \mu_s^{X^{0,\xi}}] \,d s +  \int_0^t  \sigma[\cX_s^{0,x}, \mu_s^{X^{0,\xi}}] dW_s.
\end{align}
Notice that \eqref{eq:rewritextinteract}, unlike \eqref{eq:general1}, is a Markov process. Furthermore, if \eqref{eq:general1} has a unique (weak) solution, then  $\cX_t^{0,x}|_{x=X_0} = X_t^{0,X_0}$. This means that
\[
\int_{\R^d}\E \big[ P(\cX^{0,x}_t) \big] \, \mu_0(dx)  =  \E \big[ \E[P(X_t) | X_0  ] \big]. 
\]
It can be shown that $v(0,x)=\E \big[ P(\cX^{0,x}_t) \big]$ is a solution to backward Kolmogorov equation on $[0,T]\times \bR^d$ which we will explore in this paper.

\subsection{Iterated particle method}
The main idea is to approximate \eqref{eq:general1} with a sequence of classical SDEs defined as 
\begin{gather}
\label{eq:defxtminteract}
dX^m_t  =b[X^m_t,\mu_{t}^{X^{m-1}}] dt + \sigma[X^m_t,\mu_{t}^{X^{m-1}}] dW^m_t, \quad \mu^{X^m}_0 = \mu^{X}_0, 
\end{gather}
where $(W^m, X^m_0)$ are independent for all $m \in \bN$ as well as $(W^m,X^m_0)$ and  $(W^n,X^n_0)$  $m\neq n \in \bN$, are independent. The conditional independence across iterations is the key difference of our approach from the proof of existence of solutions by Sznitman \cite{sznitman1991topics}, where the same Brownian motion and initial condition are used at every iteration.  
  The Euler scheme with $\mu^{\overline{X}^{m}}_0 = \mu^X_0$ reads 
 \begin{gather}\label{eq:xminteract}
	d\overline{X}^{m}_t = b[\overline{X}^{m}_{\eta(t)}, \mu_{\eta(t)}^{\overline{X}^{m-1}}] dt 
	+ \sigma[\overline{X}^{m}_{\eta(t)}, \mu_{\eta(t)}^{\overline{X}^{m-1}}] dW^{m}_t\,.
\end{gather}
To implement \eqref{eq:xminteract} at every step of the scheme, one needs to compute the integral with respect to the measure from the previous iteration $m-1$. This integral is calculated by approximating measure $\mu_{\eta (t)}^{\overline{X}^{m-1}}$ by the empirical measure with $N_{m-1}$ samples.
%$\mu_{\eta (t)}^{\overline{Y}^{m-1},N_{m-1}}:= \frac{1}{N_{m-1}} \sum_{i=1}^{N_{m-1}} \delta_{{\overline{Y}}^{i,m-1}_{\eta (t)}}$.  
Consequently, we take $\mu^{{\overline{Y}}^{i,m}}_0 = \mu^X_0$ and define, for $m \in \bN$ and $1 \leq i \leq N_{m}$,
\begin{equation}\label{eq:xminteract2}
	d{\overline{Y}}^{i,m}_t =  b[{\overline{Y}}^{i,m}_{\eta(t)},\mu_{\eta (t)}^{\overline{Y}^{m-1},N_{m-1}}] dt 
	+ \sigma [{\overline{Y}}^{i,m}_{\eta(t)},\mu_{\eta (t)}^{\overline{Y}^{m-1},N_{m-1}}] dW^{i,m}_t, 	
\end{equation}
and call it an \emph{iterative particle system}. As above, we require that $W^{i,m},$ $1 \leq i \leq N_m,$ $m \in \bN$, and ${\overline{Y}}^{i,m}_0,$ $1 \leq i \leq N_m,$ $m \in \bN$, are independent. By this construction, the particles $({\overline{Y}}^{i,m}_t)_{i}$ are independent upon conditioning on $\sigma \Big( { {\{ {\overline{Y}}^{i,m-1}_t \}_{
1 \leq i \leq N_{m-1} }}}: t\in[0,T] \Big)$. 
The error analysis of \eqref{eq:xminteract2} is presented in Theorem \eqref{lm:mseboundfinalinteractparticle} and \eqref{thm:c2nonintmc}. From there one can deduce that optimal computational cost is achieved when $\{N_m\}_m$ is increasing and the computational complexity of computing expectations with \eqref{eq:xminteract2} is of the same order as the original particle system, i.e. $\eps^{-5}$. 
 
\subsection{Main result of the iterative MLMC algorithm} 
To reduce the computational cost, we combine the MLMC method with Picard iteration \eqref{eq:defxtminteract}. Fix $m$ and $L$.  Let  $\Pi^\ell=\{0=t_{0}^{\ell},\ldots,t^{\ell}_k,\ldots, T=t_{2^{\ell}}^\ell \} $, $\ell=0,\ldots,L$, be a family of time grids such that $t^{\ell}_k - t^{\ell}_{k-1}=h_{\ell} =  T2^{-\ell}$. To simulate \eqref{eq:xminteract} at Picard step $m$ and for all discretisation levels $\ell$ we need to have an approximation of the relevant expectations with respect to the law of the process at the previous Picard step $m-1$ and the time grid $\Pi^{L}$, i.e.   
\[
\begin{split}
	 &\left( \bE[b(x,\overline {X}^{m-1}_0)],\ldots, \bE[b(x,\overline{X}^{m-1}_{t_k^{L}})],\ldots,  \bE[b(x,\overline{X}^{m-1}_{T})]\right)\,,\\
	 &\left( \bE[\sigma(x,\overline {X}^{m-1}_0)],\ldots, \bE[\sigma(x,\overline{X}^{m-1}_{t_k^{L}})],\ldots,  \bE[\sigma(x,\overline{X}^{m-1}_{T})]\right)\,.
\end{split}
\]  
By approximating these expectations with the MLMC (signed) measure $\mathcal{M}^{(m-1)}$ (see Section \ref{sec MLMC} for its exact definition), we arrive at the \textit{iterative MLMC particle} method defined as
\begin{equation} \label{eq:contiousYinteract}
 dY^{i,m,\ell}_t = \langle \mathcal{M}^{(m-1)}_{\eta_{\ell} (t)} ,  b( Y^{i,m,\ell}_{\eta_{\ell} (t)}, \cdot)  \rangle \, dt + \langle \mathcal{M}^{(m-1)}_{\eta_{\ell} (t)} ,  \sigma( Y^{i,m,\ell}_{\eta_{\ell} (t)}, \cdot) \rangle \, dW^{i,m}_t\,, % \quad \mu^{Y^{i,m,\ell}}_0 = \mu^X_0.
\end{equation}
where $Y^{i,0, \ell}= X_0$. Under the assumptions listed in Section \ref{sec MLMCI}, the main result of this paper gives precise error bounds for \eqref{eq:contiousYinteract}.
\begin{theorem}\label{lm:mseboundfinalinteract}
Assume  \hkreg \, and \hLaw .  Fix $M>0$ and let $P \in C^2_b (\bR^d)$.  Define
$
MSE_t^{(M)}(P) \defeq \bE[( \langle \mathcal{M}^{(M)}_{t}, P \rangle - \bE[P(X_{t})] )^2].
$
Then there exists a constant $c>0$ (independent of the choices of $M$, $L$ and $\{N_{m,\ell}\}_{m,\ell}$) such that for every $t \in [0,T]$,
\[
	MSE_{\eta_L(t)}^{(M)} (P)\leq c\bigg\{h_L^2+\sum_{m=1}^{M}\frac{c^{M-m}}{(M-m)!}\cdot\sum_{\ell=0}^L\frac{h_\ell}{N_{m,\ell}} +\frac{c^{M-1}}{M!}\bigg\}.
\]
\end{theorem}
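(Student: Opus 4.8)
The plan is to prove the bound by setting up a recursion in the Picard index $m$ and then solving it explicitly; the factorial weights $\tfrac{c^{M-m}}{(M-m)!}$ are the signature of iterated time-integration of a Gronwall-type inequality. Write $\mathcal{E}_m(t):=\langle \mathcal{M}^{(m)}_t,P\rangle-\bE[P(X_t)]$, so that $MSE^{(m)}_t(P)=\bE[\mathcal{E}_m(t)^2]$, and let $\cF^{(m-1)}$ be the $\sigma$-algebra generated by the step-$(m-1)$ particles (equivalently, the one carrying the random measure $\mathcal{M}^{(m-1)}$ that drives \eqref{eq:contiousYinteract}). First I would perform a conditional bias--variance split, $\mathcal{E}_m=A_m+B_m$ with
\begin{equation*}
A_m:=\langle \mathcal{M}^{(m)}_t,P\rangle-\bE[\langle \mathcal{M}^{(m)}_t,P\rangle\mid\cF^{(m-1)}],\qquad B_m:=\bE[\langle \mathcal{M}^{(m)}_t,P\rangle\mid\cF^{(m-1)}]-\bE[P(X_t)].
\end{equation*}
Since $\bE[A_m\mid\cF^{(m-1)}]=0$ and $B_m$ is $\cF^{(m-1)}$-measurable, the cross term vanishes and $\bE[\mathcal{E}_m^2]=\bE[A_m^2]+\bE[B_m^2]$. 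The variance piece $\bE[A_m^2]=\bE[\Var(\langle \mathcal{M}^{(m)}_t,P\rangle\mid\cF^{(m-1)})]$ is the fresh statistical error at step $m$: conditionally on $\cF^{(m-1)}$ the particles $\{Y^{i,m,\ell}\}_i$ are independent by the conditional independence property built into the construction, so the MLMC telescoping together with the strong $O(h_\ell)$ coupling of consecutive levels (and moment bounds from \hLaw) gives $\bE[A_m^2]\le c\sum_{\ell=0}^{L}h_\ell/N_{m,\ell}$.

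For the conditional bias $B_m$ I would use that, conditionally on $\cF^{(m-1)}$, the MLMC measure is unbiased for the level-$L$ Euler scheme driven by $\mathcal{M}^{(m-1)}$. Inserting the continuous (exact-in-time) Picard iterate $X^m$ of \eqref{eq:defxtminteract} driven by $\mathcal{M}^{(m-1)}$, I split
\begin{equation*}
B_m=\Big(\bE[P(\overline X^{m}_t)\mid\cF^{(m-1)}]-\bE[P(X^{m}_t)\mid\cF^{(m-1)}]\Big)+\Big(\bE[P(X^{m}_t)\mid\cF^{(m-1)}]-\bE[P(X_t)]\Big).
\end{equation*}
The first bracket is the weak Euler error on the grid $\Pi^L$; by the weak-rate analysis underlying Theorems \ref{lm:mseboundfinalinteractparticle}--\ref{thm:c2nonintmc} it is $O(h_L)$, contributing $O(h_L^2)$ after squaring. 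The second bracket compares running the flow with the approximate law $\mathcal{M}^{(m-1)}$ against running it with the true law $\mu^X$; writing the coefficient discrepancies as $\langle \mathcal{M}^{(m-1)}_s,b(x,\cdot)\rangle-\bE[b(x,X_s)]$ and $\langle \mathcal{M}^{(m-1)}_s,\sigma(x,\cdot)\rangle-\bE[\sigma(x,X_s)]$, applying the Lipschitz regularity \hkreg and a Gronwall estimate bounds it by $c\int_0^t$ of these discrepancies, whose squares in expectation are exactly step-$(m-1)$ mean-square errors for the state-dependent test functions $b(x,\cdot)$ and $\sigma(x,\cdot)$.

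Combining the two estimates yields the one-step recursion
\begin{equation*}
MSE^{(m)}_{\eta_L(t)}(P)\le c\Big(h_L^2+\sum_{\ell=0}^{L}\frac{h_\ell}{N_{m,\ell}}\Big)+c\int_0^t \mathcal{R}^{(m-1)}(s)\,ds,
\end{equation*}
where $\mathcal{R}^{(m-1)}(s)$ denotes the supremum of the analogous mean-square error over the class of test functions $\{P\}\cup\{b(x,\cdot),\sigma(x,\cdot):x\in\bR^d\}$. Because the same estimate holds with $P$ replaced by any member of this class, the recursion closes on $\mathcal{R}^{(m)}$. Iterating it $M$ times (i.e.\ solving the discrete Gronwall recursion by repeated integration in $t$) produces the weights $\tfrac{(ct)^{M-m}}{(M-m)!}$: the statistical terms $\sum_\ell h_\ell/N_{m,\ell}$ remain inside the weighted sum; the discretisation terms $h_L^2$ accumulate with $\sum_{m=1}^{M}\tfrac{(ct)^{M-m}}{(M-m)!}\le e^{cT}$ and hence collapse into the single $h_L^2$; and the base term $\mathcal{R}^{(0)}$, which is a finite constant since $Y^{i,0,\ell}=X_0$, is carried into the Picard-truncation term $\tfrac{c^{M-1}}{M!}$. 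Absorbing $T$ and the finitely many constants into $c$ and restricting to $P$ gives the claimed bound.

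The main obstacle will be closing the recursion in the correct norm. The propagated error at step $m$ is not governed by $MSE^{(m-1)}(P)$ for the single output $P$, but by the accuracy of $\mathcal{M}^{(m-1)}$ against the whole family $\{b(x,\cdot),\sigma(x,\cdot):x\in\bR^d\}$ that enters the coefficients of \eqref{eq:contiousYinteract}. The recursion therefore only closes if the error is measured uniformly over this class, and since the kernels may grow in $x$ this uniform control must be taken in the weighted norms advertised in the abstract; keeping the constant $c$ independent of $M,L$ and $\{N_{m,\ell}\}$ throughout the iteration (so that the weights genuinely decay like $1/(M-m)!$) is the delicate bookkeeping, and is exactly where \hkreg (governing the $x$-dependence and growth of the kernels) and \hLaw (supplying the moment bounds that dominate the weights) are needed.
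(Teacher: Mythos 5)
Your proposal follows essentially the same route as the paper: the conditional bias--variance split with respect to the $\sigma$-algebra of the previous Picard step, the conditional MLMC variance bound $c\sum_\ell h_\ell/N_{m,\ell}$ via conditional independence and strong level-coupling (the paper's Lemma \ref{lm:varianceinterpvsnoninterpinteract}), the weak-error-plus-measure-propagation bound on the conditional bias (Theorem \ref{lm:conditionalwknonintmaininteract} and Lemma \ref{lm:weakerror1nonintinteract}), and the closure of the recursion over the family $\{b(x,\cdot),\sigma(x,\cdot)\}$ in a weighted (in $x$) norm --- the paper's weight being the deterministic law $\mu^{\bar Z^L}$ of an auxiliary Euler scheme driven by the true law, introduced precisely so the recursion closes on deterministic quantities --- followed by iterated integration producing the factorial weights. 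The argument and its delicate points (the weighted norm, the constant independent of $M,L,\{N_{m,\ell}\}$) are correctly identified, so the proposal is correct and matches the paper's proof in substance.
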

The proof can be found in Section \ref{sec mseboundfinalinteract}. The first term in the above error comes from the analysis of weak convergence for the Euler scheme. The second contains the usual MLMC variance and shows that computational effort should be increasing with with iteration $m$ (rather than equally distributed across iterations). Finally the last term is an extra error due to iterations. Using this result, we prove in Theorem \ref{thm:c2nonint} that the overall complexity of the algorithm is of order $\eps^{-4}|\log\eps|^{3}$ (i.e. one order of magnitude better than the direct approach). 
We remark that the MLMC measure acts on functionals that depend on spatial variables. We work with uniform norms as in  \cite{heinrich2001multilevel,giles2015multilevel}, but also introduce suitable weighted norms, which seems new in MLMC literature.

We remark that, the analysis of stochastic particles systems is of independent interest, as it is used as models in molecular dynamics; physical particles in fluid dynamics \cite{pope2000turbulent}; behaviour of interacting agents in economics or social networks \cite{carmona2013control} or interacting neurons in biology \cite{delarue2015particle}. It is also used in  modelling networks of neurons (see \cite{delarue2015global}) and
modelling altruism (see \cite{hutzenthaler2016full}).

\subsection{Convention of notations}
 We use $\norm{A}$ to denote the Hilbert-Schmidt norm while $|{\bf v}|$ is used to denote the Euclidean norm.  For any stochastic process $R= \{R_t \}_{t \in I}$, the law of $R_t$ at any time point $t \in I$ is denoted by $\mu^R_t$. 
$\cP_2( E)$ denotes the set of square-integrable probability measures on any Polish space $E$. On the other hand, $\cP^{s}_2( E)$ denotes, on any Polish space $E$, the set of random signed measures that are square-integrable almost surely. 
 
 Moreover, we denote by $C^{0,2}_{b,p}(\bR^m \times \bR^n, \bR)$ the set of functions $P$ from $\bR^m \times \bR^n $ to $\bR$ that are continuously twice-differentiable in the second argument, for which there exists a constant $L$ such that for each $x \in \bR^m$, $y \in \bR^n$, $i,j \in \{1, \ldots, n\}$,
$$ |\partial_{y_i}P(x,y)| \leq L(1 + |y|^p), \quad \quad |\partial^2_{y_i,y_j}P(x,y)| \leq L(1 + |y|^p), $$
where $\partial_{y_i}$ and $\partial^2_{y_i,y_j}$ denote respectively the first and second order partial derivatives w.r.t. the second argument. Finally, we denote by $C^{p,q}_{b,b}(\bR^m \times \bR^n, \bR)$ the set of functions from $\bR^m \times \bR^n $ to $\bR$ that are continuously $p$ times differentiable in the first argument and continuously $q$ times differentiable in the second argument such that the partial derivatives (up to the respective orders, excluding the ``zeroth'' order derivative) are bounded.

%%%%%%%%%%
\section{The iterative MLMC algorithm} \label{sec MLMCI}
\subsection{Main assumptions on the McKean-Vlasov SDE} \label{sec:abstract}
%\subsection{Introduction}\label{sec:mlmcintro}
Here we state the assumptions needed for the analysis of equation \eqref{eq:general1}. 
\begin{assumption} \label{as 1}
 $  $
\begin{itemize}
\item[\hkreg] The kernels $b$ and $\sigma$  belong to the sets  $C^{2,1}_{b,b}(\R^d\times\R^{d}, \R^d) \cap C^{0,2}_{b,p}(\R^d\times\R^{d}, \R^d)$ and $C^{2,1}_{b,b}(\R^d\times\R^{d}, \R^{d\otimes r})  \cap C^{0,2}_{b,p}(\R^d\times\R^{d}, \R^{d\otimes r})$ respectively.
\item[\hLaw]  The initial law $\mu_0 := \mu^X_0$ satisfies the following condition:
 for any $p\geq 1$, $\mu_0\in L^p(\Omega;\R^d)$, i.e. $$\int_{\R^d}|x|^p\mu_0(dx)<\infty.$$
%\item[\hkb] Kernels $b: \mathbb{R}^d \times \mathbb{R}^{d} \rightarrow \mathbb{R}^d$ and $\sigma: \mathbb{R}^d \times \mathbb{R}^d \rightarrow \mathbb{R}^{d\otimes r}$  are bounded by a constant $K$, i.e.
%\begin{align*}
%\sup_{(x_1,y_1)\in\mathbb{R}^d \times \mathbb{R}^{d}}\| b(x_1,y_1) \|  + \sup_{(x_2,y_2)\in\mathbb{R}^d \times \mathbb{R}^{d}} \| \sigma(x_2,y_2) \|  & \leq K< \infty.
%\end{align*}
\end{itemize}
\end{assumption}
%Note that if \hkreg holds, then by the mean value theorem,
%\begin{itemize}
%%\item[\hLip] the kernels $b$ and $\sigma$ are globally Lipschitz,  i.e. 
%%for all $x_1,x_2, y_1, y_2\in\R^d$, there exists a constant $L$ such that
%%$$ | b(x_1,y_1) -b(x_2,y_2) |  +  \| \sigma(x_1,y_1) - \sigma(x_2,y_2) \|  \leq L ( |x_1-x_2| + |y_1-y_2|). $$
%\end{itemize}

Note that if \hkreg \,  holds, then 
\begin{itemize}
\item[\hLip] the kernels $b$ and $\sigma$ are globally Lipschitz,  i.e. 
for all $x_1,x_2, y_1, y_2\in\R^d$, there exists a constant $L$ such that
$$ | b(x_1,y_1) -b(x_2,y_2) |  +  \| \sigma(x_1,y_1) - \sigma(x_2,y_2) \|  \leq L ( |x_1-x_2| + |y_1-y_2|). $$
\end{itemize}
If \hkreg \, and \hLaw \, hold, then a weak solution to \eqref{eq:general1} exists and pathwise uniqueness holds (see \cite{sznitman1991topics}). In other words $\{ X_t \}_{t \geq 0}$  induces a unique probability measure on  $C([0,T],\R^d)$ . Furthermore it has a property that 
\begin{align}\label{eq:regularityofXinteract}
\sup_{0\leq t\leq T}\bE|X_t|^p <\infty.
\end{align}
The additional smoothness stipulated in \hkreg \, is needed in the analysis of weak approximation errors.  

\subsection{Direct application of MLMC to interacting diffusions}\label{sec issues}
There are  two issues pertaining to the direct application of  MLMC methodology to \eqref{eq:sEuler2}: 
i) the telescopic property needed for MLMC identity \cite{giles2008multilevel} does not hold in general; ii) a small number of simulations (particles) on fine time steps (a reason for the improved computational cost in MLMC setting) would lead to a poor approximation of the measure, leading to  a high bias.
 To show that telescopic sum does not hold in general, consider a collection of discretisations of $[0,T]$ with different resolutions. To this end, we fix $L\in \NN$. Then $Y^{i,\ell,N_{\ell}}_{T}$, $\ell=1,\ldots,L$, denotes for each $i$ a particle corresponding to \eqref{eq:sEuler2} with time-step $h_\ell$, where $N_{\ell}$ is the total number of particles. Let $P: \bR^d \to \bR$ be any Borel-measurable function.
With a direct application of MLMC in time for \eqref{eq:sEuler2}, we replace the standard Monte-Carlo estimator on the left-hand side by an MLMC estimator on the right-hand side as follows.
\begin{eqnarray}
    && \frac{1}{N_{L}} \sum_{i=1}^{N_{L}}    P( Y^{i, L, N_{L}}_t)  \nonumber \\
    & \approx & \frac{1}{N_0}  \sum_{i=1}^{N_{0}}  P( Y^{i, 0, N_{0}}_t) + \sum_{\ell=0}^{L} \frac{1}{N_{\ell}}  \sum_{i=1}^{N_{\ell}} \bigg[ P( Y^{i, \ell, N_{\ell}}_t)  -  P( Y^{i, \ell-1, N_{\ell}}_t)  \bigg]. \label{eq: naive application MLMC} 
\end{eqnarray} 
 However, we observe that such a direct application is not possible, since, in general,   
\begin{equation*}
\bE \bigg[  P( Y^{1, \ell, N_{\ell}}_t) \bigg] \neq \bE \bigg[ 
 P( Y^{1, \ell, N_{\ell+1}}_t) \bigg], 
\end{equation*}
which means that we do not have equality in expectation on both sides of \eqref{eq: naive application MLMC}. On the contrary, if we required the number of particles for all the levels to be the same, then the telescopic sum would hold, but clearly, there would be no computational gain from doing MLMC. We are aware of two articles that tackle the aforementioned issue. The case of linear coefficients is treated in \cite{ricketson2015multilevel}, in which particles from all levels are used to approximate the mean field at the final (most accurate) approximation level. It is not clear how this approach could be extended to general McKean-Vlasov equations. A numerical study of a ``multi-cloud" approach is presented in \cite{haji2016multilevel}. The algorithm resembles the MLMC approach to the nested simulation problem in \cite{ali2012pedestrian,giles2015multilevel,bujok2013multilevel,lemaire2017multilevel}. Their approach is very natural, but because particles within each cloud are not independent, one faces similar challenges as with the classical particle system.

\subsection{Construction of the iterative MLMC algorithm}\label{sec MLMC}

We approximate each of the expectations by the MLMC method, but only have access to samples at grid points ${\Pi}^{\ell}$ that correspond to $(Y^{i,m-1,\ell})_{i,\ell}$. Consequently, for $\ell<\ell'$, the empirical measure $ \frac{1}{N} \sum_{i=1}^N \delta_{Y^{i,m-1,\ell}_{t}}$ is only defined at every timepoint in $\Pi^{\ell}$,  but not $\Pi^{\ell'}$ and one cannot build MLMC telescopic sum across all discretisation levels. For that reason (as in original development of MLMC by Heinrich \cite{heinrich2001multilevel}), we introduce a linear-interpolated measure (in time) ${\widetilde{\mu}}^{Y^{m-1,{\ell}},N}_t$ given by
\begin{align} \label{eq:intmeasureintro}
 {\widetilde{\mu}}^{Y^{m-1,{\ell}},N}_t \defeq
\begin{cases}
 \frac{1}{N} \sum_{i=1}^N \delta_{Y^{i,m-1,\ell}_t}   &, t \in \Pi^{\ell}, \\
  {} & {} \\
	\bigg[ \frac{ t- \eta_{\ell} (t)}{h_{\ell}} \bigg]   {\widetilde{\mu}}^{Y^{m-1,{\ell}},N}_{\eta_{\ell} (t) + h_{\ell}}   + \bigg[ 1- \frac{ t- \eta_{\ell} (t)}{h_{\ell}} \bigg] {\widetilde{\mu}}^{Y^{m-1,{\ell}},N}_{\eta_{\ell} (t) }  & ,t \notin \Pi^{\ell}\,,
 \end{cases}
\end{align}
where $\eta_\ell{(t)}\defeq t_k^\ell,\quad\text{if}~t\in[t_k^{\ell},t_{k+1}^\ell)$.
For any continuous function $P:\bR^d\times\bR^d \rightarrow \bR$ and any $ x\in \bR^d$, we define the MLMC signed measure $\mathcal{M}^{(m-1)}_t$ by
\begin{gather} \label{eq:defMLMCop}
 \langle \mathcal{M}^{m-1}_t,  P(x,\cdot) \rangle := \ \langle \sum_{\ell=0}^L ( {\widetilde{\mu}}^{Y^{m-1,{\ell}},N_{m-1,\ell}}_t -{\widetilde{\mu}}^{Y^{m-1,{\ell-1}},N_{m-1,\ell}}_t  ) , P(x,\cdot) \rangle\,,
 \end{gather}
 where $ {\widetilde{\mu}}^{Y^{m-1,{-1}},N_{m,0}}_t :=0$. We interpret the MLMC operator in a componentwise sense.  We then define the particle system $\{ Y^{i,m,\ell} \}$ as in \eqref{eq:contiousYinteract}.  As usual for MLMC estimators, at each level $\ell$, we use the same Brownian motion to simulate particle systems  $(Y^{i,m,\ell},Y^{i,m,\ell-1})_{i}$ to ensure 
that the variance of the overall estimator is reduced. As for the iterative particle system, we require that $W^{i,m},$ $1 \leq i \leq N_{m, \ell},$ $m \in \bN$, and ${Y}^{i,m,\ell}_0,$ $1 \leq i \leq N_{m, \ell},$ $1 \leq \ell \leq L$, $m \in \bN$, are independent.

\section{Abstract framework for MLMC analysis}

To streamline the analysis of the iterated MLMC estimator, we introduce an abstract framework corresponding to one iteration. This simplifies the notation and also may be useful for future developments of MLMC algorithms. 

Let  $\overline{b}: \mathbb{R}^d \times \cP^s_2(\mathbb{R}^{d}) \rightarrow \mathbb{R}^d$ and $\overline{\sigma}: \mathbb{R}^d \times \cP^s_2(\mathbb{R}^{d}) \rightarrow \mathbb{R}^{d\otimes r}$ be measurable functions. Also, $\cV \in  \cP^s_2(C([0,T], \bR^d))$ is fixed (the precise conditions that we impose on $\overline{b}$, $\overline{\sigma}$ and
$\mathcal{V}$ will be presented in Section \ref{sec:aa}). 
We consider SDEs with \emph{random} coefficients of the form
\begin{equation} \label{eq:SDErandom}
	dU_t = \overline{b}(U_t, \cV_{t} )dt +\overline{\sigma}(U_t, \cV_{t} )dW_t, \quad \mu^U_0=\mu^X_0.
\end{equation}
The solution of this SDE is well-defined under the assumptions in Section \ref{sec:aa}, by \cite{krylov2002introduction}. For $\ell=1,\ldots,L$, the corresponding Euler approximation of \eqref{eq:SDErandom} at level $\ell$ is given by  
\begin{equation} \label{eq:defprocessZtinteract}
dZ^{\ell}_{t} =   \overline{b}(Z_{\eta_{\ell} (t)}^{\ell},\cV_{\eta_{\ell} (t)} ) dt + \overline{\sigma}(Z_{\eta_{\ell} (t)}^{\ell},\cV_{\eta_{\ell} (t)}) dW_t, \quad \mu^{Z^{\ell}}_0 = \mu^X_0.
\end{equation}
We require that $\cV$ does not depend on $\ell$  and that $(W_t)_{t \in [0,T]}$ is independent of $\cV$. Subsequently, we define a particle system $\{Z^{i, \ell}\}$ as follows,
\begin{equation} \label{eq:defprocessZtcoupling}
dZ^{i,\ell}_{t} =   \overline{b}(Z_{\eta_{\ell} (t)}^{i,\ell},\cV_{\eta_{\ell} (t)} ) dt + \overline{\sigma}(Z_{\eta_{\ell}(t)}^{i,\ell},\cV_{\eta_{\ell} (t)}) dW^{i}_t, \quad \quad \mu^{Z^{i,\ell}}_0 = \mu^X_0\,.
\end{equation}

\subsection{Analysis of the abstract framework} \label{sec:aa}

Using the notation defined in the previous section, we formulate the conditions needed to study the convergence of the iterated particle system. Recall that $\cV \in  \cP^s_2(C([0,T], \bR^d))$ is given and we consider equations \eqref{eq:defprocessZtinteract} and \eqref{eq:defprocessZtcoupling}. We assume the following.

\begin{assumption}
\hfill
\begin{itemize}

\item[\hvb] The random measure $\cV$ is independent of $W^{i}$ and $Z^{i, \ell}_0$. For each $p \geq 1$, 
\[ \quad \quad 
	 \sup_{0\leq s\leq T}\bE \bigg| \int_\Rd |y|^p\cV_{s}(dy)  \bigg| < \infty.
\]

%\stodo{Version2: 
%	\[
%	 \bE[\norm{\BV}^p]\leq c <\infty\quad\forall p\geq 1.
%\]
%}
\item[\hvreg] There exists a constant $c$ such that
\[
    	\sup_{x\in\R^d}\sup_{0\leq s\leq t\leq T}\bE \big[ \big| \overline{b}(x,\cV_t)-\overline{b}(x,\cV_s)\big|^2 + \big\| \overline{\sigma}(x,\cV_t)-\overline{\sigma}(x,\cV_s) \big\|^2 \big]\leq c (t-s).
\]

%\stodo{Version2: For $\ell\in\{0,1,\ldots,L\}$,
%\[
%	\sup_{x\in\R^d}\sup_{0\leq s\leq T}\bE[|\overline{b}(x,\BV_{1,\eta_\ell(s+h_\ell)}) -\overline{b}(x,\BV_{1,\eta_\ell(s)}) |^2]+\bE[\norm{\overline{\sigma}(x,\BV_{2,\eta_\ell(s+h_\ell)}) -\overline{\sigma}(x,\BV_{2,\eta_\ell(s)}) }^2]\leq c h_{\ell}
%\]	
\item[\hvLip] There exists a constant $c$ such that for each $t\in[0,T]$ and $x,y\in\R^d$, 
\begin{gather}
	|\overline{b}( x, \cV_t) - \overline{b}(y,  \cV_t)| +\norm{\overline{\sigma}( x, \cV_t ) - \overline{\sigma}(y, \cV_t )}\leq c|x-y|\,
\end{gather}
\begin{gather}
	|\overline{b}(x, \cV_t)| + \norm{\overline{\sigma}(x, \cV_t)} \leq c \bigg( 1+|x|+ \bigg| \int_\Rd|y| \, \cV_t(dy) \bigg| \bigg)\,.
\end{gather}

\end{itemize}
\end{assumption}
%\begin{remark}
%Taking supremum over $x$ in \hvreg does not require boundedness of $G$ over $x$, since the Lipschitz property of $G$ implies that the constant $c$ in \hvreg does not depend on $x$.
%\end{remark}

\paragraph{Analysis of conditional MLMC variance}
For the rest of this section, we denote by $c$  a generic constant that depends on $T$, but not on $\ell$ or $N_{\ell}$.  We first consider the integrability of process \eqref{eq:defprocessZtinteract}.

\begin{lemma}\label{lm:intergrabilitySDErandomcoeffinteract}
Let $Z^\ell$ be defined as in \eqref{eq:defprocessZtinteract}. Assume \hvLip \, and \hLaw. Then for any $p\geq 2$ and $\ell\geq 0$, there exists a constant $c$ such that 
\[
	\bE \bigg[\sup_{t\in[0,T]}|Z_t^\ell|^p \bigg]\leq c\ \bigg(1+\bE \bigg[\int_0^T \bigg| \int_\Rd|y|^p\cV_{\eta_{\ell}(s)}(dy)  \bigg| \, ds \bigg] \bigg).
\]
\end{lemma}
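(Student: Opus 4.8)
The plan is to run the classical moment estimate for an Euler scheme, with the twist that the coefficients depend on the random (signed) measure $\cV$. First I would write \eqref{eq:defprocessZtinteract} in integral form,
\[
Z_t^\ell = Z_0 + \int_0^t \overline b(Z_{\eta_\ell(s)}^\ell, \cV_{\eta_\ell(s)})\,ds + \int_0^t \overline\sigma(Z_{\eta_\ell(s)}^\ell, \cV_{\eta_\ell(s)})\,dW_s,
\]
fix a horizon $\tau\in[0,T]$, and apply the elementary inequality $(a+b+c)^p\le 3^{p-1}(a^p+b^p+c^p)$ to $\sup_{t\le\tau}|Z_t^\ell|^p$. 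The drift contribution is handled by H\"older's inequality in time, $\big|\int_0^t\overline b\,ds\big|^p\le T^{p-1}\int_0^\tau|\overline b|^p\,ds$, and the diffusion contribution by the Burkholder--Davis--Gundy inequality followed by H\"older (valid since $p\ge 2$), giving $\bE\big[\sup_{t\le\tau}\big|\int_0^t\overline\sigma\,dW_s\big|^p\big]\le c_p\,T^{p/2-1}\,\bE\int_0^\tau\|\overline\sigma\|^p\,ds$. Here the standing assumption that $W$ is independent of $\cV$ is what guarantees that, in the filtration generated by $\cV$ (viewed as an $\cF_0$-datum) and $W$, the integrand is adapted and $W$ remains a Brownian motion, so that BDG applies to a genuine local martingale.

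Next I would invoke the linear growth bound in \hvLip to dominate both $|\overline b(Z_{\eta_\ell(s)}^\ell,\cV_{\eta_\ell(s)})|^p$ and $\|\overline\sigma(Z_{\eta_\ell(s)}^\ell,\cV_{\eta_\ell(s)})\|^p$ by $c\big(1+|Z_{\eta_\ell(s)}^\ell|^p+\big|\int_\Rd|y|\,\cV_{\eta_\ell(s)}(dy)\big|^p\big)$. Two reductions are then needed: the pointwise bound $|Z_{\eta_\ell(s)}^\ell|^p\le\sup_{u\le s}|Z_u^\ell|^p$, which feeds the Gronwall mechanism, and a comparison of the first moment of $\cV$ with its $p$-th moment so that the measure term lands on the quantity appearing in the statement, $\bE\int_0^T\big|\int_\Rd|y|^p\cV_{\eta_\ell(s)}(dy)\big|\,ds$. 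The latter is a Jensen-type step; the care needed is that $\cV$ is a \emph{signed} measure, so one passes through its total variation (or argues on the positive/negative parts) rather than applying Jensen directly. Combining these with $\bE|Z_0|^p<\infty$ from \hLaw produces an inequality of the form
\[
\bE\Big[\sup_{t\le\tau}|Z_t^\ell|^p\Big]\le c\Big(1+\bE\int_0^T\Big|\int_\Rd|y|^p\cV_{\eta_\ell(s)}(dy)\Big|\,ds\Big)+c\int_0^\tau\bE\Big[\sup_{u\le s}|Z_u^\ell|^p\Big]\,ds.
\]

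Finally I would close the estimate with Gronwall's lemma applied to the map $\tau\mapsto\bE[\sup_{t\le\tau}|Z_t^\ell|^p]$. The main obstacle is that this last step is only legitimate once the left-hand side is known to be finite a priori, which is not automatic because of the stochastic integral. I expect to resolve this in the standard way by localizing with the stopping times $\tau_n=\inf\{t:|Z_t^\ell|\ge n\}\wedge T$: all estimates above hold with $\tau$ replaced by $\tau_n$, the truncated supremum is bounded so Gronwall yields a bound uniform in $n$, and monotone convergence as $n\to\infty$ removes the localization. A secondary point to get right is precisely the signed-measure moment comparison noted above, since an over-eager Jensen would land on $\int|y|^p|\cV|(dy)$ rather than the $\big|\int|y|^p\cV(dy)\big|$ demanded by the statement; one must track which quantity is required and, where a discrepancy remains, absorb it into the generic constant $c$ using the moment control available in the framework.
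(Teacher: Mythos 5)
Your proposal follows essentially the same route as the paper's proof: integral form, H\"older for the drift, BDG plus H\"older for the stochastic integral, the linear growth bound from \textbf{($\cV$-Lip)}, localization via the stopping times $\tau_n$, Gronwall, and monotone convergence to remove the truncation. The only difference is that you explicitly flag the Jensen-type comparison between $\big|\int|y|\,\cV(dy)\big|^p$ and $\big|\int|y|^p\cV(dy)\big|$ for the signed measure $\cV$, a step the paper's proof passes over silently; your awareness of it is a point in your favour rather than a gap.
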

The proof is elementary and can be found in the Appendix \ref{sec intergrabilitySDErandomcoeffinteract}. The following two lemmas focus on the regularity of $Z^{\ell}_t$ in time and its strong convergence property.  The first lemma bounds the difference in $Z^{\ell}_t$ over two time points, at a fixed level $\ell$. The second lemma bounds the difference in $Z^{\ell}_t$ over adjacent levels, at a fixed time $t$.  Their proofs follow from standard estimates in the theory of SDE and are therefore omitted. 

\begin{lemma}[Regularity  of $Z^{\ell}_t$]\label{lm:regularityYinteract}
Let $Z^\ell$ be defined as in \eqref{eq:defprocessZtinteract}. Assume \hvLip $\,$ and \hvb. Then, for $p\geq 1$, $0 \leq u \leq s \leq T$, 
\[
	\bigg(\bE[|Z_s^{\ell}-Z_{u}^{\ell}|^p]\bigg)^{\frac{1}{p}}\leq c (s-u)^{\frac{1}{2}}.
\]
\end{lemma}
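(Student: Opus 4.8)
The plan is to work directly from the integral form of the Euler scheme. Writing
\[
Z_s^{\ell} - Z_u^{\ell} = \int_u^s \overline{b}(Z_{\eta_\ell(r)}^{\ell}, \cV_{\eta_\ell(r)}) \, dr + \int_u^s \overline{\sigma}(Z_{\eta_\ell(r)}^{\ell}, \cV_{\eta_\ell(r)}) \, dW_r,
\]
I would first establish the estimate for $p \geq 2$ and then recover the range $1 \leq p < 2$ by Jensen's inequality, using that $r \mapsto \big(\bE[|Z_s^\ell - Z_u^\ell|^r]\big)^{1/r}$ is nondecreasing in $r$. For $p \geq 2$, split the increment into its drift and diffusion parts and bound each in $L^p$ separately via the triangle inequality.

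For the drift term, apply Hölder's inequality in time:
\[
\bE \bigg[ \bigg| \int_u^s \overline{b}(Z_{\eta_\ell(r)}^{\ell}, \cV_{\eta_\ell(r)}) \, dr \bigg|^p \bigg] \leq (s-u)^{p-1} \int_u^s \bE \big[ |\overline{b}(Z_{\eta_\ell(r)}^{\ell}, \cV_{\eta_\ell(r)})|^p \big] \, dr.
\]
For the diffusion term, apply the Burkholder–Davis–Gundy inequality followed once more by Hölder to obtain
\[
\bE \bigg[ \bigg| \int_u^s \overline{\sigma}(Z_{\eta_\ell(r)}^{\ell}, \cV_{\eta_\ell(r)}) \, dW_r \bigg|^p \bigg] \leq c\, (s-u)^{\frac{p}{2}-1} \int_u^s \bE \big[ \|\overline{\sigma}(Z_{\eta_\ell(r)}^{\ell}, \cV_{\eta_\ell(r)})\|^p \big] \, dr.
\]
The linear growth bound in \hvLip then controls both integrands by
\[
c\Big(1 + |Z_{\eta_\ell(r)}^{\ell}|^p + \big| \int_\Rd |y| \, \cV_{\eta_\ell(r)}(dy) \big|^p \Big).
\]

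It remains to check that the time-integrals of these expectations are finite uniformly in $\ell$. The term $\bE[\sup_{t}|Z_t^\ell|^p]$ is finite by Lemma \ref{lm:intergrabilitySDErandomcoeffinteract}, whose right-hand side is itself finite thanks to the moment bound in \hvb; the remaining measure-dependent term is controlled, uniformly in time, by the moments of $\cV$ supplied by \hvb. Consequently the two right-hand sides above are bounded by a constant multiple of $(s-u)^{p}$ and $(s-u)^{p/2}$ respectively. Since $s - u \leq T$, the diffusion contribution dominates and yields $\bE[|Z_s^\ell - Z_u^\ell|^p] \leq c\,(s-u)^{p/2}$, i.e. the claimed estimate after taking $p$-th roots.

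The computation is otherwise entirely standard; the one place where the abstract structure genuinely enters is that the coefficients carry the exogenous random measure $\cV$, so that the growth term $\int_\Rd |y| \, \cV_{\eta_\ell(r)}(dy)$ must be absorbed through the moment assumption \hvb rather than treated as a deterministic constant. This is the only point requiring care, and it is precisely why the lemma is stated under \hvLip together with \hvb.
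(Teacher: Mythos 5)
Your argument is correct and is precisely the standard Hölder/Burkholder--Davis--Gundy estimate that the paper itself invokes when it states that the proof ``follows from standard estimates in the theory of SDE'' and omits it; the reduction to $p\geq 2$, the drift/diffusion split, the linear-growth bound from \hvLip, and the absorption of the $\int_{\R^d}|y|\,\cV_{\eta_\ell(r)}(dy)$ term via \hvb together with Lemma \ref{lm:intergrabilitySDErandomcoeffinteract} are exactly the intended route. The only point worth making explicit is that the stochastic integral is taken with respect to the enlarged filtration $\sigma(\F^\cV_T,\{W_u\}_{u\leq t},Z_0)$, under which $W$ remains a Brownian motion by the independence clause of \hvb, so that BDG applies; this is the same device the paper uses elsewhere via Lemma \ref{lem:conmartingaleinteract}.
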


\begin{lemma}[Strong convergence of $Z^{\ell}_t$]\label{lm:variance1nonintinteract}
Assume \hvLip, \hvb \, and \hvreg. Then for any $\ell \in \{ 1,2,\ldots, L \}$, there exists a constant $c>0$ such that

\[
	 \bE \bigg[ \sup_{0\leq t\leq T}|Z_{t}^{\ell}-Z_{t}^{\ell-1}|^2 \bigg]\leq c h_\ell.
\]
\end{lemma}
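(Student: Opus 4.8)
The plan is to run the classical strong-error argument for the Euler scheme, exploiting that \eqref{eq:defprocessZtinteract} at levels $\ell$ and $\ell-1$ is driven by the \emph{same} Brownian motion $W$ and started from the same initial datum, so that $\Delta_t := Z^\ell_t - Z^{\ell-1}_t$ satisfies $\Delta_0 = 0$ and
\[
\Delta_t = \int_0^t \big[\overline b(Z^\ell_{\eta_\ell(r)},\cV_{\eta_\ell(r)}) - \overline b(Z^{\ell-1}_{\eta_{\ell-1}(r)},\cV_{\eta_{\ell-1}(r)})\big]\,dr + \int_0^t \big[\overline\sigma(Z^\ell_{\eta_\ell(r)},\cV_{\eta_\ell(r)}) - \overline\sigma(Z^{\ell-1}_{\eta_{\ell-1}(r)},\cV_{\eta_{\ell-1}(r)})\big]\,dW_r.
\]
Taking $\sup_{t\le s}$, squaring, and using $(a+b)^2\le 2a^2+2b^2$, I would bound the drift part by the Cauchy--Schwarz inequality and the martingale part by Doob's $L^2$-inequality together with the It\^o isometry, reducing everything to $\int_0^s \big(\bE|D^b_r|^2 + \bE\norm{D^\sigma_r}^2\big)\,dr$, where $D^b_r, D^\sigma_r$ denote the two integrand differences above.

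The core step is to decompose each integrand difference through the intermediate evaluation $(Z^{\ell-1}_{\eta_\ell(r)},\cV_{\eta_\ell(r)})$, writing for the drift
\[
D^b_r = \underbrace{\overline b(Z^\ell_{\eta_\ell(r)},\cV_{\eta_\ell(r)}) - \overline b(Z^{\ell-1}_{\eta_\ell(r)},\cV_{\eta_\ell(r)})}_{T_1} + \underbrace{\overline b(Z^{\ell-1}_{\eta_\ell(r)},\cV_{\eta_\ell(r)}) - \overline b(Z^{\ell-1}_{\eta_{\ell-1}(r)},\cV_{\eta_\ell(r)})}_{T_{2a}} + \underbrace{\overline b(Z^{\ell-1}_{\eta_{\ell-1}(r)},\cV_{\eta_\ell(r)}) - \overline b(Z^{\ell-1}_{\eta_{\ell-1}(r)},\cV_{\eta_{\ell-1}(r)})}_{T_{2b}},
\]
and similarly for $\overline\sigma$. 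By the spatial Lipschitz bound in \hvLip, $|T_1|\le c|\Delta_{\eta_\ell(r)}|\le c\sup_{r'\le r}|\Delta_{r'}|$, which will feed the Gr\"onwall term. Again by \hvLip, $|T_{2a}|\le c|Z^{\ell-1}_{\eta_\ell(r)} - Z^{\ell-1}_{\eta_{\ell-1}(r)}|$; since $\eta_\ell(r)$ and $\eta_{\ell-1}(r)$ differ by at most $h_{\ell-1}=2h_\ell$, Lemma~\ref{lm:regularityYinteract} (with the moment bounds of \hvb) gives $\bE|T_{2a}|^2\le c\,h_{\ell-1}\le 2c\,h_\ell$. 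The term $T_{2b}$ is precisely the temporal oscillation of the coefficient in its measure argument over a gap of length $\le h_{\ell-1}$, which is what \hvreg is designed to control.

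Collecting the estimates and setting $\phi(s):=\bE[\sup_{r\le s}|\Delta_r|^2]$, which is finite by Lemma~\ref{lm:intergrabilitySDErandomcoeffinteract} together with \hvb, the three contributions assemble into
\[
\phi(s)\le c\int_0^s \phi(r)\,dr + c\,h_\ell,
\]
so Gr\"onwall's inequality yields $\phi(T)\le c\,h_\ell e^{cT}$, which is the claim. The order $h_\ell$ (rather than $O(1)$) is produced entirely by $T_{2a}$ and $T_{2b}$, i.e. by the one-step time regularity of $Z^{\ell-1}$ and of $\cV$ seen through the coefficients.

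The main obstacle is the rigorous treatment of $T_{2b}$, namely bounding $\bE\big[|\overline b(Z^{\ell-1}_{\eta_{\ell-1}(r)},\cV_{\eta_\ell(r)}) - \overline b(Z^{\ell-1}_{\eta_{\ell-1}(r)},\cV_{\eta_{\ell-1}(r)})|^2\big]$. Assumption \hvreg controls $\bE|\overline b(x,\cV_t) - \overline b(x,\cV_s)|^2$ only for a \emph{deterministic} spatial argument $x$, uniformly in $x$ and in expectation, whereas here the frozen state $Z^{\ell-1}_{\eta_{\ell-1}(r)}$ is itself a functional of $\cV$ and hence is not independent of the measure increment $\cV_{\eta_\ell(r)}-\cV_{\eta_{\ell-1}(r)}$. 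The natural route is to condition on $\cV$, reducing to an Euler scheme with conditionally deterministic, time-inhomogeneous coefficients; but carrying the one-step oscillation through this conditional expectation and back using only the uniform-in-$x$, in-expectation bound of \hvreg (rather than a pathwise-in-$\cV$ or Wasserstein-Lipschitz strengthening) is the genuinely delicate point. The remaining steps are the routine SDE estimates that the authors summarise as \emph{standard}.
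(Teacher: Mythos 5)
The paper does not actually prove this lemma: it is declared to ``follow from standard estimates in the theory of SDE'' and omitted, so your write-up is the only argument on the table. Your route is precisely the standard one the authors intend: couple the two levels through the same Brownian motion and the same initial sample so that $\Delta_0=0$, split the integrand difference into a spatial-Lipschitz term $T_1$ that feeds Gr\"onwall, a one-step time-regularity term $T_{2a}$ handled by Lemma \ref{lm:regularityYinteract} with $|\eta_\ell(r)-\eta_{\ell-1}(r)|\le h_{\ell-1}=2h_\ell$, and a measure-oscillation term $T_{2b}$ handled by \hvreg, then conclude with Doob's $L^2$-inequality, the It\^o isometry and Gr\"onwall; the finiteness of $\phi$ needed to start Gr\"onwall is indeed supplied by Lemma \ref{lm:intergrabilitySDErandomcoeffinteract} and \hvb. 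All of this is correct.

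Your concern about $T_{2b}$ is well placed and is the one genuinely non-routine point. As literally stated, \hvreg puts the supremum over $x$ \emph{outside} the expectation, and since $Z^{\ell-1}_{\eta_{\ell-1}(r)}$ is correlated with $\cV$, conditioning on $\F^\cV_T$ leaves you with $\bE\big[\int_{\R^d}|\overline b(x,\cV_{\eta_\ell(r)})-\overline b(x,\cV_{\eta_{\ell-1}(r)})|^2\,\mu^{Z^{\ell-1}|\F^\cV_T}_{\eta_{\ell-1}(r)}(dx)\big]$, in which the random measure and the integrand are both $\cV$-measurable; this is controlled by $\bE[\sup_x|\cdot|^2]$, not by $\sup_x\bE[|\cdot|^2]$, and neither a covering argument nor interpolation via \hvLip recovers the order $h_\ell$ from the weaker form alone. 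The resolution is that in the only instance where the lemma is invoked, $\cV_t=\cM^{(m-1)}_t$ and $\overline b(x,\mu)=\langle\mu,b(x,\cdot)\rangle$, and the verification of \hvreg in Lemma \ref{prop:nonintcomplexitywemfZtinteract} actually delivers the stronger, $x$-uniform pathwise bound: by \hLip the oscillation $|\langle\cM^{(n)}_t-\cM^{(n)}_s,b(x,\cdot)\rangle|$ is dominated by an $x$-independent random variable built from the particle increments $|Y^{i,n,\ell}_{t'}-Y^{i,n,\ell}_{s'}|$, whose second moment is $O(t-s)$ by Lemma \ref{lm:regularityYinteract}. Reading \hvreg in this strengthened form ($\bE\sup_x$ rather than $\sup_x\bE$), your conditioning step closes and the proof is complete. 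So the gap you flag is real, but it lies in the phrasing of the abstract assumption rather than in your plan; you should either state explicitly that you use the strengthened version of \hvreg or record that it holds in the intended application.
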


%\stodo{Still put this lemma here: because even we are now doing analysis given assumption on $\cV$ but we need $Z$ to construct MLMC estimator so this lemma completes MLMC variance in full time grids $\Pi^L$. In addition, if $\cV^{Z^\ell}$ is not in this form we would not obtain this bound because even we assume some property for $\cV$ but it does not naturally leads to $\cV^Z$ satisfying the same property. But MLMC construction confirm if $\cV$ holds those properties, $\cV^Z$ would hold as well.}
We define the interpolated empirical measures  $ \widetilde \mu_t^{Z^{\ell},N}$ exactly as in \eqref{eq:intmeasureintro} 
% \begin{align} \label{eq:intmeasure}
% \widetilde{\mathcal{V}}^{\ell,N}_t \defeq
%\begin{cases}
% \frac{1}{N} \sum_{i=1}^N \delta_{Z^{i,\ell}_t}   &, t \in \Pi^{\ell}, \\
%  {} & {} \\
%	\bigg[ \frac{ t- \eta_{\ell} (t)}{h_{\ell}} \bigg] \widetilde{\mathcal{V}}^{ \ell,N}_{\eta_{\ell} (t) + h_{\ell}} + \bigg[ 1- \frac{ t- \eta_{\ell} (t)}{h_{\ell}} \bigg] \widetilde{\mathcal{V}}^{ \ell,N}_{\eta_{\ell} (t)} & ,t \notin \Pi^{\ell},
% \end{cases}
%\end{align}
and the corresponding MLMC operator $\mathcal{M}_t$  (corresponding to \eqref{eq:defMLMCop}, but for one Picard iteration) as
\[
 \langle \mathcal{M}_t , P(x,\cdot) \rangle  = \left \langle \sum_{\ell=0}^L \Big( \widetilde \mu_t^{Z^{\ell},N_{\ell}} -\widetilde \mu_t^{Z^{\ell-1},N_{\ell}} \Big) , P(x,\cdot) \right \rangle, \quad \quad \widetilde \mu_t^{Z^{-1},N_{0}} :=0.
 \]
We also define $\sigma$-algebra $\F^\cV_t=\{\sigma(\cV_{s})_{0\leq s \leq t} \}$. Since samples $\{Z_{\eta_L(t)}^{i,\ell}\}$,  $i=1,\ldots,N_{\ell}$,  $\ell =0, \ldots, L$, conditioned on $\F^\cV_T$ are independent, we can bound the conditional MLMC variance as follows.

\begin{lemma}\label{lm:varianceinterpvsnoninterpinteract}
Assume \hvLip, \hvb \, and \hvreg \,  hold. Let $\mu \in \cP_2\big( C([0,T],\bR^d) \big)$. Then for any Lipschitz function $P:\bR^d \times \bR^d \to \bR$, there exists a constant $c$ such that
\begin{align}\label{eq:interpstrongerrorinteract}
	\sup_{0\leq t\leq T} \int_{\bR^d} \bE \bigg[ \text{\emph{Var}} \bigg( \langle \mathcal{M}_{\eta_L(t)} , P(x,\cdot) \rangle    \bigg|\F^\cV_T \bigg) \bigg]  \, \mu_t (dx) \leq c\sum_{\ell=0}^L \frac{ h_\ell}{N_{\ell}}.
\end{align}
\end{lemma}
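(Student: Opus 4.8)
The plan is to condition on $\F^\cV_T$ and exploit two independence structures that this conditioning unlocks: conditionally on $\F^\cV_T$ the level increments are driven by mutually independent samples (within each level the fine and coarse systems share a Brownian motion, while distinct levels use independent families and independent initial data, as is standard in MLMC), and within a fixed level the coupled pairs are i.i.d. across the particle index. Fix $x \in \bR^d$ and set $s := \eta_L(t)$. Since the randomness driving distinct levels is conditionally independent, the level terms $\langle \widetilde\mu_s^{Z^\ell,N_\ell} - \widetilde\mu_s^{Z^{\ell-1},N_\ell}, P(x,\cdot)\rangle$ are conditionally independent, whence
\[
\Var\big( \langle \mathcal{M}_s, P(x,\cdot)\rangle \,\big|\, \F^\cV_T \big) = \sum_{\ell=0}^L \Var\big( \langle \widetilde\mu_s^{Z^\ell,N_\ell} - \widetilde\mu_s^{Z^{\ell-1},N_\ell}, P(x,\cdot)\rangle \,\big|\, \F^\cV_T \big).
\]
Writing $I_\ell[g](s)$ for the piecewise-linear interpolation on the grid $\Pi^\ell$ of a function $g:[0,T]\to\bR$ evaluated at $s$, the $\ell$-th pairing is the empirical average $N_\ell^{-1}\sum_{i=1}^{N_\ell}\Delta^\ell_i$ of the conditionally i.i.d. variables $\Delta^\ell_i := I_\ell[P(x,Z^{i,\ell}_\cdot)](s) - I_{\ell-1}[P(x,Z^{i,\ell-1}_\cdot)](s)$. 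Consequently each conditional level variance equals $N_\ell^{-1}\,\Var(\Delta^\ell_1 \mid \F^\cV_T) \le N_\ell^{-1}\,\bE[(\Delta^\ell_1)^2 \mid \F^\cV_T]$, and taking unconditional expectations reduces the entire statement to showing $\bE[(\Delta^\ell_1)^2] \le c\,h_\ell$ for $\ell \ge 1$ and $\bE[(\Delta^0_1)^2] \le c$ (recalling $h_0 = T$ and $\widetilde\mu^{Z^{-1},N_0}:=0$).

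For $\ell \ge 1$ I would insert the intermediate term $I_{\ell-1}[P(x,Z^{1,\ell}_\cdot)](s)$ and split $\Delta^\ell_1$ into a strong-error part $I_{\ell-1}[P(x,Z^{1,\ell}_\cdot)-P(x,Z^{1,\ell-1}_\cdot)](s)$ and an interpolation-error part $(I_\ell - I_{\ell-1})[P(x,Z^{1,\ell}_\cdot)](s)$. Since interpolation is a convex combination of node values, the Lipschitz property of $P$ in its second argument bounds the first part by $L_P \sup_{0\le r\le T}|Z^{1,\ell}_r - Z^{1,\ell-1}_r|$, so its squared expectation is $\le c\,h_\ell$ by the strong-convergence estimate of Lemma \ref{lm:variance1nonintinteract}. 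For the second part, the dyadic nesting $\Pi^{\ell-1}\subset\Pi^\ell$ implies that each coarse interval contains exactly one intermediate fine node $r^*$, so the difference of the two interpolants at $s$ is controlled by the two increments $|Z^{1,\ell}_{r^*}-Z^{1,\ell}_{\eta_{\ell-1}(s)}|$ and $|Z^{1,\ell}_{r^*}-Z^{1,\ell}_{\eta_{\ell-1}(s)+h_{\ell-1}}|$, each over a time span $h_\ell$; Lipschitz continuity and the time-regularity estimate of Lemma \ref{lm:regularityYinteract} then give squared expectation $\le c\,h_\ell$. Combining the two parts yields $\bE[(\Delta^\ell_1)^2]\le c\,h_\ell$.

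The level $\ell=0$ term has no coarse partner, so $\Delta^0_1 = I_0[P(x,Z^{1,0}_\cdot)](s)$; here I would use invariance of the variance under additive constants to center $P(x,\cdot)$ at $P(x,0)$, giving $\Var(\Delta^0_1\mid\F^\cV_T)\le L_P^2\,\bE[\sup_{0\le r\le T}|Z^{1,0}_r|^2\mid\F^\cV_T]$, whose expectation is bounded by a constant through the integrability estimate of Lemma \ref{lm:intergrabilitySDErandomcoeffinteract} together with \hvb. Since all of these bounds are uniform in $x$, integrating against the probability measure $\mu_t$ and taking the supremum over $t\in[0,T]$ preserves them and produces the claimed $c\sum_{\ell=0}^L h_\ell/N_\ell$. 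The step I expect to be most delicate is the interpolation bookkeeping: because the fine and coarse empirical measures are interpolated on different grids, $\Delta^\ell_1$ does not reduce to a single strong-error difference, and one must isolate a genuine time-interpolation error and absorb it using the pathwise time-regularity of the process rather than the level-coupling (strong-convergence) estimate alone.
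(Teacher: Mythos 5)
Your proposal is correct and follows essentially the same route as the paper: conditional independence across levels and particle indices reduces the conditional variance to $\sum_\ell N_\ell^{-1}$ times a second moment of a single-particle level difference, which is then split into a level-coupling term controlled by the strong-convergence estimate (Lemma \ref{lm:variance1nonintinteract}) and a time-interpolation term controlled by the time-regularity estimate (Lemma \ref{lm:regularityYinteract}). The only cosmetic differences are that you add and subtract the coarse-grid interpolant of the fine-level process where the paper inserts the coarse-level process at the fine-grid nodes, and that you center $P(x,\cdot)$ at level $0$ to get a bound uniform in $x$ where the paper uses linear growth and integrates the resulting $|x|^2$ term against $\mu_t$.
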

\begin{proof}
The independence condition in  \hvb $\,$ implies that
\begin{eqnarray}  && \bE \bigg[ \text{Var}  \bigg( \langle \mathcal{M}_{\eta_L(t)} , P(x,\cdot) \rangle  \bigg|\F^\cV_T \bigg) \bigg] \nonumber \\
& =  & \sum_{i=1}^{N_{0}} \frac{1}{N^2_{0}} \bE \bigg[ \text{Var} \Big[ {P}^{i, 0}_{\eta_L(t)}   \Big| \F^\cV_T \Big] \bigg]+ \sum_{\ell=1}^L \sum_{i=1}^{N_{\ell}} \frac{1}{N^2_{\ell }} \bE \bigg[ \text{Var} \Big[ {P}^{i, \ell}_{\eta_L(t)} -  {P}^{i, \ell-1}_{\eta_L(t)} \Big| \F^\cV_T \Big] \bigg], \nonumber   \end{eqnarray}
where 
\begin{equation} \label{lm:definitionP}
{P}^{i, \ell}_{\eta_L(t)} : = (1-\lambda^{\ell}_t)P(x,Z_{ \eta_{\ell} (\eta_L (t))}^{i,\ell})+\lambda^{\ell}_t P(x,Z_{{ \eta_{\ell} (\eta_L (t))+h_\ell}}^{i,\ell}),
\end{equation}
$\lambda^{\ell}_t = \frac{ \eta_L (t)- \eta_{\ell} ( \eta_L (t))}{h_{\ell}}\in[0,1]$. 
% \label{eq:c1ineq3interact}
Using the fact that $\bE[\text{Var}(X|\mathcal{G})]\leq \text{Var}(X) \leq \bE[X^2]$, we obtain the bound
\begin{gather} \bE \bigg[ \text{Var} \bigg( \langle \mathcal{M}_{\eta_L(t)} , P(x,\cdot) \rangle \bigg|\F^\cV_T \bigg) \bigg] \leq  \sum_{i=1}^{N_{0}} \frac{1}{N^2_{0}} \bE \bigg| {P}^{i, 0}_{\eta_L(t)} \bigg|^2 + \sum_{\ell=1}^L \sum_{i=1}^{N_{\ell}} \frac{1}{N^2_{\ell }} \bE \bigg| {P}^{i, \ell}_{\eta_L(t)} -  {P}^{i, \ell-1}_{\eta_L(t)} \bigg|^2. \nonumber \end{gather}
Since $P$ is Lipschitz, it has linear growth. By Lemma \ref{lm:intergrabilitySDErandomcoeffinteract}, it follows that
\begin{gather}
    \bE \bigg| {P}^{i, 0}_{\eta_L(t)} \bigg|^2 \leq c \sup_{0 \leq t \leq T} \int_{\bR^d} \bigg( x^2 + \bE \big| Z^{i,0}_{\eta_0 (\eta_L(t))} \big|^2 +  \bE \big| Z^{i,0}_{\eta_0 (\eta_L(t)) +h_0} \big|^2 \, \bigg) \mu_t (dx) < +\infty. \nonumber 
\end{gather}
Next, we consider levels $\ell \in \{1, \ldots, L\}$. Recall from (\ref{lm:definitionP}) that
\begin{align*}
	{P}^{i, \ell}_{\eta_L(t)} &= (1-\lambda^{\ell}_t)P(x,Z_{ \eta_{\ell} (\eta_L (t))}^{i,\ell})+\lambda^{\ell}_t P(x,Z_{{ \eta_{\ell} (\eta_L (t))+h_\ell}}^{i,\ell}),\\
	{P}^{i, \ell-1}_{\eta_L(t)} &= (1-\lambda^{\ell-1}_t)P(x,Z_{\eta_{\ell-1}(\eta_L(t))}^{i,\ell-1})+\lambda^{\ell-1}_t P(x,Z_{\eta_{\ell-1}(\eta_L(t))+h_{\ell-1}}^{i,\ell-1}).
\end{align*}
 We decompose the error as follows. \small
{\begin{eqnarray}
&& |{P}^{i, \ell}_{\eta_L(t)}-{P}^{i, \ell-1}_{\eta_L(t)}| \nonumber \\
&\leq & (1-\lambda^{\ell-1}_t)\cdot\bigg|P(x,Z_{{\eta_\ell(\eta_L(t))}}^{i,\ell})\pm P(x,Z^{i,\ell-1}_{{\eta_\ell(\eta_L(t))}})-P(x,Z_{\eta_{\ell-1}(\eta_L(t))}^{i,\ell-1})\bigg| \nonumber\\
&&+\, \lambda^{\ell-1}_t\cdot\bigg|P(x,Z_{\eta_\ell(\eta_L(t))+h_\ell}^{i,\ell})\pm P(x,Z^{i,\ell-1}_{\eta_\ell(\eta_L(t))+h_\ell}) - P(x,Z_{\eta_{\ell-1}(\eta_L(t))+h_{\ell-1}}^{i,\ell-1})\bigg| \nonumber \\
&& +\, |\lambda^{\ell}_t-\lambda^{\ell-1}_t|\cdot\bigg|P(x,Z^{i,\ell}_{\eta_\ell(\eta_L(t))+h_\ell})- P(x,Z^{i,\ell}_{\eta_\ell(\eta_L(t))})\bigg|. \nonumber
\end{eqnarray}}
\normalsize{By Lemma \ref{lm:variance1nonintinteract},} \small 
{\begin{eqnarray}\label{eq:lm:varianceinterpvsinterp1interact}
\bE|P(x,Z_{\eta_\ell(\eta_L(t))}^{i,\ell})-P(x,Z^{i,\ell-1}_{\eta_\ell(\eta_L(t))})|^2 &\leq & c h_{\ell},\\
\bE|P(x,Z_{\eta_\ell(\eta_L(t))+h_\ell}^{i,\ell})-P(x,Z^{i,\ell-1}_{\eta_\ell(\eta_L(t))+h_\ell})|^2 &\leq & c h_{\ell}. \label{eq:lm:varianceinterpvsinterp1interact2}
\end{eqnarray}}
\normalsize{Also, by Lemma \ref{lm:regularityYinteract},} \small 
{\begin{equation}
    \bE|P(x,Z^{i,\ell-1}_{\eta_\ell(\eta_L(t))})-P(x,Z_{\eta_{\ell-1}(\eta_L(t))}^{i,\ell-1})|^2  \leq  c (\eta_\ell(\eta_L(t))-\eta_{\ell-1}(\eta_L(t)))\leq c h_\ell,   \label{eq:lm:varianceinterpvsinterp2interact} 
\end{equation}
\begin{equation}
    \bE|P(x,Z^{i,\ell-1}_{\eta_\ell(\eta_L(t))+h_\ell}) - P(x,Z_{\eta_{\ell-1}(\eta_L(t))+h_{\ell-1}}^{i,\ell-1})|^2  \leq  c h_{\ell},  \label{eq:lm:varianceinterpvsinterp3interact} 
\end{equation}}
\normalsize{and} \small{
\begin{equation}
    \bE|P(x,Z^{i,\ell}_{\eta_{\ell}(\eta_L(t))+h_{\ell}})- P(x,Z^{i,\ell}_{\eta_{\ell}(\eta_L(t))})|^2  \leq  c h_{\ell}. \label{eq:lm:varianceinterpvsinterp4interact} 
\end{equation}}
\normalsize{We obtain \eqref{eq:interpstrongerrorinteract} by combining \eqref{eq:lm:varianceinterpvsinterp1interact}, \eqref{eq:lm:varianceinterpvsinterp1interact2}, \eqref{eq:lm:varianceinterpvsinterp2interact}, \eqref{eq:lm:varianceinterpvsinterp3interact} and \eqref{eq:lm:varianceinterpvsinterp4interact}. 
Since $t$ and $x$ are arbitrary, the proof is complete.}

\end{proof}

\subsection{Weak error analysis} 
We begin this subsection by defining  $\cX^{s,x}$ 
as
$$ \cX^{s,x}_t = x + \int_s^t b[ \cX^{s,x}_u, \mu^X_u] \,du +\int_s^t \sigma[ \cX^{s,x}_u, \mu^X_u] \,dW_u.  $$
 For  $P \in C^{0,2}_{b,b} (\bR^d \times \bR^d, \bR)$ and $t \in [0,T],$ we consider the function
\begin{align}\label{eq:defvinteract}
v_y(s,x):=\E[P(y,\cX_t^{s,x})], \quad y\in\R^d~\text{and}~(s,x)\in[0,t]\times \RR^d.
\end{align}
We aim to show that $v_y(s,x)\in C^{1,2}$. The first step is the lemma below.    
\begin{lemma}\label{lm:regularityBinteract}
Assume  \hLaw \, and \hkreg.   Then 
\[
	b[\cdot,\mu_{\cdot}^X]\in C^{2,1}_{b,b}(\R^d \times [0,T],\R^d)~\text{and}~\sigma[\cdot,\mu_{\cdot}^X]\in C^{2,1}_{b,b}(\R^d \times [0,T],\R^{d\otimes r}).
\]
\end{lemma}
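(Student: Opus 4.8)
The statement bundles two separate regularity claims about the map
\[
(x,t)\ \longmapsto\ b[x,\mu^X_t]=\int_{\R^d}b(x,y)\,\mu^X_t(dy)=\E[b(x,X_t)]
\]
(and, by a verbatim argument, about $\sigma$): two bounded continuous derivatives in the spatial variable $x$, and one bounded continuous derivative in time $t$. Under the (parabolic) $C^{2,1}_{b,b}$ convention for the target space no mixed space–time derivative is demanded, so the plan is to control $\partial_{x_i}$, $\partial^2_{x_ix_j}$ and $\partial_t$ separately, treating only $b$. The two halves of \hkreg enter in complementary ways: the $C^{2,1}_{b,b}$ part supplies bounded $x$-derivatives of $b(\cdot,y)$, while the $C^{0,2}_{b,p}$ part both makes $b(x,\cdot)$ of class $C^2$ (needed for It\^o) and provides polynomial-growth bounds on its $y$-derivatives that are \emph{uniform in the first argument}. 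Throughout I rely on the moment bound \eqref{eq:regularityofXinteract}, valid for every $p\ge1$ under \hLaw.

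For the spatial regularity I would differentiate under the expectation. Since $\partial_{x_i}b$ and $\partial^2_{x_ix_j}b$ are bounded and continuous by the $C^{2,1}_{b,b}$ component, dominated convergence justifies $\partial_{x_i}b[x,\mu^X_t]=\E[\partial_{x_i}b(x,X_t)]$ and $\partial^2_{x_ix_j}b[x,\mu^X_t]=\E[\partial^2_{x_ix_j}b(x,X_t)]$; these are bounded by the respective sup-norms uniformly in $(x,t)$ and jointly continuous in $(x,t)$, again by dominated convergence using that $X_{t_n}\to X_t$ a.s. along any $t_n\to t$. This disposes of the $x$-direction without invoking moments.

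The time regularity is the substantive part. The plan is to apply It\^o's formula componentwise to $y\mapsto b^k(x,y)$ along the semimartingale $X$ and take expectations, obtaining
\[
\E[b^k(x,X_t)]=\E[b^k(x,X_0)]+\int_0^t \E\big[(\cL_s b^k(x,\cdot))(X_s)\big]\,ds,
\]
where $(\cL_s\phi)(z):=\sum_i\partial_{y_i}\phi(z)\,b_i[z,\mu^X_s]+\tfrac12\sum_{i,j}\partial^2_{y_iy_j}\phi(z)\,(\sigma\sigma^T)_{ij}[z,\mu^X_s]$. I would then read off $\partial_t b^k[x,\mu^X_t]=\E[(\cL_t b^k(x,\cdot))(X_t)]$, subject to two checks. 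The stochastic integral is a true martingale because $\E\int_0^t|\partial_{y_i}b^k(x,X_s)|^2\,\|\sigma[X_s,\mu^X_s]\|^2\,ds<\infty$: the first factor is $O(1+|X_s|^{2p})$ by the polynomial bound in $C^{0,2}_{b,p}$, the second is $O(1+|X_s|^2+\text{moments})$ by the linear growth implied by \hLip, and \eqref{eq:regularityofXinteract} makes the expectation finite. Boundedness of $\partial_t b^k[x,\mu^X_t]$ uniformly in $(x,t)$ follows from the same polynomial bounds — crucially uniform in $x$ — together with the uniform-in-$t$ moment bounds.

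The main obstacle I anticipate is the continuity in time of $s\mapsto\Phi^k(x,s):=\E[(\cL_s b^k(x,\cdot))(X_s)]$, which is precisely what upgrades the integral representation to a genuine $C^1$-in-$t$ statement (and yields continuity of $\partial_t$). The difficulty is the double $s$-dependence of the coefficients: through the path value $X_s$ and through the law $\mu^X_s$ appearing in $b_i[\cdot,\mu^X_s]$ and $(\sigma\sigma^T)_{ij}[\cdot,\mu^X_s]$. I would resolve it by noting that $s\mapsto X_s$ is a.s. continuous and that $s\mapsto\mu^X_s$ is continuous in $\cP_2(\R^d)$ (from path continuity plus \eqref{eq:regularityofXinteract}), so that $z\mapsto b_i[z,\mu^X_s]$ is jointly continuous in $(z,s)$; composing with the continuous $y$-derivatives of $b^k$ and passing to the limit under the expectation by dominated convergence, dominating with a polynomial in $\sup_{0\le s\le T}|X_s|$ (integrable to every order), gives continuity of $\Phi^k(x,\cdot)$. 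The parallel bookkeeping for $\sigma$ and the routine verification of the growth/integrability estimates are the only remaining points, and they are mechanical given the hypotheses.
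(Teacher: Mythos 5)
Your proposal is correct and follows essentially the same route as the paper: apply It\^o's formula to $y\mapsto b_k(x,y)$ along $X$, verify that the stochastic integral is a true martingale via the polynomial-growth bounds from \hkreg together with \eqref{eq:regularityofXinteract}, and then differentiate the resulting integral representation in $t$, with boundedness of $\partial_t b_k[x,\mu^X_t]$ coming from the same growth and moment estimates. You are in fact somewhat more complete than the paper's sketch, since you also spell out the spatial derivatives (differentiation under the expectation) and the time-continuity of the integrand needed to invoke the fundamental theorem of calculus, both of which the paper leaves implicit.
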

\begin{proof}
 For any $x\in\R^d$, $s\in[0,T]$ and $t\in[s,T]$, we apply It\^{o}'s formula to each coordinate $k\in\{1,\ldots,d\}$ of $b$ to get
\begin{equation}\label{eq:lmregularityB1}
\begin{split}
	b_k(x,X_t)&=b_k(x,X_s) + \int_s^t\smfir\smfi {\partial_{y_j} b_k}(x,X_u)\sigma_{ji}[X_u,\mu_u^X]dW_u^i\\&\ + \int_s^t\smfir{\partial_{y_j}  b_k} (x,X_u) b_j[X_u, \mu_u^X ]du+\frac{1}{2}\int_s^t\smsec{\partial^2_{y_i,y_j}  b_k}(x,X_u)a_{ij}[X_u,\mu_u^X]du,
\end{split}
\end{equation}
where $a[x,\mu]=\sigma[x,\mu]\sigma[x,\mu]\tr$ and $\partial_{y_i} b_k, \partial^2_{y_i,y_j}  b_k$ indicate the derivatives w.r.t. the  the second argument. Assumptions \hkreg, \, \hLip, \, \hLaw  \, and \eqref{eq:regularityofXinteract} imply that the above stochastic integral is a martingale. By the fundamental theorem of calculus,
\begin{gather}
		\partial_t\bE[b_{k}(x, X_t)] = \bE\bigg[\smfir \partial_{y_j}  b_k(x, X_t) b_j[x,\mu_t^X] +\frac{1}{2}\smsec \partial^2_{y_i,y_j}  b_k(x,X_t)a_{ij}[x,\mu_t^X]\bigg].
\end{gather}

By  \hkreg, $\partial_{y_j}  b_k$ and $\partial^2_{y_i,y_j}  b_k$ are bounded. Moreover, by \hLip, we know that $b$ and $a$ are respectively of linear and quadratic growth in $x$. Therefore, by \eqref{eq:regularityofXinteract}, we conclude that $\partial_t b_k[x,\mu_t^X] $ is bounded.
To conclude, we can apply the same argument to $\sigma[\cdot,\mu_\cdot^X]$.\end{proof}

\begin{lemma} \label{eq:regularityofderivativeofvy}${}$ Assume \hkreg \, and \hLaw. Then for any $(s,x) \in [0,t] \times \bR^d$,  $(i,j)\in\{1,\ldots,d\}^2$ and $P \in C^{0,2}_{b,b}(\bR^d \times \bR^d, \bR)$,
\[ \tag{\hvdiffs}
      \sup_{y\in\R^d} ( \|\partial_{x_i} v_y (s,x) \|_{\infty}   + \|\partial^2_{x_i, x_j} v_y  (s,x) \|_{\infty} \leq L. 
\]  

\end{lemma}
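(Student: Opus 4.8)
The plan is to treat $\cX^{s,x}$ as an ordinary (non-McKean) SDE with time-dependent coefficients and to differentiate its flow with respect to the initial datum $x$. Write $\tilde b(u,z):=b[z,\mu_u^X]$ and $\tilde\sigma(u,z):=\sigma[z,\mu_u^X]$, so that $\cX^{s,x}$ solves $d\cX^{s,x}_u=\tilde b(u,\cX^{s,x}_u)\,du+\tilde\sigma(u,\cX^{s,x}_u)\,dW_u$ with $\cX^{s,x}_s=x$. By Lemma \ref{lm:regularityBinteract}, the maps $z\mapsto\tilde b(u,z)$ and $z\mapsto\tilde\sigma(u,z)$ are twice continuously differentiable, with first and second spatial derivatives bounded by a constant depending only on the constant $L$ from \hkreg, uniformly in $u\in[0,T]$. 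Under these hypotheses the classical theory of stochastic flows (e.g.\ Kunita) guarantees that $x\mapsto\cX^{s,x}_t$ is almost surely twice continuously differentiable, and that the derivatives solve the linear SDEs obtained by formal differentiation of the equation for $\cX^{s,x}$.

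The next step is to control the variation processes. The first variation $J^{s,x}_t:=\partial_x\cX^{s,x}_t$ (a $d\times d$ matrix) satisfies
\[
J^{s,x}_t=I+\int_s^t\partial_z\tilde b(u,\cX^{s,x}_u)\,J^{s,x}_u\,du+\int_s^t\partial_z\tilde\sigma(u,\cX^{s,x}_u)\,J^{s,x}_u\,dW_u,
\]
and the second variation $H^{s,x}_t:=\partial^2_x\cX^{s,x}_t$ satisfies an analogous linear SDE whose inhomogeneous terms are quadratic in $J^{s,x}$ and involve $\partial^2_z\tilde b$ and $\partial^2_z\tilde\sigma$. Since all of these coefficients are bounded by a constant depending only on $L$, a routine application of the Burkholder--Davis--Gundy and Gr\"onwall inequalities yields, for every $p\geq 1$, a constant $C_p=C_p(T,d,r,L)$, independent of $s,x,t,y$, such that
\[
\sup_{x\in\R^d}\ \sup_{0\leq s\leq t\leq T}\Big(\bE\big[|J^{s,x}_t|^p\big]+\bE\big[|H^{s,x}_t|^p\big]\Big)\leq C_p.
\]

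Finally, I would pass the derivatives through the expectation. Since $P\in C^{0,2}_{b,b}$, the gradient $\nabla_z P(y,\cdot)$ and Hessian $\nabla^2_z P(y,\cdot)$ in the second argument are bounded by a constant $L_0$ that does not depend on $y$. The moment bounds above justify differentiating under the expectation in $v_y(s,x)=\bE[P(y,\cX^{s,x}_t)]$, and the chain rule gives
\[
\partial_{x_i}v_y(s,x)=\bE\big[\nabla_z P(y,\cX^{s,x}_t)\cdot (J^{s,x}_t)_{\cdot i}\big]
\]
and
\[
\partial^2_{x_i,x_j}v_y(s,x)=\bE\big[(J^{s,x}_t)_{\cdot j}\tr\,\nabla^2_z P(y,\cX^{s,x}_t)\,(J^{s,x}_t)_{\cdot i}+\nabla_z P(y,\cX^{s,x}_t)\cdot (H^{s,x}_t)_{\cdot ij}\big].
\]
Bounding $|\nabla_z P|,|\nabla^2_z P|\leq L_0$ and applying Cauchy--Schwarz together with the moment bounds (with $p=2$ for the $J\,J$ term and $p=1$ for the $H$ term) yields a bound of the form $L_0(C_2+C_1)$, which is uniform in $y,s,x$; renaming this constant $L$ gives the claim. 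The technical heart, and the step I expect to require the most care, is the rigorous justification of the twice differentiability of the flow, in particular the moment bound for the second variation $H^{s,x}_t$ and the interchange of differentiation with expectation; both are standard once the coefficients are known to be $C^2$ with bounded derivatives, which is exactly what Lemma \ref{lm:regularityBinteract} provides.
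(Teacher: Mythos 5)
Your proposal is correct and follows essentially the same route as the paper: both rely on Lemma \ref{lm:regularityBinteract} to reduce to an SDE with $C^2$ coefficients having bounded spatial derivatives, then bound the moments of the first (and second) variation processes and differentiate under the expectation — the paper simply outsources the representation $\partial_{x_i} v_y(s,x)=\sum_j \bE[\partial_{y_j}P(y,X^{s,x}_t)\,\partial_{x_i}(X^{s,x}_t)^{(j)}]$ to Theorem 5.5.5 of Friedman and sketches the rest, whereas you write out the variation SDEs and the Gr\"onwall/BDG estimates explicitly. No gaps.
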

\begin{proof}
We only provide a sketch as the argument is standard. By the fact that  the first-order spatial derivatives of $b[\cdot,\mu_{\cdot}^X]$ and $\sigma[\cdot,\mu_{\cdot}^X]$ are bounded, it is straightforward to deduce that
\begin{equation} \label{eq:boundedfirstorder} 
\sup_{x \in \mathbb{R}^d} \sup_{s \in [0,t]} \bE \bigg[  \bigg| \partial_{x_i} (X^{s,x}_t)^{(j)} \bigg|^2 \bigg] <  \infty.
\end{equation}
Theorem 5.5.5 in \cite{friedman2006stochastic} establishes that
\begin{equation} \label{eq:vyfirstderivative} \partial_{x_i} v_y  (s,x) = \sum_{j=1}^d \bE \bigg[ \partial_{y_j} P(y, X^{s,x}_t) \partial_{x_i} (X^{s,x}_t)^{(j)} \bigg].\end{equation}
By \eqref{eq:boundedfirstorder}, it is clear that the assertion for the first order derivatives in (\hvdiffs)~ holds if $P \in C^{0,2}_{b,b}(\bR^d \times \bR^d, \bR)$. Similarly, we can prove the assertion for the second order derivatives in the same way.
\end{proof}

By the Feynman-Kac theorem (\cite{MR601776}), it can be shown that  $v_y(\cdot,\cdot)$ satisfies the following Cauchy problem, 
\begin{equation}\label{eq:proofalphafinalnonintinteract}
 \left\{
\begin{array}{rl}
        \partial_s v_y (s,x)  + \displaystyle\frac{1}{2}\smsec  \Big( \sigma[x, \mu^X_s] \sigma[x, \mu^X_s]^T \Big)_{ij}  \partial^2_{x_i,x_j} v_y (s,x) &\\
       +\displaystyle\sum_{j=1}^d \Big( b[x, \mu^X_s] \Big)_j  \partial_{x_j} v_y  (s,x) &=0, \quad (s,x)\in[0,t]\times \RR^d,\\
        v_y (t,x) &= P(y,x). \\
\end{array} 
\right.
\end{equation}

The following theorem reveals the order of weak convergence of \eqref{eq:defprocessZtinteract} to \eqref{eq:general1}. We denote by ${\mu}^{Z^{{\ell}}|\F_T^\cV}_{t}$ the regular conditional probability measure of $Z^{{\ell}}_t$ given $F_T^\cV$. (See Theorem 7.1 in \cite{parthasarathy1967probability} for details.) The existence of regular conditional probability measure follows from the fact that we work on a Polish space with the Borel $\sigma-$algebra. 
%We present the result using both the weighted $L^1$-norm and 
%the supremum norm (which requires the following stronger assumption on the interacting kernels) in the space. 
%\begin{itemize}
%\item[\hkb] Kernels $b: \mathbb{R}^d \times \mathbb{R}^{d} \rightarrow \mathbb{R}^d$ and $\sigma: \mathbb{R}^d \times \mathbb{R}^d \rightarrow \mathbb{R}^{d\otimes r}$  are bounded by a constant $K$, i.e.
%\begin{align*}
%\sup_{(x_1,y_1)\in\mathbb{R}^d \times \mathbb{R}^{d}}\| b(x_1,y_1) \|  + \sup_{(x_2,y_2)\in\mathbb{R}^d \times \mathbb{R}^{d}} \| \sigma(x_2,y_2) \|  & \leq K< \infty.
%\end{align*}

%\end{itemize}
%\red{do we need to bound in both variables only one? }
%\alvin{In fact, the assumption of \hkb seems a bit strange to me, since the actual definitions of $\overline{b}$ and $\overline{\sigma}$ are not defined at this stage of the paper, i.e. $\overline{b}$ and $\overline{\sigma}$ are arbitrary functions that have nothing to do with $b$ and $\sigma$. Requiring boundedness assumptions on $b$ and $\sigma$ does not guarantee boundedness of  $\overline{b}$ and $\overline{\sigma}$... Have I missed anything? I think that the remark on the sup norm should be deleted.}
%\ls{I agree with the comment. At this point we need to assume that $\overline{b}$ and $\overline{\sigma}$ are bounded and in the late section show that boundedness is in fact due to \hkb}

\begin{theorem}\label{lm:conditionalwknonintmaininteract}
Let $P \in C^{0,2}_{b,b}( \bR^d \times \bR^d, \bR)$ be a Lipschitz continuous function. \footnote{ Note that the regularity of $P$ can be relaxed to $C^{0,2}_{b,p}( \bR^d \times \bR^d, \bR)$. We prove the result in a slightly stronger assumption for the sake of simplicity.}  Assume that \hkreg, \hLaw, \hvb \,  and \hvLip $\,$ hold. 
Then there exists a constant $c$ $($independent of the choices of $L$ and $N_1, \ldots, N_{L})$ such that for each $t\in[0,T]$, $\ell \in \{0, \ldots, L \}$ and $x\in\R^d$,
\begin{align*}
\sup_{0\leq s\leq t} & |\bE[P(x,Z_{s}^{\ell})]-\bE[P(x,X_{s})]| \\
&\leq c\bigg(h_{\ell}+\int_{0}^{t} \bE\bigg[\int_{\R^d} \Big|\overline{b}(x,\cV_{\eta_{\ell}(s)})- \bE[b(x,X_{\eta_{\ell}(s)})] \Big| \, {\mu}^{Z^{{\ell}}|\F_T^\cV}_{\eta_{\ell}(s)} (dx)\bigg]ds\\
&\hspace{1cm}+\int_{0}^{t} \bE\bigg[\int_{\R^d} \Big\|{\overline{\sigma}(x,\cV_{\eta_{\ell}(s)})- \bE[\sigma(x,X_{\eta_{\ell}(s)})]}\Big\| \, {\mu}^{Z^{{\ell}}|\F_T^\cV}_{\eta_{\ell}(s)}(dx)\bigg]ds\bigg).
\end{align*}
%If in addition, we assume \hkb, then we have 
%\begin{align*}
%\sup_{0\leq s\leq t} |\bE[G(x,Z_{s}^{L})]-\bE[G(x,X_{s})]|&\leq c\bigg(h_L+\int_{0}^{t} \bE\bigg[\sup_{x\in\RR^d} \Big|\overline{b}(x,\cV_{\eta_L(s)})- \bE[b(x,X_{\eta_L(s)})]
%\Big| \bigg] ds \\
%& \hspace{1cm}+\int_{0}^{t} \bE\bigg[\sup_{x\in\RR^d} \Big\| {\overline{\sigma}(x,\cV_{\eta_L(s)})- \bE[\sigma(x,X_{\eta_L(s)})]} \Big\| \bigg]ds\bigg).
%\end{align*}
\end{theorem}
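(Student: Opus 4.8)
The plan is to run a Talay--Tubaro/Bossy--Talay weak-error expansion, combining the backward Kolmogorov equation \eqref{eq:proofalphafinalnonintinteract} for $v_x$ with It\^o's formula along the Euler path $Z^{\ell}$. Fix a terminal time $s^\ast\in[0,t]$ and recall $v_x(s,z)=\bE[P(x,\cX^{s,z}_{s^\ast})]$, so that $v_x(s^\ast,\cdot)=P(x,\cdot)$ and, by the flow identity $\cX^{0,z}_{s^\ast}|_{z=X_0}=X_{s^\ast}$, one has $\int_{\Rd}v_x(0,z)\,\mu_0(dz)=\bE[P(x,X_{s^\ast})]$. Since $Z^{\ell}_0\sim\mu_0$, this gives $\bE[v_x(0,Z^{\ell}_0)]=\bE[P(x,X_{s^\ast})]$, while the terminal condition gives $\bE[v_x(s^\ast,Z^{\ell}_{s^\ast})]=\bE[P(x,Z^{\ell}_{s^\ast})]$. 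Applying It\^o to $s\mapsto v_x(s,Z^{\ell}_s)$, noting that the stochastic integral is a true martingale (its integrand involves $\nabla_z v_x$, bounded by $L$ via \hvdiffs, and $\overline\sigma$, integrable by Lemma \ref{lm:intergrabilitySDErandomcoeffinteract}), and substituting $\partial_s v_x+\cL_s v_x=0$ from the PDE, I obtain the representation
\[
\bE[P(x,Z^{\ell}_{s^\ast})]-\bE[P(x,X_{s^\ast})]=\bE\int_0^{s^\ast}\big(\cL^{\ell}_s-\cL_s\big)v_x(s,Z^{\ell}_s)\,ds,
\]
where $\cL_s$ is the generator of \eqref{eq:general1} with coefficients $b[\cdot,\mu^X_s]$, $\sigma[\cdot,\mu^X_s]$ evaluated at the running point $Z^{\ell}_s$, and $\cL^{\ell}_s$ is the Euler generator with the frozen random coefficients $\overline b(Z^{\ell}_{\eta_\ell(s)},\cV_{\eta_\ell(s)})$, $\overline\sigma(Z^{\ell}_{\eta_\ell(s)},\cV_{\eta_\ell(s)})$.

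Next I would split the integrand into a coefficient-mismatch part and an Euler-discretisation part. For the drift, write $\overline b(Z^{\ell}_{\eta_\ell(s)},\cV_{\eta_\ell(s)})-b[Z^{\ell}_s,\mu^X_s]$ as
\[
\underbrace{\overline b(Z^{\ell}_{\eta_\ell(s)},\cV_{\eta_\ell(s)})-b[Z^{\ell}_{\eta_\ell(s)},\mu^X_{\eta_\ell(s)}]}_{\text{mismatch}}+\underbrace{b[Z^{\ell}_{\eta_\ell(s)},\mu^X_{\eta_\ell(s)}]-b[Z^{\ell}_s,\mu^X_s]}_{\text{discretisation}},
\]
and analogously for the diffusion, using $\sigma[\cdot,\mu^X]\sigma[\cdot,\mu^X]^\top$ for the second-order term and telescoping $\overline\sigma\overline\sigma^\top-\sigma[\cdot]\sigma[\cdot]^\top=(\overline\sigma-\sigma[\cdot])\overline\sigma^\top+\sigma[\cdot](\overline\sigma-\sigma[\cdot])^\top$. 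The mismatch part, weighted by $\partial_{x_i}v_x$ (resp.\ $\partial^2_{x_i,x_j}v_x$), is bounded in absolute value by $L$ thanks to \hvdiffs. Conditioning on $\F^\cV_T$ and using that $\cV$ is independent of $(W,Z^{\ell}_0)$ (\hvb)---so that, given $\F^\cV_T$, the variable $Z^{\ell}_{\eta_\ell(s)}$ has regular conditional law $\mu^{Z^{\ell}|\F^\cV_T}_{\eta_\ell(s)}$ while $b[z,\mu^X_{\eta_\ell(s)}]=\bE[b(z,X_{\eta_\ell(s)})]$ is a deterministic function of $z$---turns $\bE\big|\overline b(Z^{\ell}_{\eta_\ell(s)},\cV_{\eta_\ell(s)})-b[Z^{\ell}_{\eta_\ell(s)},\mu^X_{\eta_\ell(s)}]\big|$ into exactly $\bE\big[\int_{\Rd}|\overline b(z,\cV_{\eta_\ell(s)})-\bE[b(z,X_{\eta_\ell(s)})]|\,\mu^{Z^{\ell}|\F^\cV_T}_{\eta_\ell(s)}(dz)\big]$, i.e.\ the two explicit integral terms in the claim; the extra factor $\overline\sigma^\top,\sigma[\cdot]$ from the diffusion telescoping is controlled in $L^2$ by the linear-growth bound \hvLip and the moment estimates of \hvb and Lemma \ref{lm:intergrabilitySDErandomcoeffinteract}, and absorbed into $c$.

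It remains to show the discretisation part contributes $O(h_\ell)$, which is the crux. I would split $b[Z^{\ell}_{\eta_\ell(s)},\mu^X_{\eta_\ell(s)}]-b[Z^{\ell}_s,\mu^X_s]$ into the measure-in-time increment $b[Z^{\ell}_s,\mu^X_{\eta_\ell(s)}]-b[Z^{\ell}_s,\mu^X_s]$, which is $O(h_\ell)$ directly because $b[\cdot,\mu^X_\cdot]$ is $C^1$ in time with bounded derivative (Lemma \ref{lm:regularityBinteract}), and the spatial-freezing increment $g(Z^{\ell}_{\eta_\ell(s)})-g(Z^{\ell}_s)$ with $g:=b[\cdot,\mu^X_{\eta_\ell(s)}]\in C^2_b$ (Lemma \ref{lm:regularityBinteract}). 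A plain Lipschitz bound on the latter yields only $|Z^{\ell}_s-Z^{\ell}_{\eta_\ell(s)}|=O(h_\ell^{1/2})$ (the strong rate, Lemma \ref{lm:regularityYinteract}); the weak rate $h_\ell$ must be recovered from the martingale structure. Applying It\^o to $g$ on $[\eta_\ell(s),s]$ writes $g(Z^{\ell}_s)-g(Z^{\ell}_{\eta_\ell(s)})$ as a bounded-variation term (of size $O(h_\ell)$ by boundedness of $\nabla g,D^2 g$ and the moment bounds) plus a martingale increment $M_s=\int_{\eta_\ell(s)}^s\nabla g(Z^{\ell}_r)^\top\overline\sigma(Z^{\ell}_{\eta_\ell(s)},\cV_{\eta_\ell(s)})\,dW_r$. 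The key observation is that when $M_s$ is paired with the weight $\partial_{x_j}v_x(s,Z^{\ell}_s)$, I replace the weight by its frozen value $\partial_{x_j}v_x(s,Z^{\ell}_{\eta_\ell(s)})$, which is $\F_{\eta_\ell(s)}$-measurable; since $\bE[M_s\mid\F_{\eta_\ell(s)}]=0$, the tower property kills this contribution, $\bE[\partial_{x_j}v_x(s,Z^{\ell}_{\eta_\ell(s)})M_s]=0$. The residual weight difference $\partial_{x_j}v_x(s,Z^{\ell}_s)-\partial_{x_j}v_x(s,Z^{\ell}_{\eta_\ell(s)})$ is bounded by $\|D^2 v_x\|_\infty|Z^{\ell}_s-Z^{\ell}_{\eta_\ell(s)}|$ (again \hvdiffs), so Cauchy--Schwarz against $M_s$, together with $(\bE|Z^{\ell}_s-Z^{\ell}_{\eta_\ell(s)}|^2)^{1/2}=O(h_\ell^{1/2})$ and $(\bE|M_s|^2)^{1/2}=O(h_\ell^{1/2})$, yields $O(h_\ell)$ after integrating over $s$. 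I stress that this route uses only the \emph{bounded first and second} spatial derivatives of $v_x$ supplied by \hvdiffs---no third derivatives and no time-regularity of $\nabla v_x$ are needed, which is precisely why that lemma suffices.

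The diffusion discretisation term is handled identically, with $D^2 v_x$ and $\sigma[\cdot,\mu^X]\sigma[\cdot,\mu^X]^\top$ in place of $\nabla v_x$ and $b[\cdot,\mu^X]$. Finally, all constants are uniform in $s^\ast$ and the integrands on the right-hand side are non-negative, so running the argument at each terminal time $s^\ast\in[0,t]$ and taking the supremum gives the stated inequality. The main obstacle throughout is exactly the $h_\ell^{1/2}\to h_\ell$ upgrade achieved by the frozen-weight/conditional-mean-zero argument; everything else reduces to the regularity and moment estimates already established.
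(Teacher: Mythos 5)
Your proposal is correct and follows the same overall architecture as the paper's proof: the backward Kolmogorov equation \eqref{eq:proofalphafinalnonintinteract} for $v_x$, It\^o's formula along the Euler path $Z^{\ell}$, cancellation of $\partial_s v_x+\cL_s v_x$, and the split of the residual generator difference into time-discretisation terms and coefficient-mismatch terms (the paper's $R_1,R_3$ versus $R_2,R_4$), with the mismatch terms converted into integrals against the regular conditional measure $\mu^{Z^{\ell}|\F^\cV_T}$ exactly as you describe. The one place where you genuinely diverge is the $O(h_\ell)$ bound on the discretisation terms. The paper applies It\^o's formula to $u\mapsto b_k[Z_u,\mu^X_u]$ on $[\eta_\ell(s),s]$ and removes the resulting martingale increment by appealing to Lemma \ref{lem:conmartingaleinteract}(a) while keeping the anticipating weight $\partial_{x_k}v_y(s,Z_s)$ in place; since that weight is not measurable with respect to any $\sigma$-algebra to which the martingale increment on $[\eta_\ell(s),s]$ is orthogonal, this step as written is not literally covered by the lemma. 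Your frozen-weight argument --- replace the weight by its $\F_{\eta_\ell(s)}$-measurable value so the tower property genuinely kills the cross term, then control the residual $\partial_{x_j}v_x(s,Z^{\ell}_s)-\partial_{x_j}v_x(s,Z^{\ell}_{\eta_\ell(s)})$ by $\|D^2v_x\|_\infty\,|Z^{\ell}_s-Z^{\ell}_{\eta_\ell(s)}|$ and Cauchy--Schwarz against the $O(h_\ell^{1/2})$ martingale --- is precisely the missing justification, and it has the merit of using only the first- and second-order bounds of \hvdiffs{}, no third derivatives. Everything else (Lemma \ref{lm:regularityBinteract} for the $O(h_\ell)$ time increment of $b[\cdot,\mu^X_\cdot]$, the telescoping of $\overline\sigma\,\overline\sigma^{\top}-\sigma\sigma^{\top}$ with the extra factor absorbed via the moment bounds, and uniformity in the terminal time) matches the paper; the only caveat, shared with the paper, is that absorbing the unbounded factor $\|\overline\sigma\|+\|\sigma\|$ into the constant while keeping the mismatch in the stated $L^1$ form implicitly uses more than plain Cauchy--Schwarz unless the coefficients are bounded.
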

\begin{proof}
To lighten the notation in this proof, we use $t_k$, $\eta(s)$ and $Z$ to denote $t^{\ell}_k$, $\eta_{\ell} (s)$ and $Z^{\ell}$ respectively. First, we observe that 
$$
	|\E[P(y,Z_{s})] - \E[P(y,X_{s})]| \leq \E |\E[P(y,Z_{s})|\F_T^{\cV}] -\E[P(y,X_{s})]|.
$$ 
From definition of $v(\cdot,\cdot)$ in \eqref{eq:defvinteract}, we compute that
\begin{eqnarray}
\E[v_y(0,X_0)]=\int_{\R^d} v_y(0,x) \, \mu_0(dx) &  = & \int_{\R^d}\E \big[ P(y,\cX^{0,x}_t) \big] \, \mu_0(dx) \nonumber \\
& = & \E \big[ \E[P(y,X_t) | X_0  ] \big].  \nonumber 
\end{eqnarray}
The Feynman-Kac theorem, hypothesis \hvb \,  and the fact that $\mu^X_0 = \mu^{Z}_0$ give 
\begin{align*}
	\bE[P(y,Z_{t})|\F_T^{\cV}] - \bE[P(y,X_t)] 
	&= \bE[v_y(t,Z_{t})|\F_T^{\cV}] - \bE[v_y(0, Z_0)] \\
	&= \bE[v_y(t,Z_{t})|\F_T^{\cV}] - \bE[v_y(0, Z_0)|\F_T^{\cV}] \\
	&= \sum_{k=0}^{n-1}\bE\big[v_y(t_{k+1}, Z_{k+1}) -v_y(t_{k}, Z_{k}) \,  \big| \, \F_T^{\cV} \big],
\end{align*} where $n=t/h_{\ell}$ \footnote{For simplicity we assume that $n$ is an integer.}. By It\^o's formula, 
\begin{eqnarray}
	&& \bE[v_y(t,Z_{t})|\F_T^{\cV}] - \bE[v_y(0, Z_0)] \nonumber \\ 
	& = & \sum_{k=0}^{n-1}\bE\bigg[\int_{t_{k}}^{t_{k+1}}\bigg(\partial_t v_y (s, Z_s) +\sum_{j=1}^d \partial_{x_j} v_y  (s,Z_s)\overline{b}_j(Z_{\eta(s)},\cV_{\eta(s)}) \nonumber \\
	&& +\dfrac{1}{2}\smsec\partial^2_{x_i,x_j} v_y  (s, Z_s)\overline{a}_{ij}(Z_{\eta(s)}, \cV_{\eta(s)})\bigg)ds \nonumber \\
	&& + \int_{t_{k}}^{t_{k+1}} \smfir\smfi
	\partial_{x_j} v_y   (s,Z_s)\overline{\sigma}_{ji}(Z_{\eta(s)}, \cV_{\eta(s)})dW_s^{(i)} \, \bigg| \,\F^\cV_T\bigg], \nonumber 
\end{eqnarray}
where $\overline{a}(x,\mu)=\overline{\sigma}(x,\mu)\overline{\sigma}(x,\mu)^T$. Condition (\hvdiffs), as well as hypotheses \hLip, \hLaw \, and  \hvb, along with Lemma \ref{lm:intergrabilitySDErandomcoeffinteract}  and part (a) of Lemma \ref{lem:conmartingaleinteract} (with  the filtration $\{\mathcal{F}_t \}_{t \in [0,T]}$ such that $\mathcal{F}_t = \sigma( \F_T^\cV, \{ W_u \}_{0 \leq u \leq t}, \{ Z_u \}_{0 \leq u \leq t})$) imply that
\begin{equation}\label{eq:proofalphafinalnonint2interact}
	\bE\bigg[\int_{t_{k}}^{t_{k+1}}  \smfir\smfi\partial_{x_j} v_y  (s,Z_s)\overline{\sigma}_{ji}(Z_{\eta(s)}, \cV_{\eta(s)})dW_s^{(i)} \, \bigg| \, \F_T^{\cV}\bigg]=0.
\end{equation}
Subsequently, using the fact that $v(\cdot,\cdot)$ satisfies PDE \eqref{eq:proofalphafinalnonintinteract}, we have
\begin{eqnarray}
	&& \bE[v_y(t,Z_{t})|\F_T^{\cV}] - \bE[v_y(0, Z_0)] \nonumber \\
	& = & \sum_{k=0}^{n-1}\int_{t_{k}}^{t_{k+1}}\bE\bigg[\sum_{j=1}^d\partial_{x_j} v_y  (s,Z_s)(\overline{b}_j(Z_{\eta(s)},\cV_{\eta(s)})-b_j[Z_s, \mu^X_s]) \nonumber \\
	&& +\dfrac{1}{2}\smsec\partial^2_{x_i,x_j} v_y  (s, Z_s)(\overline{a}_{ij}(Z_{\eta(s)}, \cV_{\eta(s)})-a_{ij}[Z_s, \mu^X_s] ) \,  \bigg| \, \F^\cV_T\bigg]ds. \nonumber
\end{eqnarray}
 Hence,
\begin{align*}
	\bE[v_y(t,Z_{t})|\F_T^{\cV}] - \bE[v_y(0, Z_0)]
	&=\sum_{k=0}^{n-1}\int_{t_{k}}^{t_{k+1}}\bE \bigg[ \sum_{i=1}^4 R_i(s) \bigg| \F_T^{\cV} \bigg]ds,
	\end{align*}	
	where 
\begin{align*}
	R_1(s)&\defeq  \smfir\partial_{x_j} v_y  (s,Z_s)(b_j[Z_{\eta(s)}, \mu^X_{\eta(s)}]-b_j[Z_{s}, \mu^X_{s}]) \\
	R_2(s)&\defeq\smfir\partial_{x_j} v_y  (s,Z_s)(\overline{b}_j(Z_{\eta(s)},\cV_{\eta(s)})-b_j[Z_{\eta(s)}, \mu^X_{\eta(s)}])  \\
	R_3(s)&\defeq\frac{1}{2}\smsec\partial^2_{x_i,x_j} v_y (s,Z_s)(a_{ij}[Z_{\eta(s)}, \mu^X_{\eta(s)}]-a_{ij}[Z_{s}, \mu^X_{s}]) \\
	R_4(s)&\defeq \frac{1}{2}\smsec\partial^2_{x_i,x_j} v_y (s,Z_s)(\overline{a}_{ij}(Z_{\eta(s)},\cV_{\eta(s)})-a_{ij}[Z_{\eta(s)}, \mu^X_{\eta(s)}]).
\end{align*}
\paragraph*{Error $R_1$:}
Let $\mathcal{F}^{Z}_T$ be the sigma-algebra generated by $\{Z_t\}_{t \in [0,T]}$.  From part (a) of Lemma \ref{lem:conmartingaleinteract} and the It\^{o}'s formula, we have  \small
{\begin{eqnarray}
&& \bE[R_1(s)|\F^\cV_T] \nonumber \\
&= & \sum_{k=1}^d \bE \bigg[ \partial_{x_k} v_y  (s,Z_{s}) \bE \bigg[ \int^{s}_{\eta(s)} \bigg[ \partial_{u} b_k [Z_{u}, \mu^X_u] + \sum_{i=1}^d \partial_{x_i} b_k [Z_{u}, \mu^X_u] \overline{b}_i (Z_{\eta(u)}, \mathcal{V}_{\eta(u)}) + \nonumber \\
&& + \frac{1}{2} \sum_{i,j=1}^d \partial^2_{x_i,x_j} b_k [Z_{u}, \mu^X_u] \overline{a}_{ij}(Z_{\eta(u)}, \mathcal{V}_{\eta(u)}) \bigg] \,du \,  \bigg| \sigma(\mathcal{F}^{Z}_T,\F^\cV_T) \bigg] \, \bigg| \F^\cV_T \bigg]. \nonumber
\end{eqnarray}}
\normalsize{Condition (\hvdiffs) \, and the conditional Jensen inequality imply that} \small
{\begin{eqnarray}
&&	\bE \big|\bE[R_1(s)|\F^\cV_T] \big| \nonumber \\
& \leq &  c \sum_{k=1}^d \bigg(  \int^{s}_{\eta(s)} \bE\bigg| \partial_{u} b_k [Z_{u}, \mu^X_u]+ \sum_{i=1}^d \partial_{x_i} b_k [Z_{u}, \mu^X_u] \overline{b}_i (Z_{\eta(u)}, \mathcal{V}_{\eta(u)}) + \nonumber \\
    &&    \frac{1}{2} \sum_{i,j=1}^d \partial^2_{x_i,x_j} b_k [Z_{u}, \mu^X_u] \overline{a}_{ij}(Z_{\eta(u)}, \mathcal{V}_{\eta(u)})  \bigg| du\bigg). \label{eq:conditionalcauchy}
\end{eqnarray}}
\normalsize{Using these two bounds along with Lemma \ref{lm:regularityBinteract} and assumption \hvLip, we can see that 
$$
\bE \big|\bE[R_1(s)|\F^\cV_T] \big|  \leq c \bigg( \int_{\eta(s)}^s 1+ \sup_{s' \in [0,t]} \bE |Z_{s'}|^{2} +  \sup_{s' \in [0,t]} \bE \bigg| \int_{\bR^d} |x|^{2} \mathcal{V}_{s'} (dx) \bigg| \, du
\bigg) .
$$}
Assumptions \hLip, \, \hLaw \, and \hvb \,  allow us to conclude that
\[
	\sup_{0\leq s\leq t}\bE|\bE[R_1(s)|\F^\cV_T]|\leq c h_{\ell}.
\]

\paragraph*{Error $R_2$:} Condition (\hvdiffs)\ implies that
$$
\big|\bE[R_2(s)|\F^\cV_T] \big| \leq c\ \bE \big[|b[Z_{\eta(s)}, \mu^X_{\eta(s)}] -\overline{b}(Z_{\eta(s)},\cV_{\eta(s)})| \, \big|\F_T^\cV \big]. \label{R2bound}
$$ 
Using the notation of regular conditional probability measures,
\begin{gather}  \bE |\bE[R_2(s)|\F^\cV_T]|  \leq c \,  \bE \bigg[\int_{\R^d} \big| \bE[b(x,X_{\eta(s)})] - \overline{b}(x,\cV_{\eta(s)}) \big|\, {\mu}^{Z|\F_T^\cV}_{\eta(s)}(dx)\bigg]. \nonumber \end{gather}
Similarly, by the condition on the second-order derivatives from (\hvdiffs), we can establish that \begin{equation} \label{eq:secondderivative1}
\sup_{0\leq s\leq T}\bE|\bE[R_3(s)|\F^\cV_T]|\leq c h_{\ell} \end{equation}
and 
\begin{equation} \label{eq:secondderivative2}  |\bE[R_4(s)|\F^\cV_T]|  \leq  c \,  \bE \big[  \big\| \sigma[Z_{\eta(s)}, \mu^X_{\eta(s)}] - \overline{\sigma}(Z_{\eta(s)},\cV_{\eta(s)})  \big\| \, \big|\F_T^\cV \big] . \end{equation}

\end{proof}
Next, we introduce an artificial process $\bar{Z}^{\ell}$ in order to remove the dependence of $Z^{\ell}$ on $\F^\cV_T$. Note that ${\mu}^{Z^{{\ell}}|\F_T^\cV}_{\eta_{\ell}(s)}$ is a random measure, whereas $\mu^{\bar{Z}^{\ell}}_{\eta_{\ell}(s)}$ is non-random. This is crucial in the iteration that will be discussed in the next section. 

\begin{lemma} \label{lm:weakerror1nonintinteract}
Let $P \in C^{0,2}_{b,b}( \bR^d \times \bR^d, \bR)$ be a Lipschitz continuous function. Assume that \hkreg, \hLaw,  \hvb \, and \hvLip~hold. 
Then there exists a constant $c$ $($independent of the choices of $L$ and $N_1, \ldots, N_L)$ such that for each $t\in[0,T]$, $\ell \in \{0, \ldots, L \}$ and $x\in\R^d$,
\begin{eqnarray}
&& \sup_{0\leq s\leq t} \bE \Big[ |\bE[P(x,Z_{s}^{\ell})  |\F_T^\cV ]-\bE[P(x,X_{s})]|^2 \Big] \nonumber \\
&\leq & c\bigg(h_{\ell}^2+\int_{0}^{t}\bigg[ \int_{\R^d} \bE|\overline{b}(x,\cV_{\eta_{\ell}(s)})- \bE[b(x,X_{\eta_{\ell}(s)})]|^2  \mu^{\bar{Z}^{\ell}}_{\eta_{\ell}(s)} (dx)\bigg]ds \nonumber \\
&& + \int_{0}^{t}\bigg[\int_{\R^d} \bE \Big\|\overline{\sigma}(x,\cV_{\eta_{\ell}(s)})- \bE[ \sigma (x,X_{\eta_{\ell}(s)})] \Big\|^2 \mu^{\bar{Z}^{\ell}}_{\eta_{\ell}(s)} (dx)\bigg]ds \bigg), \nonumber 
\end{eqnarray}
where $\bar{Z}^{{\ell}}$ is a process defined by
$$ d \bar{Z}^{{\ell}}_t = \int_{\bR^d} b (\bar{Z}^{{\ell}}_{\eta_{\ell}(t)}, y)  \, \mu^{X}_{\eta_{\ell}(t)} (dy)  \, dt + \int_{\bR^d} \sigma (\bar{Z}^{{\ell}}_{\eta_{\ell}(t)} ,y) \,  \mu^{X}_{\eta_{\ell}(t)} (dy) \, dW_t.$$
\end{lemma}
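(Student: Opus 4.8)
The plan is to reuse the conditional weak-error expansion already derived in the proof of Theorem \ref{lm:conditionalwknonintmaininteract} and upgrade it from an $L^1(\Omega)$ estimate on the conditional error to an $L^2(\Omega)$ estimate, while simultaneously replacing the \emph{random} conditional law ${\mu}^{Z^{\ell}|\F_T^\cV}$ by the \emph{deterministic} law $\mu^{\bar Z^{\ell}}$. Concretely, for a fixed terminal time $s\in[0,t]$ and fixed $x$, that proof yields the representation $\bE[P(x,Z^{\ell}_s)\mid\F_T^\cV]-\bE[P(x,X_s)]=\int_0^s\bE[R_1(u)+R_2(u)+R_3(u)+R_4(u)\mid\F_T^\cV]\,du$, with $R_1,R_3$ the time-regularity errors and $R_2,R_4$ the coefficient-discrepancy errors. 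First I would square this identity, use $(\sum_{i=1}^4 a_i)^2\le 4\sum_i a_i^2$ and the Cauchy--Schwarz inequality in the time integral, and take $\bE[\cdot]$, which reduces the whole estimate to controlling $\int_0^s\bE\big[|\bE[R_i(u)\mid\F_T^\cV]|^2\big]\,du$ for each $i$.

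For $R_1$ and $R_3$ I would redo, in $L^2$, the bounds from the proof of Theorem \ref{lm:conditionalwknonintmaininteract}. There $|\bE[R_1(u)\mid\F_T^\cV]|$ is dominated by a conditional expectation of an integral over $[\eta_\ell(u),u]$ whose integrand is $L^2$-bounded uniformly (by \hkreg and \hvLip for the bounded derivatives, Lemma \ref{lm:regularityBinteract} for the time-regularity of $b[\cdot,\mu^X_\cdot]$ and $\sigma[\cdot,\mu^X_\cdot]$, and Lemma \ref{lm:intergrabilitySDErandomcoeffinteract} together with \hvb for the moment bounds). Conditional Jensen then gives $\bE[|\bE[R_1(u)\mid\F_T^\cV]|^2]\le c\,h_\ell\!\int_{\eta_\ell(u)}^u(\cdots)\,dr\le c\,h_\ell^2$, and likewise for $R_3$; integrating in time absorbs these into the $h_\ell^2$ term. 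For $R_2$ and $R_4$, the derivative bound \hvdiffs together with conditional Jensen gives $\bE[|\bE[R_2(u)\mid\F_T^\cV]|^2]\le c\,\bE|D_b(Z^{\ell}_{\eta_\ell(u)},\cV_{\eta_\ell(u)})|^2$, where I write $D_b(x,\cV_r):=\overline{b}(x,\cV_r)-b[x,\mu^X_r]$ and $D_\sigma$ analogously, using $\bE[b(x,X_r)]=b[x,\mu^X_r]$. The entire problem thus reduces to proving $\int_0^s g(u)\,du\le c\int_0^s G(u)\,du$, where $g(u):=\bE|D_b(Z^{\ell}_{\eta_\ell(u)},\cV)|^2+\bE\|D_\sigma(Z^{\ell}_{\eta_\ell(u)},\cV)\|^2$ evaluates the discrepancy along $Z^{\ell}$, and $G(u):=\int_{\R^d}\bE\big(|D_b(x,\cV_{\eta_\ell(u)})|^2+\|D_\sigma(x,\cV_{\eta_\ell(u)})\|^2\big)\,\mu^{\bar Z^{\ell}}_{\eta_\ell(u)}(dx)$ is precisely the target integrand appearing on the right-hand side of the Lemma. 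The key point, which motivates introducing $\bar Z^{\ell}$, is that $G(u)=\bE\big(|D_b(\bar Z^{\ell}_{\eta_\ell(u)},\cV)|^2+\|D_\sigma(\bar Z^{\ell}_{\eta_\ell(u)},\cV)\|^2\big)$ exactly because $\bar Z^{\ell}$ is independent of $\cV$.

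The main obstacle is this last reduction: passing from the $\cV$-coupled law of $Z^{\ell}$ to the deterministic law of $\bar Z^{\ell}$. The naive route of comparing $|D_b|^2$ at $Z^{\ell}$ and at $\bar Z^{\ell}$ through its \emph{local} Lipschitz constant is too lossy, since it pulls in linear growth factors and, after inserting the strong error $\bE|Z^{\ell}_{\eta_\ell(u)}-\bar Z^{\ell}_{\eta_\ell(u)}|^2\le c\int_0^u g$, produces a square root $(\int_0^u g)^{1/2}$ and hence a nonlinear inequality that fails to preserve smallness. Instead I would exploit that $D_b(\cdot,\cV)$ and $D_\sigma(\cdot,\cV)$ are \emph{globally} Lipschitz in $x$ with constant independent of $\cV$ (both $\overline{b}(\cdot,\cV),b[\cdot,\mu^X]$ are Lipschitz by \hvLip and \hkreg). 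Coupling $Z^{\ell}$ and $\bar Z^{\ell}$ through the same Brownian motion and initial value and applying this Lipschitz bound at the \emph{first} power gives $g(u)\le c\,G(u)+c\,\bE|Z^{\ell}_{\eta_\ell(u)}-\bar Z^{\ell}_{\eta_\ell(u)}|^2$. Since $Z^{\ell}$ and $\bar Z^{\ell}$ are frozen Euler schemes on the same grid whose drift and diffusion differ only by $D_b,D_\sigma$ evaluated along $Z^{\ell}$ plus a Lipschitz remainder, the standard strong-stability estimate yields $\bE\sup_{r\le u}|Z^{\ell}_r-\bar Z^{\ell}_r|^2\le c\int_0^u g(r)\,dr$. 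Combining these gives the \emph{linear} inequality
\[
  g(u)\le c\,G(u)+c\int_0^u g(r)\,dr,
\]
so Gronwall's lemma delivers $\int_0^s g\le c\int_0^s G$ with no spurious additive term.

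Finally I would collect the four contributions: $R_1,R_3$ give $c\,h_\ell^2$, while $R_2,R_4$ give $c\,s\int_0^s g\le c\int_0^t G$ after the Cauchy--Schwarz factor $s\le T$ is absorbed into the constant and the Gronwall bound is applied. This matches exactly the right-hand side of the Lemma, with the two discrepancy integrals against $\mu^{\bar Z^{\ell}}$. Because every constant produced above is uniform in $s$ (and independent of $L$ and $N_1,\dots,N_L$), taking $\sup_{0\le s\le t}$ at the end is immediate, completing the argument.
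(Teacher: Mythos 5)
Your proposal is correct and follows essentially the same route as the paper's proof: starting from the conditional weak-error decomposition of Theorem \ref{lm:conditionalwknonintmaininteract}, upgrading to $L^2$ via conditional Jensen, and handling the $R_2,R_4$ terms by comparing $Z^{\ell}$ with the auxiliary process $\bar Z^{\ell}$ through the global Lipschitz property of the coefficient discrepancy, a strong-stability estimate, Gronwall's lemma, and the independence of $\bar Z^{\ell}$ from $\cV$ to replace the random conditional law by $\mu^{\bar Z^{\ell}}$. The only cosmetic difference is that you run Gronwall on the unconditional quantity $g(u)$ whereas the paper works conditionally on $\F_T^\cV$ and takes expectations at the end.
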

\begin{proof}
\normalsize{As in the proof of Theorem \ref{lm:conditionalwknonintmaininteract}, we use $\eta(s)$, $Z$ and $\bar{Z}$  to denote $\eta_{\ell}(s)$, $Z^{\ell}$ and $\bar{Z}^{\ell}$ respectively. By \hLip \, and \hvLip, 
\begin{align} \label{R2decomposition}
& \bE \Big[ \Big| \big( b[Z_{\eta(s)}, \mu^X_{\eta(s)}]-\overline{b}(Z_{\eta(s)},\cV_{\eta(s)}) \big) - \big( b[\bar{Z}_{\eta(s)}, \mu^X_{\eta(s)}]-\overline{b}({\bar{Z}}_{\eta(s)},\cV_{\eta(s)}) \big) \Big|^2 \, \Big|\F_T^\cV \Big]  \nonumber\\
&  \leq  c \, \bE \big[ \big| Z_{\eta(s)} - {\bar{Z}}_{\eta(s)}\big|^2 \, \big| \F_T^\cV \big]. 
\end{align}
We further decompose the error as follows.}
\footnotesize {
\begin{eqnarray}
\bE \big[ \big| Z_{\eta(s)} - {\bar{Z}}_{\eta(s)}\big|^2 \, \big| \F_T^\cV \big] & \leq &  2 \Bigg( \bE \bigg[ \bigg| \int_0^s  \bigg( b[\bar{Z}_{\eta(u)}, \mu^X_{\eta(u)}] - \overline{b} \big( {{Z}}_{\eta(u)},  \cV_{\eta(u)} \big) \bigg) \,du \bigg|^2  \, \bigg| \F_T^\cV  \bigg]  \nonumber \\
&& + \bE \bigg[ \bigg| \int_0^s  \bigg(  \sigma[\bar{Z}_{\eta(u)}, \mu^X_{\eta(u)}]  - \overline{\sigma}  \big( {{Z}}_{\eta(u)},  \cV_{\eta(u)} \big) \bigg) \,dW_u \bigg|^2  \, \bigg| \F_T^\cV  \bigg] \Bigg)  \nonumber \\
& =:& 2(R_{21} (s) + R_{22} (s)). \nonumber \end{eqnarray}}
\normalsize{By the conditional Fubini's theorem and the Cauchy-Schwarz inequality, there exists a constant $K>0$ such that} \footnotesize{
\begin{eqnarray}
&& R_{21} (s) \nonumber \\
& \leq & c \,  \bigg( \int_0^s \bE \bigg[ \bigg| b[\bar{Z}_{\eta(u)}, \mu^X_{\eta(u)}] - \overline{b} \big( {\bar{Z}}_{\eta(u)},  \cV_{\eta(u)} \big) \bigg|^2 \bigg| \F_T^\cV  \bigg] \nonumber \\
&& + \bE \bigg[ \bigg| \overline{b} \big( {\bar{Z}}_{\eta(u)},  \cV_{\eta(u)} \big)  - \overline{b} \big( {{Z}}_{\eta(u)},  \cV_{\eta(u)} \big) \bigg|^2 \bigg| \F_T^\cV  \bigg]  \,du \bigg) \nonumber \\
&  \leq & c \,  \bigg( \int_0^s \bE \bigg[ \bigg| b[\bar{Z}_{\eta(u)}, \mu^X_{\eta(u)}] - \overline{b} \big( {\bar{Z}}_{\eta(u)},  \cV_{\eta(u)} \big) \bigg|^2 \bigg| \F_T^\cV  \bigg] + \bE \big[ \big| Z_{\eta(u)} - {\bar{Z}}_{\eta(u)}\big|^2 \big| \F_T^\cV \big] \,du \bigg), \nonumber
\end{eqnarray}}
\normalsize{where assumption \hvLip~ is used in the final inequality.
Since $\bar{Z}$ is independent of $ \F_T^\cV $ and that $\mu^{X}_{\eta(u)}$ is a non-random measure, we use the properties of regular conditional distributions as outlined in Theorem 7.1 of \cite{parthasarathy1967probability}  to prove that for each $\omega \in \Omega$,} \footnotesize{
\begin{eqnarray}
&&\Bigg( \bE \bigg[ \bigg| b[\bar{Z}_{\eta(u)}, \mu^X_{\eta(u)}] - \overline{b} \big( {\bar{Z}}_{\eta(u)}, \cV_{\eta(u)} \big) \bigg|^2 \bigg| \F_T^\cV  \bigg] \Bigg)(\omega) \nonumber \\
& = &    \int_{\bR^d} \bigg| b[x, \mu^X_{\eta(u)}] - \overline{b} ( x,  \cV_{\eta(u)} (\omega)) \bigg|^2 \, \mu^{\bar{Z}}_{\eta(u)}  (dx).  \nonumber 
\end{eqnarray}}
\normalsize{Therefore,} \footnotesize{
$$ R_{21} (s) \leq c \bigg( \int_0^s \bigg[ \bE \big[ \big| Z_{\eta(u)} - {\bar{Z}}_{\eta(u)}\big|^2 \big| \F_T^\cV \big] + \int_{\bR^d} \bigg| b[x, \mu^X_{\eta(u)}] - \overline{b} ( x,  \cV_{\eta(u)} ) \bigg|^2 \, \mu^{\bar{Z}}_{\eta(u)}  (dx) \bigg] \, du \bigg). $$}
\normalsize{We proceed similarly as $R_{22} (s)$ and apply part (b) of Lemma \ref{lem:conmartingaleinteract} (with  the filtration $\{\mathcal{F}_t \}_{t \in [0,T]}$ such that $\mathcal{F}_t = \sigma( \F_T^\cV, \{ W_u \}_{0 \leq u \leq t}, Z_0)$) to get} \footnotesize{
$$ R_{22} (s) \leq c \bigg( \int_0^s \bigg[ \bE \big[ \big| Z_{\eta(u)} - {\bar{Z}}_{\eta(u)}\big|^2 \big| \F_T^\cV \big] + \int_{\bR^d} \bigg\| \sigma[x, \mu^X_{\eta(u)}] - \overline{\sigma} ( x,  \cV_{\eta(u)} ) \bigg\|^2 \, \mu^{\bar{Z}}_{\eta(u)}  (dx) \bigg] \, du \bigg). $$}
\normalsize{Combining both bounds gives} \footnotesize{
\begin{eqnarray}
 \bE \big[ \big| Z_{\eta(s)} - {\bar{Z}}_{\eta(s)}\big|^2 \big| \F_T^\cV \big] & \leq &  c \bigg( \int_0^s \bigg[ \bE \big[ \big| Z_{\eta(u)} - {\bar{Z}}_{\eta(u)}\big|^2 \big| \F_T^\cV \big] \nonumber \\
 && + \int_{\bR^d} \bigg| b[x, \mu^X_{\eta(u)}] - \overline{b} ( x,  \cV_{\eta(u)} ) \bigg|^2 \, \mu^{\bar{Z}}_{\eta(u)}  (dx)   \nonumber \\
&& + \int_{\bR^d} \bigg\| \sigma[x, \mu^X_{\eta(u)}] - \overline{\sigma}  ( x,  \cV_{\eta(u)} ) \bigg\|^2 \, \mu^{\bar{Z}}_{\eta(u)}  (dx) \bigg] \, du \bigg), 
 \nonumber
\end{eqnarray}}
\normalsize{for any $s \in [0,t]$. 
By Gronwall's lemma and integration  from $0$ to $t$ in time, we obtain that} \footnotesize{ 
\begin{eqnarray}
\int_0^t \bE \big[ \big| Z_{\eta(s)} - {\bar{Z}}_{\eta(s)}\big|^2 \big| \F_T^\cV \big] \, ds & \leq & c \bigg( \int_0^t \bigg[ \int_{\bR^d} \bigg| b[x, \mu^X_{\eta(s)}] - \overline{b} ( x,  \cV_{\eta(s)} ) \bigg|^2 \, \mu^{\bar{Z}}_{\eta(s)}  (dx) 
 \nonumber\\
 && + \int_{\bR^d} \bigg\| \sigma[x, \mu^X_{\eta(s)}]  - \overline{\sigma}  ( x,  \cV_{\eta(s)} ) \bigg\|^2 \, \mu^{\bar{Z}}_{\eta(s)} (dx) \bigg] \, ds \bigg). \nonumber 
\end{eqnarray}}
\normalsize{By (\ref{R2bound}) and (\ref{R2decomposition}), it is clear that} \footnotesize{
\begin{eqnarray}
\int_0^t |\bE[R_2(s)|\F_T^V]|^2 \,ds & \leq & c \bigg( \int_0^t \bE \big[ \big| Z_{\eta(s)} - {\bar{Z}}_{\eta(s)}\big|^2 \big| \F_T^\cV \big] \nonumber \\
&& + \bE \big[|b[\bar{Z}_{\eta(s)}, \mu^X_{\eta(s)}] -\overline{b}({\bar{Z}}_{\eta(s)},\cV_{\eta(s)}) |^2 \, \big| \F_T^\cV \big] \, ds \bigg). \nonumber 
\end{eqnarray} }
\normalsize{This shows that} \footnotesize{
\begin{eqnarray}
\int_0^t |\bE[R_2(s)|\F_T^{\cV}]|^2 \,ds & \leq & c \bigg( \int_0^t \bigg[ \int_{\bR^d} \bigg| b[x, \mu^X_{\eta(s)}]  - \overline{b} ( x,  \cV_{\eta(s)} ) \bigg|^2 \, \mu^{\bar{Z}}_{\eta(s)} (dx) 
 \nonumber\\
 && + \int_{\bR^d} \bigg\| \sigma[x, \mu^X_{\eta(s)}]  - \overline{\sigma}  ( x,  \cV_{\eta(s)} ) \bigg\|^2 \, \mu^{\bar{Z}}_{\eta(s)} (dx) \bigg] \, ds \bigg). \nonumber 
\end{eqnarray} }
\normalsize{We repeat the same argument for $R_4(s)$ and conclude that} \footnotesize{
\begin{eqnarray}
\int_0^t |\bE[R_4(s)|\F_T^{\cV}]|^2 \,ds & \leq & c \bigg( \int_0^t \bigg[ \int_{\bR^d} \bigg| b[x, \mu^X_{\eta(s)}]  - \overline{b} ( x,  \cV_{\eta(s)} ) \bigg|^2 \, \mu^{\bar{Z}}_{\eta(s)} (dx) 
 \nonumber\\
 && + \int_{\bR^d} \bigg\| \sigma[x, \mu^X_{\eta(s)}]  - \overline{\sigma}  ( x,  \cV_{\eta(s)} ) \bigg\|^2 \, \mu^{\bar{Z}}_{\eta(s)}(dx) \bigg] \, ds \bigg). \nonumber 
\end{eqnarray}}
\end{proof}
\section{Iteration of the MLMC algorithm}

\subsection{Interacting kernels}
Fix $m \geq 1$ and correspond each particle $Z^{i, \ell}$ in the abstract framework with $Y^{i,m, \ell}$ defined in \eqref{eq:contiousYinteract} and   $\F_T^\cV$  with the sigma-algebra $\mathcal{F}^{m-1}$ generated by all the particles $Y^{i,m-1,\ell}$ in the $(m-1)$th Picard step, $0 \leq \ell \leq L,1 \leq i \leq N_{m-1,\ell}$.  We set $\cV_t :=  \cM^{(m-1)}_t$ (defined in \eqref{eq:defMLMCop}),  $\overline{b}(x,\mu):= b[x,\mu]$ and $\overline{\sigma}(x,\mu):=\sigma[x,\mu]$, so that
\[
 \overline{b}(x, \mathcal{M}^{(m-1)}_t ) = \langle \mathcal{M}^{(m-1)}_t,   b(x, \cdot) \rangle  \quad \text{and} \quad \overline{\sigma}(x, \mathcal{M}^{(m-1)}_t ) = \langle \mathcal{M}^{(m-1)}_t,   \sigma(x, \cdot) \rangle, 
\]
for each $x \in \bR^d$. The measure $\mathcal{M}^{(m-1)}$ satisfies the independence criterion in \hvb, since $\{Y^{m-1}\} \perp (W^{m},Z_0^m)$.  The criteria \hvb, \hvreg \,  and \hvLip~ are verified below. 

In the results of this section, $c$ denotes a generic constant that depends on $T$, but not on $m$,$\ell$ or $N_{m,\ell}$.

\begin{lemma}[Verification of \hvLip] \label{lm:Regularitycoeffiinteract}
Assume \hLip \, and \hLaw. Then, for each $t\in[0,T]$, there exists a constant $c$ such that for all $x_1,x_2\in\R^d$
\[
	|   \langle \mathcal{M}^{(m-1)}_{t} ,  b( x_1, \cdot)  - b(x_2, \cdot) \rangle| +\norm{ \langle \mathcal{M}^{(m-1)}_{t} ,  \sigma( x_1, \cdot)  - \sigma(x_2, \cdot) \rangle }\leq c|x_1-x_2|, 
\]
\[
	| \langle \mathcal{M}^{(m-1)}_{t}b(x_1, \cdot)  \rangle | + \norm{\langle \mathcal{M}^{(m-1)}_{t}\sigma(x_1, \cdot)  \rangle} \leq c \bigg( 1+ |x|+  \bigg| \int_\Rd |y|\mathcal{M}^{(m-1)}_{t}(dy)  \bigg|\bigg)\,.
	\]
\end{lemma}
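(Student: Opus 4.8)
The plan is to read both inequalities off the global Lipschitz property \hLip of the kernels (which is assumed here, together with the $\int|y|$-integrability provided by \hLaw), combined with the explicit structure of the signed measure $\mathcal{M}^{(m-1)}_t$. First I would record two structural facts about $\mathcal{M}^{(m-1)}_t$. From its definition in \eqref{eq:defMLMCop} it is a finite signed measure, being a finite sum of differences of time-interpolated empirical measures, so it has finite total variation. Moreover its total mass equals $1$: every interpolated empirical measure has mass $1$ (a linear interpolant of two mass-$1$ measures has mass $1$), so the level masses telescope and only the $\ell=0$ contribution survives, since $\widetilde\mu^{Y^{m-1,-1},\cdot}_t:=0$. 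These two facts are the only structural inputs I need.

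For the Lipschitz estimate I would expand $\langle \mathcal{M}^{(m-1)}_t, b(x_1,\cdot)-b(x_2,\cdot)\rangle$ level by level. By \hLip the map $y\mapsto b(x_1,y)-b(x_2,y)$ is bounded, uniformly in $y$, by a constant multiple of $|x_1-x_2|$, so each integral against an interpolated empirical (probability) measure is at most that multiple of $|x_1-x_2|$; summing the finitely many level contributions yields a bound of the form $c|x_1-x_2|$. The same argument applies verbatim to $\sigma$. This is the cruder of the two possible arguments, but it is the one that produces a \emph{deterministic} constant, which is exactly what the right-hand side $c|x_1-x_2|$ demands (the sharper mixed-derivative/strong-convergence estimate would only give a random Lipschitz coefficient).

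For the growth estimate I would split $b(x_1,y)=b(x_1,0)+\big(b(x_1,y)-b(x_1,0)\big)$. The constant part integrates to $b(x_1,0)\,\langle \mathcal{M}^{(m-1)}_t,\mathbf 1\rangle=b(x_1,0)$ by the total-mass-$1$ identity, and $|b(x_1,0)|\le c(1+|x_1|)$ since \hLip forces linear growth in the first argument. The remaining part is controlled by $|b(x_1,y)-b(x_1,0)|\le c|y|$ (Lipschitz in the second argument), which ties the bound to the first moment $\int_{\R^d}|y|\,\mathcal{M}^{(m-1)}_t(dy)$. Adding the two pieces, and then repeating the whole computation for $\sigma$, gives the asserted growth bound $c\big(1+|x_1|+|\int_{\R^d}|y|\,\mathcal{M}^{(m-1)}_t(dy)|\big)$.

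The main obstacle is that $\mathcal{M}^{(m-1)}_t$ is a \emph{signed} measure rather than a genuine law, so the convexity/Jensen shortcuts available for probability measures do not apply and one must track the total-variation bookkeeping across levels explicitly. The delicate point in the growth bound is relating the second-argument growth of the kernels to the prescribed moment functional despite possible cancellations in the signed integral; this is handled precisely by first peeling off the value at $y=0$ through the total-mass identity and then estimating the remainder by the (absolute) first moment. I would also flag that, in contrast to the constants appearing in the refined weak-error estimates, the constant produced here is allowed to depend on the fixed number of levels, since verifying \hvLip only requires the \emph{existence} of finite Lipschitz and growth constants for the abstract SDE theory of Section~\ref{sec:aa} to be applicable.
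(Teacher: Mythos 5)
Your argument for the first (Lipschitz) inequality is exactly the paper's: the paper expands $\langle \mathcal{M}^{(m-1)}_t, b(x_1,\cdot)-b(x_2,\cdot)\rangle$ into the weighted sum over levels and particles of terms $b(x_1,Y^{i,m-1,\ell}_{\cdot})-b(x_2,Y^{i,m-1,\ell}_{\cdot})$ and invokes \hLip termwise, with the constant absorbing the (finite) number of levels; your ``deterministic constant'' remark and your observation that each interpolated empirical measure is a probability measure are consistent with this. The paper gives no details at all for the second (growth) inequality, so your peeling argument is a genuine addition rather than a restatement.

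However, your growth argument has a real gap at its last step. After peeling off $b(x_1,0)$ via the total-mass-one identity, you bound the remainder using $|b(x_1,y)-b(x_1,0)|\le c|y|$ and integrate against the \emph{signed} measure. This only yields $c\int_{\R^d}|y|\,|\mathcal{M}^{(m-1)}_t|(dy)$, i.e.\ the first moment of the \emph{total variation} measure, not $c\,|\int_{\R^d}|y|\,\mathcal{M}^{(m-1)}_t(dy)|$ as the lemma is written: cancellations can make the signed moment $\int|y|\,\mathcal{M}^{(m-1)}_t(dy)$ vanish while $\langle\mathcal{M}^{(m-1)}_t,b(x,\cdot)\rangle$ is large (take, schematically, $\mathcal{M}=\delta_{u}+\delta_{v}-\delta_{w}$ with $|u|+|v|=|w|$ but $u+v-w$ large, and $b(x,y)=y$). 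Your closing claim that the peeling ``handles precisely'' the cancellation issue is therefore not correct — peeling only fixes the constant term, not the remainder. What your argument actually proves (and what the paper's downstream use requires) is the bound with the total-variation first moment, i.e.\ with $\frac{1}{N_0}\sum_i|Y^{i,m-1,0}_{\cdot}|+\sum_{\ell}\frac{1}{N_\ell}\sum_i(|Y^{i,m-1,\ell}_{\cdot}|+|Y^{i,m-1,\ell-1}_{\cdot}|)$ on the right-hand side; note that the paper's own verification of \hvb{} (Lemma \ref{lm:iteratedmlmcintegrabilityinteract}) controls exactly this quantity after an application of the triangle inequality. You should either state the growth bound in that form or explicitly record that you are reading $\int|y|\,\mathcal{M}^{(m-1)}_t(dy)$ as the total-variation integral; as literally written, the inequality is not provable and your proof does not establish it.
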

\begin{proof}
For any $t\in[0,T]$ and $x_1,x_2\in \R^d$, by the definition of $\cM^{(m-1)}_t$, 
\begin{eqnarray}
	&& \Big|\langle \mathcal{M}^{(m-1)}_t,   b(x_1, \cdot) \rangle   - \langle \mathcal{M}^{(m-1)}_t,   b(x_2, \cdot) \rangle \Big| \nonumber \\
	& = &  \Bigg| \sum_{\ell=1}^{L} \frac{1}{N_{m-1,\ell}} \sum_{i=1}^{N_{m-1,\ell} }
 \bigg[\bigg(\frac{ t- \eta_{\ell} (t)}{h_\ell}\bigg)\cdot \left( b(x_1,Y^{i,m-1,\ell}_{\eta_{\ell}{(t)}+h_\ell})- b(x_2,Y^{i,m-1,\ell}_{\eta_{\ell}{(t)}+h_\ell})\right)\nonumber \\
 && +\bigg(1-\frac{ t- \eta_{\ell} (t)}{h_\ell}\bigg)\cdot \left( b(x_1,Y^{i,m-1,\ell}_{\eta_{\ell}{(t)}})- b(x_2,Y^{i,m-1,\ell}_{\eta_{\ell}{(t)}})\right)  \nonumber \\
 && -\bigg(\frac{ t- \eta_{\ell-1} (t)}{h_{\ell-1}}\bigg)\cdot \left( b(x_1,Y^{i,m-1,\ell-1}_{\eta_{\ell-1} {(t)}+h_{\ell-1}})- b(x_2,Y^{i,m-1,\ell-1}_{\eta_{\ell-1}{(t)}+h_{\ell-1}})\right) \nonumber \\
 &&  -\bigg(1-\frac{ t- \eta_{\ell-1} (t)}{h_{\ell-1}}\bigg)\cdot \left( b(x_1,Y^{i,m-1,\ell-1}_{\eta_{\ell-1}{(t)}})- b(x_2,Y^{i,m-1,\ell-1}_{\eta_{\ell-1}{(t)}})\right) \bigg]  \nonumber \\
 &&  + \frac{1}{N_{m-1,0}} \sum_{i=1}^{N_{m-1,0} }
 \bigg[\bigg(\frac{ t- \eta_{0} (t)}{h_0}\bigg)\cdot \left( b(x_1,Y^{i,m-1,0}_{\eta_{0}{(t)}+h_0})- b(x_2,Y^{i,m-1,0}_{\eta_{0}{(t)}+h_0})\right)\nonumber \\
 && +\bigg(1-\frac{ t- \eta_{0} (t)}{h_0}\bigg)\cdot \left( b(x_1,Y^{i,m-1,0}_{\eta_{0}{(t)}})- b(x_2,Y^{i,m-1,0}_{\eta_{0}(t)})\right) \bigg] \Bigg|.  \nonumber
\end{eqnarray}
The required bounds follow from  \hLip \,.
The corresponding estimates for $\norm{\overline{\sigma}(x_1, \cV_{\eta(t)} ) - \overline{\sigma}(x_2, \cV_{\eta(t)})}$ and $\norm{\overline{\sigma}(x_1, \cV_{\eta(t)})}$ can be obtained in a similar way and are hence omitted.  
\end{proof}

\begin{lemma}[Verification of \hvb] \label{lm:iteratedmlmcintegrabilityinteract}
Assume \hLip \, and \hLaw.  Then for any $p\geq 2$, there exists a constant $c$ such that  \[
        \sup_{n \in \mathbb{N} \cup \{0 \}}\sup_{t\in[0,T]}\bE \bigg| \int_{\bR^d } |x|^p {\cM}^{(n)}_t (dx) \bigg| \leq c.
\]
\end{lemma}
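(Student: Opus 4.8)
The plan is to argue by induction on the Picard index $n$, exploiting the recursive dependence in which $\cM^{(n-1)}$ enters the coefficients of the particles $Y^{i,n,\ell}$ in \eqref{eq:contiousYinteract}, whose interpolated empirical measures then assemble $\cM^{(n)}$ through \eqref{eq:defMLMCop}. The base case $n=0$ is immediate: since $Y^{i,0,\ell}=X_0$ for every $\ell$, the coupled level differences vanish and $\cM^{(0)}_t$ reduces to the empirical measure of $\mu_0$, so the claim follows from \hLaw.

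The decisive structural point is that, by \hvLip (verified for $\cV=\cM^{(n-1)}$ in Lemma \ref{lm:Regularitycoeffiinteract}), the coefficients grow in the measure argument \emph{only} through the first-moment functional $\int_{\Rd}|y|\,\cV(dy)$. I would therefore first control, for every $q\ge 1$, the auxiliary quantities $A^{(n)}_q:=\sup_{0\le t\le T}\bE\big|\int_{\Rd}|y|\,\cM^{(n)}_t(dy)\big|^q$ rather than the $p$-th moment functional directly. Writing $\cM^{(n)}_t$ as its level-$0$ term plus the coupled level differences $\widetilde\mu^{Y^{n,\ell}}_t-\widetilde\mu^{Y^{n,\ell-1}}_t$, the level-$0$ contribution is bounded by $\sup_\ell\sup_t\bE|Y^{i,n,\ell}_t|^q$, while each difference is estimated through the Lipschitz bound $\big| |Y^{i,n,\ell}_t| - |Y^{i,n,\ell-1}_t| \big|\le |Y^{i,n,\ell}_t-Y^{i,n,\ell-1}_t|$ together with the $L^q$ form of the strong convergence estimate of Lemma \ref{lm:variance1nonintinteract}, giving a contribution of order $h_\ell^{q/2}$. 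Crucially, this bound carries no factor $|\cdot|^{q-1}$, so no coupling between different moment orders is created; summing over levels with Minkowski's inequality and the convergence of $\sum_\ell(T2^{-\ell})^{1/2}$ yields a bound independent of $L$. A standard moment estimate for \eqref{eq:contiousYinteract} based on \hvLip, followed by Gronwall's inequality, bounds $\sup_\ell\sup_t\bE|Y^{i,n,\ell}_t|^q$ in terms of $1+A^{(n-1)}_q$, so that $A^{(n)}_q\le a_q+\kappa A^{(n-1)}_q$ for constants $a_q,\kappa$.

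Once the $A^{(n)}_q$ are under control uniformly in $n$, the assertion follows in one further step. The level-$0$ term of $\int_{\Rd}|x|^p\cM^{(n)}_t(dx)$ is bounded by $\sup_\ell\sup_t\bE|Y^{i,n,\ell}_t|^p$ and hence, by Lemma \ref{lm:intergrabilitySDErandomcoeffinteract}, by $1+A^{(n-1)}_p$; the level differences are handled by $\big||Y^{i,n,\ell}_t|^p-|Y^{i,n,\ell-1}_t|^p\big|\le c(|Y^{i,n,\ell}_t|^{p-1}+|Y^{i,n,\ell-1}_t|^{p-1})\,|Y^{i,n,\ell}_t-Y^{i,n,\ell-1}_t|$, Cauchy--Schwarz and Lemma \ref{lm:variance1nonintinteract}, producing a contribution of order $(1+A^{(n-1)}_{2(p-1)})^{1/2}h_\ell^{1/2}$ summable over $\ell$. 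Because these bounds invoke the $A^{(n-1)}_q$ only (with $q\in\{p,2(p-1)\}$) and do not feed back into a recursion, the moment-doubling inherent in differentiating $|\cdot|^p$ is harmless here.

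The main obstacle is closing the recursion $A^{(n)}_q\le a_q+\kappa A^{(n-1)}_q$ uniformly in $n$: a crude Gronwall estimate leaves $\kappa$ of order $Te^{cT}$, which need not be smaller than $1$ and would allow $A^{(n)}_q$ to grow geometrically in the iteration. To secure an $n$-uniform constant I would partition $[0,T]$ into subintervals short enough that the effective constant on each is strictly below $1$, propagating a bounded invariant set of moments across the partition --- the discrete-time analogue of the contraction underlying Sznitman's fixed-point construction. It is precisely the linear (first-moment) dependence of the coefficients on the measure that makes this clean single-order contraction, rather than a mere finiteness statement for each fixed $n$, available.
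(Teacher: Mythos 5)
Your strategy can be made to work, but it is a genuinely different and considerably heavier route than the paper's, and the obstacle you devote your last paragraph to is one the paper never meets. The paper does not use the coupling between levels at all here: it bounds $|P^{i,\ell}_t-P^{i,\ell-1}_t|$ crudely by $|P^{i,\ell}_t|+|P^{i,\ell-1}_t|$, so that $\bE\big|\int_{\bR^d}|x|^p\,\cM^{(n)}_t(dx)\big|$ is controlled by single-particle $p$-th moments at each level; Lemma \ref{lm:intergrabilitySDErandomcoeffinteract} then yields the recursion $\sup_{t}\bE\big|\int|x|^p\cM^{(n)}_t(dx)\big|\le c\big(1+\int_0^T\sup_{u\le s}\bE\big|\int|x|^p\cM^{(n-1)}_u(dx)\big|\,ds\big)$, and the decisive point --- which your write-up misses --- is that the time integral is \emph{retained} on the right-hand side. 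Iterating an inequality of this form produces the factor $(cT)^n/n!$, so uniformity in $n$ is automatic with no smallness, contraction or localisation condition; by taking the supremum over time too early you collapse the recursion to $A^{(n)}\le a+\kappa A^{(n-1)}$ and are then forced into the partition argument, which is unnecessary. Your route also has a soft spot you do not acknowledge: Lemma \ref{lm:variance1nonintinteract}, which you invoke for the level differences, assumes \hvb and \hvreg for $\cV=\cM^{(n-1)}$, and its constant depends on precisely the moment bounds being established; inside an induction on $n$ this is legitimate, but the constant in your ``$c\,h_\ell^{q/2}$'' then depends on $A^{(n-1)}$ and must itself be carried through the contraction bookkeeping, which you have not done. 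What your extra machinery buys is a level-difference bound of order $h_\ell^{1/2}$, summable in $\ell$ uniformly in $L$, whereas the paper's triangle inequality gives a sum of $L+1$ terms each of order one; if one wants the constant to be manifestly independent of $L$, your refinement is the right instinct, but for the integrability statement as proved in the paper the crude bound plus the factorial gain is shorter and sufficient. Your base case $n=0$ is correct.
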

\begin{proof}
 For simplicity of notation, we rewrite 
\begin{align*}
     \int_{\bR^d } |x|^p {\cM}^{(n)}_t (dx)   \defeq \frac{1}{ N_{0} } \sum_{i=1}^{N_{0}} P_{t}^{i,0}
+ \sum_{\ell=1}^{L} \frac{1}{N_{\ell}} \sum_{i=1}^{N_{\ell} }
 \left( P_{t}^{i,\ell} - P_{t}^{i,\ell-1}\right),
\end{align*}
where 
$$ P^{i,\ell}_t = \bigg( \frac{t- \eta_{\ell} (t)}{h_{\ell}} \bigg)  \big| Y^{i,n, \ell}_{\eta_{\ell} (t) + h_{\ell}} \big|^p + \bigg( 1- \frac{t- \eta_{\ell} (t)}{h_{\ell}} \bigg)  \big| Y^{i,n, \ell}_{\eta_{\ell} (t) } \big|^p.$$
We first fix $\ell \in \{1, \ldots , L \}$ and define
\[
        \Delta_t^{i,\ell} := \bE|P_{t}^{i,\ell} - P_{t}^{i,\ell-1} |, \quad i \in \{ 1, \ldots, N_{\ell} \}.
\]
By exchangeability, there exists a constant $c$ (independent of the Picard step $n$) such that
\[
        \bE[|\Delta_{t}^{i,\ell}|]\leq c\sum_{\ell'=\ell-1}^\ell(\bE |Y_{\eta_{\ell'}(t)}^{1, n, \ell'}|^p+\bE |Y_{\eta_{\ell'}(t)+h_{\ell'}}^{1, n, \ell'}|^p).
\]
By the triangle inequality,
\begin{align*}
\bE \bigg|\frac{1}{N_{\ell}}\sum^{N_{\ell}}_{i=1}\Delta_{t}^{i,\ell} \bigg|\leq N_{\ell}^{-1}\sum^{N_{\ell}}_{i=1}\bE|\Delta_{t}^{i,\ell}| \leq  c\sum_{\ell'=\ell-1}^\ell\bigg( \bE |Y_{\eta_{\ell'}(t)}^{1,n,\ell'}|^p+\bE |Y_{\eta_{\ell'}(t)+h_{\ell'}}^{1,n,\ell'}|^p\bigg).
\end{align*}
Similarly, we can show that
\[
        \bE \bigg|\frac{1}{N_0} \sum_{i=1}^{N_0} P^{i,0}_t \bigg| \leq c \bigg( \bE |Y_{\eta_{0}(t)}^{1,n,0}|^p+\bE |Y_{\eta_{0}(t)+h_{0}}^{1,n,0}|^p\bigg).
\]
Note that
\begin{eqnarray}
\bE \bigg| \int_{\bR^d } |x|^p {\cM}^{(n)}_t (dx)  \bigg|&  \leq & \bE \bigg| \frac{1}{N_0} \sum_{i=1}^{N_0} P^{i,0}_t + \sum_{\ell=1}^{L}\frac{1}{N_{\ell}}\sum^{N_{\ell}}_{i=1}\Delta_{t}^{i,\ell} \bigg| \nonumber \\
& \leq & c \sum_{\ell=0}^L \bigg( \bE |Y_{\eta_{\ell}(t)}^{1,n,\ell}|^p+\bE |Y_{\eta_{\ell}(t)+h_{\ell}}^{1,n,\ell}|^p\bigg). \nonumber 
\end{eqnarray}
We can see from the proof of Lemma \ref{lm:Regularitycoeffiinteract} that the constant $c$ in Lemma \ref{lm:intergrabilitySDErandomcoeffinteract} does not depend on the particular Picard step. Therefore, by Lemma \ref{lm:intergrabilitySDErandomcoeffinteract},  $$ \sup_{0\leq t\leq T} \bE \bigg| \int_{\bR^d } |x|^p {\cM}^{(n)}_t (dx) \bigg| \leq c\bigg( 1+ \int_0^T \sup_{0\leq u\leq s} \bE \bigg| \int_{\bR^d } |x|^p {\cM}^{(n-1)}_u (dx) \bigg| \, ds \bigg).
$$ 
By iteration, we conclude that
\begin{eqnarray}
         \sup_{0\leq t\leq T} \bE \bigg| \int_{\bR^d } |x|^p {\cM}^{(n)}_t (dx) \bigg| & \leq & \sum_{r=0}^{n-1} \frac{(cT)^r}{r!} +  \sup_{0\leq t\leq T} \bE \bigg| \int_{\bR^d } |x|^p {\cM}^{(0)}_t (dx) \bigg| \frac{(cT)^n}{n!} \nonumber \\ & \leq & e^{cT} \bigg( 1+ \sup_{0\leq t\leq T} \bE \bigg| \int_{\bR^d } |x|^p {\cM}^{(0)}_t (dx) \bigg| \bigg) < +\infty. \nonumber
\end{eqnarray}
\end{proof}

\begin{lemma}[Verification of \hvreg] \label{prop:nonintcomplexitywemfZtinteract}
Assume \hLip \, and \hLaw. Given any Lipschitz continuous function $C^{0,2}_{b,b} \ni P:\R^d\times\R^d\rightarrow\R$ and $n \in \mathbb{N} \cup \{0 \}$, there exists a constant $c$ such that 
\begin{equation}\label{eq:nonintsterrormeanfZt}
	\bE\bigg| \langle \mathcal{M}^{(n)}_t,   P(x, \cdot) \rangle -\langle \mathcal{M}^{(n)}_s,   P(x, \cdot) \rangle \bigg|^2 \leq c(t-s),
\end{equation}
for any $x \in \bR^d$ and $0 \leq s \leq t \leq T$.
\end{lemma}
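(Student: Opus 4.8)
The plan is to expand $\langle\cM^{(n)}_t,P(x,\cdot)\rangle$ through its definition \eqref{eq:defMLMCop} as a finite signed sum over the levels $\ell=0,\ldots,L$ of increments of interpolated empirical measures, and then to control the $L^2$-norm of the time increment of each \emph{single-level} contribution by the time-regularity of the underlying Euler particles. Identifying $Z^{i,\ell}=Y^{i,n,\ell}$ and $\cV=\cM^{(n-1)}$, the hypotheses \hvLip and \hvb hold for these particles with constants independent of the Picard step (Lemmas \ref{lm:Regularitycoeffiinteract} and \ref{lm:iteratedmlmcintegrabilityinteract}), so Lemma \ref{lm:regularityYinteract} applies and yields $\bE[|Y^{i,n,\ell}_t-Y^{i,n,\ell}_s|^2]\le c(t-s)$ uniformly in $i,\ell,n$. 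Writing $\phi^{i,\ell}_u$ for the piecewise-linear-in-time interpolant of $u\mapsto P(x,Y^{i,n,\ell}_u)$ on the grid $\Pi^{\ell}$, definition \eqref{eq:intmeasureintro} gives $\langle\widetilde{\mu}^{Y^{n,\ell},N_\ell}_u,P(x,\cdot)\rangle=\frac{1}{N_\ell}\sum_i\phi^{i,\ell}_u$, so that $\langle\cM^{(n)}_t-\cM^{(n)}_s,P(x,\cdot)\rangle$ is a signed combination of the $2L+1$ terms $\frac{1}{N_\ell}\sum_i(\phi^{i,\ell}_t-\phi^{i,\ell}_s)$.

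First I would establish the single-level estimate $\bE[|\phi^{1,\ell}_t-\phi^{1,\ell}_s|^2]\le c(t-s)$ with $c$ independent of $\ell$, $N_\ell$, $n$ and $x$. Since $P$ is Lipschitz, $|P(x,y_1)-P(x,y_2)|\le L|y_1-y_2|$ uniformly in $x$, so the interpolant increments are controlled by grid increments of $Y^{1,n,\ell}$. When $s,t$ lie in a common interval $[t^\ell_k,t^\ell_{k+1}]$ one has $\phi^{1,\ell}_t-\phi^{1,\ell}_s=\frac{t-s}{h_\ell}\big(P(x,Y^{1,n,\ell}_{t^\ell_{k+1}})-P(x,Y^{1,n,\ell}_{t^\ell_{k}})\big)$, whence $\bE|\phi^{1,\ell}_t-\phi^{1,\ell}_s|^2\le c\frac{(t-s)^2}{h_\ell}\le c(t-s)$ because $t-s\le h_\ell$. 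When $s,t$ straddle one or more grid points I would split the increment into the two boundary partial-increments plus a grid-to-grid increment, bounding the grid-to-grid term directly by Lemma \ref{lm:regularityYinteract} and each boundary term exactly as in the single-interval case. Exchangeability of $\{Y^{i,n,\ell}\}_i$ then transfers the same bound to $\frac{1}{N_\ell}\sum_i(\phi^{i,\ell}_t-\phi^{i,\ell}_s)$ by Minkowski's inequality.

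Finally I would combine the single-level estimates by Minkowski's inequality over the $2L+1$ level contributions to obtain $\big(\bE|\langle\cM^{(n)}_t-\cM^{(n)}_s,P(x,\cdot)\rangle|^2\big)^{1/2}\le c\sqrt{t-s}$, which is exactly \eqref{eq:nonintsterrormeanfZt}; the identical argument with $\sigma$ in place of $P$ gives the diffusion estimate. The hard part will be the single-level interpolation bookkeeping, namely reducing the linear-in-time interpolant to grid increments without losing the $\sqrt{t-s}$ scaling; the delicate case is when $t-s$ is small yet crosses a grid point, where the saving comes precisely from the partial-increment weight contributing a factor $(t-s)/h_\ell$ that compensates the $\sqrt{h_\ell}$ coming from the regularity of $Y^{i,n,\ell}$. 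I note that the constant obtained this way is permitted to grow with the number of levels; should an $L$-uniform constant be required, one would instead keep the level-$\ell$ and level-$(\ell-1)$ interpolants coupled and extract an extra factor $\sqrt{h_\ell}$ from the strong-convergence estimate of Lemma \ref{lm:variance1nonintinteract} (in the spirit of Lemma \ref{lm:varianceinterpvsnoninterpinteract}), after which the summable series $\sum_\ell\sqrt{h_\ell}<\infty$ closes the bound.
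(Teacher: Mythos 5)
Your proposal is correct and follows essentially the same route as the paper: decompose $\mathcal{M}^{(n)}$ into per-level linearly interpolated empirical averages, exploit the interpolation weight $(t-s)/h_\ell$ (which compensates the $\sqrt{h_\ell}$ from the time-regularity Lemma \ref{lm:regularityYinteract}) when $s,t$ share a grid interval, and apply that lemma directly for grid-to-grid increments, before summing over levels. Your closing caveat about the constant potentially growing with $L$ under a crude level-by-level combination is a fair observation that applies equally to the paper's own final summation step.
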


\begin{proof}\label{pf:nonintcomplexitywemfZt}
When analysing the regularity of MLMC measure \eqref{eq:nonintsterrormeanfZt} one needs to pay attention to the interpolation in time that we used. Pick any $\ell^*\in\{0,1,2,\ldots L\}$.  For simplicity of notation, we rewrite $ \langle \mathcal{M}^{(n)}_t,   P(x, \cdot) \rangle $  as
\begin{align}\label{eq:mtg}
       \langle \mathcal{M}^{(n)}_t,   P(x, \cdot) \rangle \defeq \frac{1}{ N_{n,0} } \sum_{i=1}^{N_{n,0}} P_{t}^{i,0}
+ \sum_{\ell=1}^{L} \frac{1}{N_{n,\ell}} \sum_{i=1}^{N_{n,\ell} }
 \left( P_{t}^{i,\ell} - P_{t}^{i,\ell-1}\right),
\end{align}
where 
$$ P^{i,\ell}_t = \bigg( \frac{t- \eta_{\ell} (t)}{h_{\ell}} \bigg)  P\big( x, Y^{i,n, \ell}_{\eta_{\ell} (t) + h_{\ell}} \big) + \bigg( 1- \frac{t- \eta_{\ell} (t)}{h_{\ell}} \bigg)  P \big( x,Y^{i,n, \ell}_{\eta_{\ell} (t) } \big).$$
Given any $k\in\{0,1,\ldots,2^{L}-1\}$, we compute \small {
\begin{eqnarray*}
&& \langle \mathcal{M}^{(n)}_{t_{k+1}^\ells},   P(x, \cdot) \rangle  -  \langle \mathcal{M}^{(n)}_{t_{k}^\ells},   P(x, \cdot) \rangle \nonumber \\
&=& \frac{1}{ N_{n,0} } \sum_{i=1}^{N_{n,0}} (P_{t_{k+1}^\ells}^{i,0} -P_{t_k^\ells}^{i,0}) + \sum_{\ell=1}^{L} \frac{1}{N_{n,\ell}} \sum_{i=1}^{N_{n,\ell} }
 \bigg( (P_{t_{k+1}^\ells}^{i,\ell} -P_{t_k^\ells}^{i,\ell})- (P_{t_{k+1}^\ells}^{i,\ell-1}-P_{t_k^\ells}^{i,\ell-1})\bigg) .
\end{eqnarray*}}
\normalsize{Thus, we only need to consider} $P_{t_{k+1}^\ells}^{i,\ell} - P_{t_k^\ells}^{i,\ell}$, for each $\ell\in\{0,1,\ldots,L\}.$ There are two cases depending on the value of $\ell$: 
$\ell<\ells$ and $\ell\geq\ells$.

\textit{For levels $\ell<\ells$, at least one of $P_{t_{k+1}^\ells}^{i,\ell}$ and $P_{t_{k}^\ells}^{i,\ell}$ is an interpolated value}. Then there exist a unique $s\in\{0,1,\ldots,2^\ell-1\}$ (chosen such that $\eta_{\ell}(t^{\ell^{*}}_k) = t^{\ell}_s$) and constants $\lambda \in (0,1-\frac{h_\ells}{h_{\ell}}]$ and $\tilde{\lambda}$, given by
\[
        \lambda = \dfrac{t_k^\ells-t_s^\ell}{h_{\ell}}~\text{and}~ \tilde{\lambda} =  \dfrac{t_{k+1}^\ells-t_s^\ell}{h_{\ell}},
\]
such that
\begin{gather*}
        P_{t_{k}^\ells}^{i,\ell}=(1-\lambda)P(x, Y_{t_{s}^\ell}^{i,n,\ell})+  \lambda P(x,Y_{t_{s+1}^\ell}^{i,n,\ell}) ~\text{and}~ P_{t_{k+1}^\ells}^{i,\ell} = (1-\tilde\lambda)P(x, Y_{t_{s}^\ell}^{i,n,\ell} )+  \tilde\lambda P(x,Y_{t_{s+1}^\ell}^{i,n,\ell}).
\end{gather*}
Note that $\tilde{\lambda}-\lambda=\dfrac{h_{\ells}}{h_{\ell}}$. By taking the difference between $P_{t_{k+1}^\ells}^{i,\ell}$ and $P_{t_{k}^\ells}^{i,\ell}$, we compute that
\begin{align}\label{eq:caseintnon1Ztinteract}
P_{t_{k+1}^\ells}^{i,\ell} -P_{t_{k}^\ells}^{i,\ell} =\dfrac{h_{\ells}}{h_{\ell}} (P(x,Y_{t_{s+1}^{\ell}}^{i,n,\ell})-P(x,Y_{t_{s}^{\ell}}^{i,n,\ell})).
\end{align}

 \textit{For levels $\ell\geq\ells$, both of them are not interpolated}. This gives
\begin{align}\label{eq:caseintnon3Ztinteract}
        P_{t_{k+1}^\ells}^{i,\ell}-P_{t_{k}^\ells}^{i,\ell} = P(x,Y_{t_{k+1}^{\ell^*}}^{i,n,\ell})-P(x,Y_{t_{k}^{\ells}}^{i,n,\ell}).
\end{align}
By Lemmas \ref{lm:iteratedmlmcintegrabilityinteract} and \ref{lm:Regularitycoeffiinteract}, the hypotheses of Lemma \ref{lm:regularityYinteract} are satisfied. By applying Lemma \ref{lm:regularityYinteract} to \eqref{eq:caseintnon1Ztinteract} and \eqref{eq:caseintnon3Ztinteract} along with the global Lipschitz property of $P$, we have
\begin{align*}
\bE|P_{t_{k+1}^\ells}^{i,\ell} -P_{t_k^\ells}^{i,\ell}|^2\leq c h_{\ells}\quad\forall\ell\in\{0,1,\ldots,L\}.
\end{align*}
This shows that \footnotesize{
\begin{eqnarray}
&&\bE\bigg|\langle \mathcal{M}^{(n)}_{t_{k+1}^\ells},   P(x, \cdot) \rangle  -  \langle \mathcal{M}^{(n)}_{t_{k}^\ells},   P(x, \cdot) \rangle \bigg|^2 \nonumber \\
& \leq & \frac{1}{ N_{n,0} } \sum_{i=1}^{N_{n,0}} \bE|P_{t_{k+1}^\ells}^{i,0}-P_{t_{k}^\ells}^{i,0}|^2 + \sum_{\ell=1}^{L} \frac{2}{N_{n,\ell}} \sum_{i=1}^{N_{n,\ell} }
 \bigg( \bE|P_{t_{k+1}^\ells}^{i,\ell} -P_{t_{k}^\ells}^{i,\ell}|^2+ \bE|P_{t_{k+1}^\ells}^{i,\ell-1}-P_{t_{k}^\ells}^{i,\ell-1}|^2\bigg) \nonumber \\
 &\leq &  c h_\ells. \nonumber
\end{eqnarray}}
\normalsize{The proof is complete by replacing $s$ and $t$ by $\eta_L(s)$ and $\eta_L(t)$ respectively if any of them (or both) does not belong to $\Pi^L$. }
\end{proof}

Lemma \ref{lm:c1nonintinteract} below gives a decomposition of MSE (mean-square-error) for MLMC along one iteration of the particle system \eqref{eq:contiousYinteract}. 

\begin{lemma}\label{lm:c1nonintinteract}
Assume  \hkreg \, and \hLaw . Let $P \in C^{0,2}_{b,b}( \bR^d \times \bR^d, \bR)$ be a Lipschitz continuous function. 
Let
\[
	MSE^{(m)}_{t} \big( P (x, \cdot) \big) \defeq \bE\bigg[ \Big( \bE[P(x,X_{t})]- \langle \mathcal{M}^{(m)}_t,  P(x,\cdot) \rangle  \Big)^2\bigg] ,\quad t\in[0,T].
\]
Then, there exists a constant $c>0$ $($independent of the choices of $m$, $L$ and $(N_{m,\ell})_{ 0 \leq \ell \leq L})$ such that for every $t \in [0,T]$, \small{
\begin{eqnarray}
&&	 \int_{\bR^d} MSE^{(m)}_{\eta_L(t)} \big( P (x, \cdot) \big) \, \, \mu^{\bar{Z}^L}_{\eta_L(t)} (dx) \nonumber\\
&\leq & c  \bigg(h_L^2+ \int_{0}^{t}  \bigg[ \int_{ \bR^d}  \bE \Big|\langle \mathcal{M}^{(m-1)}_{\eta_L(s)},  b(x,\cdot) \rangle  - \bE[b(x,X_{\eta_L(s)})] \Big|^2 \mu^{\bar{Z}^L}_{\eta_L(s)} (dx) \bigg] \,ds \nonumber \\
	&& +  \int_0^t \bigg[ \int_{\bR^d}  \bE \Big\|\langle \mathcal{M}^{(m-1)}_{\eta_L(s)},  \sigma(x,\cdot) \rangle  - \bE[\sigma(x,X_{\eta_L(s)})] \Big\|^2 \mu^{\bar{Z}^L}_{\eta_L(s)}(dx) \bigg] ds +  \sum_{\ell=0}^L \frac{h_{\ell}}{N_{m,\ell}}\bigg). \nonumber
\end{eqnarray}}
\normalsize{Furthermore, if we assume that the functions $b$ and $\sigma$ are both bounded, then there exists a constant $c>0$ $($independent of the choices of $m$, $L$ and $(N_{m,\ell})_{0 \leq \ell \leq L})$ such that for every $t \in [0,T]$,}
\begin{eqnarray}
&&	 \sup_{x \in \bR^d} MSE^{(m)}_{\eta_L(t)} \big( P (x, \cdot) \big) \nonumber\\
&\leq & c  \bigg(h_L^2+ \int_{0}^{t}  \bigg[ \sup_{ x \in \bR^d}  \bE \Big|\langle \mathcal{M}^{(m-1)}_{\eta_L(s)},  b(x,\cdot) \rangle  - \bE[b(x,X_{\eta_L(s)})] \Big|^2  \bigg] \,ds \nonumber \\
	&& +  \int_0^t \bigg[ \sup_{x \in \bR^d}  \bE \Big\|\langle \mathcal{M}^{(m-1)}_{\eta_L(s)},  \sigma(x,\cdot) \rangle  - \bE[\sigma(x,X_{\eta_L(s)})] \Big\|^2  \bigg] ds +  \sum_{\ell=0}^L \frac{h_{\ell}}{N_{m,\ell}}\bigg). \nonumber
\end{eqnarray}
\end{lemma}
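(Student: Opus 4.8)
The plan is to run a conditional bias--variance decomposition of the mean-square error, conditioning throughout on $\F^\cV_T=\mathcal{F}^{m-1}$, the $\sigma$-algebra generated by the particles of the previous Picard step. First I would observe that $\bE[P(x,X_{\eta_L(t)})]$ is deterministic and $\bE[\langle \mathcal{M}^{(m)}_{\eta_L(t)}, P(x,\cdot)\rangle \mid \F^\cV_T]$ is $\F^\cV_T$-measurable, so the cross term in the expansion of
\[
\bE\bigg[\Big(\bE[P(x,X_{\eta_L(t)})] - \langle \mathcal{M}^{(m)}_{\eta_L(t)}, P(x,\cdot)\rangle\Big)^2\bigg]
\]
vanishes by the tower property, giving
\[
MSE^{(m)}_{\eta_L(t)}\big(P(x,\cdot)\big) = \bE\Big[\big(\bE[P(x,X_{\eta_L(t)})] - \bE[\langle \mathcal{M}^{(m)}_{\eta_L(t)}, P(x,\cdot)\rangle \mid \F^\cV_T]\big)^2\Big] + \bE\Big[\mathrm{Var}\big(\langle \mathcal{M}^{(m)}_{\eta_L(t)}, P(x,\cdot)\rangle \mid \F^\cV_T\big)\Big].
\]
The second summand is exactly the conditional MLMC variance already bounded by Lemma~\ref{lm:varianceinterpvsnoninterpinteract}; the first is a squared conditional bias that I will reduce to a single Euler weak error.

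The key step is to identify the conditional mean. Using the conditional independence of the particles $\{Z^{i,\ell}\}_i=\{Y^{i,m,\ell}\}_i$ given $\F^\cV_T$ (the criterion in \hvb, verified via Lemma~\ref{lm:iteratedmlmcintegrabilityinteract}), the conditional expectation of each interpolated empirical measure $\widetilde\mu^{Z^\ell,N_\ell}_{\eta_L(t)}$ tested against $P(x,\cdot)$ equals the linear interpolation of $\bE[P(x,Z^\ell_{\,\cdot})\mid\F^\cV_T]$ at the two enclosing grid points, with weights $\lambda^\ell_t$ exactly as in \eqref{lm:definitionP}; crucially this conditional mean does not depend on the sample size $N_\ell$. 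Hence the MLMC telescoping survives the conditional expectation, and since $\widetilde\mu^{Z^{-1}}=0$ while the finest level carries no interpolation at $\eta_L(t)\in\Pi^L$,
\[
\bE[\langle \mathcal{M}^{(m)}_{\eta_L(t)}, P(x,\cdot)\rangle \mid \F^\cV_T] = \bE[P(x, Z^L_{\eta_L(t)}) \mid \F^\cV_T].
\]
I would carry out this collapse carefully, tracking the interpolation at coarse levels $\ell<L$ (where $\eta_L(t)\notin\Pi^\ell$) so that the telescoping is seen to hold term by term.

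With this identity the squared conditional bias is precisely $\bE[\,|\bE[P(x,Z^L_{\eta_L(t)})\mid\F^\cV_T] - \bE[P(x,X_{\eta_L(t)})]|^2\,]$, which is controlled by Lemma~\ref{lm:weakerror1nonintinteract} at level $\ell=L$; its hypotheses \hvb, \hvreg \ and \hvLip \ hold for $\cV=\mathcal{M}^{(m-1)}$ by Lemmas~\ref{lm:Regularitycoeffiinteract}--\ref{prop:nonintcomplexitywemfZtinteract}. Recalling $\overline{b}(x,\mathcal{M}^{(m-1)}_s)=\langle \mathcal{M}^{(m-1)}_s,b(x,\cdot)\rangle$ and similarly for $\overline{\sigma}$, that bound yields the $h_L^2$ term together with the two weak-error integrals against $\mu^{\bar{Z}^L}$; since its right-hand side does not involve the frozen variable $x$ of $P(x,\cdot)$, integrating in $x$ against the probability measure $\mu^{\bar{Z}^L}_{\eta_L(t)}$ leaves it unchanged. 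For the conditional variance I would apply Lemma~\ref{lm:varianceinterpvsnoninterpinteract} with $\mu=\mu^{\bar{Z}^L}$ to get $c\sum_{\ell=0}^L h_\ell/N_{m,\ell}$. Adding the two contributions gives the first assertion.

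For the uniform-in-$x$ statement under bounded $b,\sigma$, the same decomposition holds pointwise in $x$. Boundedness makes the integrands $\bE|\langle \mathcal{M}^{(m-1)}_{\eta_L(s)},b(x,\cdot)\rangle-\bE[b(x,X_{\eta_L(s)})]|^2$ and its $\sigma$-analogue uniformly bounded, so the integrals against $\mu^{\bar{Z}^L}$ become suprema over $x$, while the conditional weak-error bound stays uniform in $x$ because condition \hvdiffs is uniform in the frozen variable. For the variance I would avoid the level-$0$ second-moment bound of Lemma~\ref{lm:varianceinterpvsnoninterpinteract} and instead use that $P$ is Lipschitz in its second argument, so $\mathrm{Var}(P(x,Z^{1,0})\mid\F^\cV_T)\le c\,\mathrm{Var}(Z^{1,0}\mid\F^\cV_T)$ with a bound independent of $x$; as $h_0=T$ is fixed this reproduces the $h_0/N_{m,0}$ term uniformly. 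The hard part will be the second step: justifying that the nested MLMC estimator's bias collapses, under conditional expectation, to the single finest-level Euler weak error, since this is what converts the statistical bias of the estimator into the deterministic weak-error quantities, and the bookkeeping of the coarse-level time interpolation is the delicate ingredient.
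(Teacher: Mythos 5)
Your proposal is correct and follows essentially the same route as the paper's proof: the same conditional bias--variance decomposition with respect to $\F^{m-1}$, the collapse of the conditional mean of the MLMC estimator to $\bE[P(x,Y^{1,m,L}_{\eta_L(t)})\mid\F^{m-1}]$ (which the paper dispatches with the phrase ``by exchangeability'' and you justify in more detail via the level-wise telescoping), and then Lemma~\ref{lm:weakerror1nonintinteract} for the squared bias and Lemma~\ref{lm:varianceinterpvsnoninterpinteract} for the conditional variance. Your additional care with the coarse-level time interpolation and with the uniform-in-$x$ variant under bounded coefficients fills in steps the paper leaves implicit, but does not constitute a different argument.
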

\begin{proof}
For $x\in\R^d$ and $t\in[0,T]$, we consider
\begin{eqnarray}
     &&  \bE\bigg[\Big( \bE[P(x,X_{\eta_L(t)})]- \langle \mathcal{M}^{(m)}_{\eta_L(t)},  P(x,\cdot) \rangle  \Big)^2\bigg] \nonumber \\
& = & \bE\bigg[\bigg( \bE[P(x,X_{\eta_L(t)})]-\bE \bigg[\langle \mathcal{M}^{(m)}_{\eta_L(t)},  P(x,\cdot) \rangle  \bigg|\F^{m-1} \bigg] \nonumber \\
&& +\bE\bigg[\langle \mathcal{M}^{(m)}_{\eta_L(t)},  P(x,\cdot) \rangle  \bigg|\F^{m-1} \bigg] -\langle \mathcal{M}^{(m)}_{\eta_L(t)},  P(x,\cdot) \rangle  \bigg)^2\bigg]. \nonumber
\end{eqnarray}
Observe that
\begin{eqnarray} 
&& MSE^{(m)}_{\eta_L(t)} \big( P (x, \cdot) \big) \nonumber \\
&= & \bE \bigg[ \bigg(\bE[P(x,X_{\eta_L(t)})]-\bE[P(x,Y_{\eta_L(t)}^{1,m,L})|\F^{m-1}]\bigg)^2 \bigg] \nonumber \\
&& +\bE \bigg[ \bigg( \bE\bigg[\langle \mathcal{M}^{(m)}_{\eta_L(t)},  P(x,\cdot) \rangle  \bigg|\F^{m-1} \bigg] -\langle \mathcal{M}^{(m)}_{\eta_L(t)},  P(x,\cdot) \rangle \bigg)^2\bigg] , \label{eq:c1ineq1interact}
\end{eqnarray}
as $ \bE\bigg[\langle \mathcal{M}^{(m)}_{\eta_L(t)},  P(x,\cdot) \rangle  \bigg|\F^{m-1} \bigg] = \bE[P(x,Y_{\eta_L(t)}^{1,m,L})|\F^{m-1}]$ by exchangeability.
Next, from Lemma \ref{lm:weakerror1nonintinteract}, there exists a constant $c$ such that

\begin{eqnarray}
&&	\bE \bigg[ \bigg(\bE[P(x,X_{\eta_L(t)})]-\bE[P(x,Y_{\eta_L(t)}^{1,m,L})|\F^{m-1}]\bigg)^2 \bigg] \nonumber \\
&\leq & c\ \bigg(h_L^2+\int_{0}^{t}  \bigg[ \int_{ \bR^d}  \bE \Big|\langle \mathcal{M}^{(m-1)}_{\eta_L(s)},  b(x,\cdot) \rangle - \bE[b(x,X_{\eta_L(s)})] \Big|^2 \mu^{\bar{Z}^L}_{\eta_L(s)} (dx) \bigg] \,ds \nonumber \\
&& + \int_0^t \bigg[ \int_{\bR^d}  \bE \Big\|\langle \mathcal{M}^{(m-1)}_{\eta_L(s)},  \sigma(x,\cdot) \rangle  - \bE[\sigma(x,X_{\eta_L(s)})] \Big\|^2 \mu^{\bar{Z}^L}_{\eta_L(s)} (dx) \bigg] \,ds \bigg).  \label{eq:c1ineq2interact}
\end{eqnarray}
By Lemma \ref{lm:varianceinterpvsnoninterpinteract}, there exists a constant $c$ such that
\begin{equation} \label{eq:c1ineq5interact}
	\begin{split}
	&	\int_{\bR^d} \bE \bigg[ \bigg( \bE\bigg[\langle \mathcal{M}^{(m)}_{\eta_L(t)},  P(x,\cdot) \rangle  \bigg|\F^{m-1} \bigg] -\langle \mathcal{M}^{(m)}_{\eta_L(t)},  P(x,\cdot) \rangle \bigg)^2\bigg] \,  \mu^{\bar{Z}^L}_{\eta_L(t)} (dx) \\ 
		& = \int_{\bR^d} \bE \bigg[ \text{Var} 
\bigg( \langle \mathcal{M}^{(m)}_{\eta_L(t)},  P(x,\cdot) \rangle \bigg|\F^{m-1} \bigg) \bigg] \,  \mu^{\bar{Z}^L}_{\eta_L(t)} (dx)  \leq c\sum_{\ell=0}^L \frac{ h_\ell}{N_{m,\ell}}.
	\end{split}
\end{equation}
Combining \eqref{eq:c1ineq1interact}, \eqref{eq:c1ineq2interact} and \eqref{eq:c1ineq5interact} yields the result.
\end{proof}

%\stodo{\ref{lm:iteratedmlmcintegrabilityinteract} and \ref{prop:nonintcomplexitywemfZtinteract} still needs to be checked.}
The complete algorithm
% \footnote{In the numerical section, we show that a slight improvement of the performance can be achieved by setting both $\{L_m\}_{m}$ (by varying $L$ to depend on the Picard step $m$) and $\{N_{m,\ell}\}_{m}$ to be increasing. Although the non-asymptotic error is slightly improved, this algorithm is trickier to implement and requires further interpolation.} 
 consists of a sequence of nested MLMC estimators $\Big\{\langle \mathcal{M}^{(m)} , P(x,\cdot) \rangle \Big\}_{m=1,\ldots,M}$ and its error analysis is presented in Theorem \ref{lm:mseboundfinalinteract}. Note that we iterate the algorithm by replacing $P$ by the component real-valued functions $ \{ b_i \}_{1 \leq i \leq d}$ and $\{ \sigma_{i,j}  \}_{1 \leq i \leq d, 1 \leq j \leq r}$.

\subsection{Proof of Theorem \ref{lm:mseboundfinalinteract}} \label{sec mseboundfinalinteract}
 
%\begin{theorem}\label{lm:mseboundfinalinteract}
%Assume  \hkreg \, and \hLaw .  Fix $M>0$ and let $P \in C^2_b (\bR^d)$.  Define
%$
%MSE_t^{(M)}(P) \defeq \bE[( \langle \mathcal{M}^{(M)}_{t}, P \rangle - \bE[P(X_{t})] )^2].
%$
%Then there exists a constant $c>0$ (independent of the choices of $M$, $L$ and $\{N_{m,\ell}\}_{m,\ell}$) such that for every $t \in [0,T]$,
%\[
%	MSE_{\eta_L(t)}^{(M)} (P)\leq c\bigg\{h_L^2+\sum_{m=1}^{M}\frac{c^{M-m}}{(M-m)!}\cdot\sum_{\ell=0}^L\frac{h_\ell}{N_{m,\ell}} +\frac{c^{M-1}}{M!}\bigg\}.
%\]
%\end{theorem}
\begin{proof}
First, the assumption that $Y^{i,0,\ell} = X_0$ gives
\begin{align}\label{eq:lmstartinglevel1interact}
	\sup_{0\leq t\leq T}\int_{\R^d}\bE\bigg[\Big| \bE[b(x,X_{\eta_L(t)})]- \langle \mathcal{M}^{(0)}_{\eta_L(t)},  b(x,\cdot) \rangle   \Big|^2 \quad \quad \quad \quad \quad \quad \quad   \\
	 \quad \quad \quad \quad \quad \quad  \quad \quad  + \Big\| \bE[\sigma(x,X_{\eta_L(t)})]-\langle \mathcal{M}^{(0)}_{\eta_L(t)},  \sigma(x,\cdot) \rangle  \Big\|^2\bigg]\mu^{\bar{Z}^L}_{\eta_L(t)}(dx) \leq c. \nonumber 
\end{align} 
Fixing $M>0$ and $P \in C^2_b (\bR^d)$,  we set 
\begin{equation}\label{eq:lmasequencesettinginteract}
	a_t^{(m)} \defeq\begin{cases}
       & \bE\bigg[\bigg( \langle \mathcal{M}^{(m)}_{\eta_L(t)},  P \rangle  - \bE[P(X_{\eta_L(t)})] \bigg)^2\bigg],\quad m=M,\\
       &\displaystyle\displaystyle \int_{\R^d}\bE\bigg[ \Big|\langle \mathcal{M}^{(m-1)}_{\eta_L(t)},  b(x,\cdot) \rangle- \bE[b(x,X_{\eta_L(t)})] \Big|^2\\
       &\hspace{1cm}+\Big\| \langle \mathcal{M}^{(m-1)}_{\eta_L(t)},  \sigma(x,\cdot) \rangle  - \bE[\sigma(x,X_{\eta_L(t)})] \Big\|^2 \bigg] \mu^{\bar{Z}^L}_{\eta_L(t)}(dx), \quad m \leq M-1. \\
\end{cases} 
\end{equation}
From Lemma \ref{lm:c1nonintinteract}, we observe that
\begin{align}\label{eq:lmrecurrent}
	a_t^{(m)} \leq c\bigg(b^{(m)} + \int_0^t a_s^{(m-1)}ds\bigg),\quad \quad \forall m \in \{ 1,2,\ldots, M \},
\end{align}
where $b^{(m)}=h_L^2+\sum_{\ell=0}^L\frac{h_\ell}{N_{m,\ell}}.$ Then one can easily show that 
\begin{align}\label{eq:lmrecurrent1interact}
	\sup_{0\leq t\leq T}a_t^M &\leq \sum_{m=0}^{M-1}b^{(M-m)}\dfrac{(cT)^{m}}{m!}+\bigg(\sup_{0\leq s\leq T}a_s^{(0)} \bigg)\cdot\frac{(cT)^M}{M!}.
\end{align}
Inequalities \eqref{eq:lmstartinglevel1interact} and \eqref{eq:lmrecurrent1interact} conclude the proof.
\end{proof}
We are now in a position to present the complexity theorem for iterated MLMC estimators of $\{\bE[P(X_{\eta_L(t)})]\}_{t\in[0,T]}$.
\begin{theorem}\label{thm:c2nonint}
Assume  \hkreg \, and \hLaw .  Fix $M>0$ and let $P \in C^2_b (\bR^d)$.  Then there exists some constant $c>0$  (independent of the choices of $M$, $L$ and $\{N_{m,\ell}\}_{m,\ell}$) such that for any $\eps<e^{-1}$, there exist $M$, $L$ and $ \{N_{m,\ell}\}_{m,\ell}$ such that for every $t\in[0,T]$,
\[
	MSE_{\eta_{L}(t)}^{(M)} (P) \defeq  \bE\bigg[\Big( \langle \mathcal{M}^{(M)}_{\eta_L(t)},  P \rangle  - \bE[P(X_{\eta_{L}(t)})] \Big)^2\bigg] \leq c \, \eps^2,
\]
and computational complexity is of the order $\eps^{-4}|\log\eps|^{3}$.
\end{theorem}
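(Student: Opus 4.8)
The plan is to optimise the free parameters $L$, $M$ and $\{N_{m,\ell}\}$ appearing in the error bound of Theorem \ref{lm:mseboundfinalinteract} so as to force the mean-square error below $c\epsilon^2$ at minimal cost, and then to add up that cost. That bound splits into three pieces: the Euler bias $h_L^2$, the weighted MLMC variance $\sum_{m=1}^{M}\frac{c^{M-m}}{(M-m)!}\sum_{\ell=0}^{L}\frac{h_\ell}{N_{m,\ell}}$, and the Picard iteration error $\frac{c^{M-1}}{M!}$. I would control each in turn. First, taking $L=\lceil\log_2(T/\epsilon)\rceil$ gives $h_L=T2^{-L}\le\epsilon$, hence $h_L^2\le\epsilon^2$ with $L=O(|\log\epsilon|)$. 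Second, since the factorial eventually dominates the exponential, there is a least integer $M=M(\epsilon)$ with $\frac{c^{M-1}}{M!}\le\epsilon^2$, and $M=O(|\log\epsilon|)$; the point of the allocation below is that $M$ will \emph{not} enter the leading order of the cost.

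The delicate step is the choice of the particle numbers, which must be coupled to the cost model. To advance one particle at Picard step $m$ over its $2^\ell$ time steps, each step needs one evaluation of the empirical measure $\mathcal{M}^{(m-1)}$, costing $O(S_{m-1})$ where $S_{m-1}:=\sum_{\ell'}N_{m-1,\ell'}$ is the support size of the previous iterate. Hence the cost of step $m$ is $W_m S_{m-1}$ with $W_m:=\sum_\ell N_{m,\ell}2^\ell$, and the total cost is $\sum_{m=1}^{M}W_m S_{m-1}$, a nested product structure echoing the $N^2$ cost of the original system. I would use the separable ansatz $N_{m,\ell}=A_m\,2^{-\ell}$: the profile $2^{-\ell}$ is the standard MLMC optimum, since with $r_\ell=2^{-\ell}$ both the per-level variance sum $\sum_\ell h_\ell/r_\ell$ and the per-level work sum $\sum_\ell r_\ell 2^\ell$ equal $O(L)$, while the scale $A_m$ absorbs the Picard dependence.

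Under this ansatz the variance constraint becomes $T(L+1)\sum_m w_m/A_m\le\epsilon^2$ with $w_m=\frac{c^{M-m}}{(M-m)!}$, and the total cost reduces to $O(L)\sum_m A_mA_{m-1}$. I would then set $A_m=\tfrac{1}{\tau}\rho^{M-m}$ for a fixed $\rho\in(0,1)$ and $\tau\sim\epsilon^2/(L+1)$. The geometric decay in $m$ is matched to the factorial damping of $w_m$: the constraint sum $\sum_m w_m\rho^{-(M-m)}\le\tau^{-1}e^{c/\rho}$ stays $O(\tau^{-1})$, so the variance budget is met, while $\sum_m A_mA_{m-1}=\frac{\rho}{\tau^2}\sum_m\rho^{2(M-m)}=O(\tau^{-2})=O((L+1)^2\epsilon^{-4})$ converges independently of $M$. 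Multiplying by the $O(L)$ work factor gives total cost $O((L+1)^3\epsilon^{-4})=O(\epsilon^{-4}|\log\epsilon|^3)$, and since each of the three error pieces is $\le c\epsilon^2$ by construction and the constant in Theorem \ref{lm:mseboundfinalinteract} is uniform in $t$ and independent of the parameters, $MSE_{\eta_L(t)}^{(M)}(P)\le c\epsilon^2$ holds for every $t\in[0,T]$.

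The main obstacle is exactly the nested product cost $\sum_m W_m S_{m-1}$. A naive uniform allocation $A_m\equiv A$ yields $\sum_m A_mA_{m-1}=O(MA^2)$ and so an extra factor $M\sim|\log\epsilon|$, degrading the bound to $\epsilon^{-4}|\log\epsilon|^4$. Removing that factor rests on the observation that the factorial weights $\frac{c^{M-m}}{(M-m)!}$ permit the early-iteration counts to decay geometrically without breaching the variance budget, so that the cost is dominated by the final $O(1)$ Picard steps. The careful bookkeeping therefore lies in verifying simultaneously that the geometric allocation respects the weighted variance constraint and that the level profile $2^{-\ell}$ stays optimal once the per-step work is weighted by the product structure.
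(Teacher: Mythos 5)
Your proposal is correct and follows essentially the same route as the paper's proof: both take $M,L\sim|\log\eps|$, allocate $N_{m,\ell}\propto h_\ell=T2^{-\ell}$ across levels, and exploit the factorial damping of the Picard weights $\frac{c^{M-m}}{(M-m)!}$ to let the early-iteration particle counts decay, which is exactly what removes the extra factor of $M$ from the nested cost $\sum_m W_m S_{m-1}$ and yields $\eps^{-4}|\log\eps|^3$. The only difference is the concrete decay profile in $m$ --- your geometric $A_m\propto\rho^{M-m}$ versus the paper's inverse-factorial weights of Lemma \ref{lm:verficationepsm} --- and both satisfy the required summability conditions (the paper's (C1)--(C3)).
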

\begin{proof}

The cost of obtaining $\langle \mathcal{M}^{(M)}_{\eta_L(t)},  P \rangle$ involves $M$ iterations. In each iteration, one performs the standard MLMC algorithm, where the cost of approximating the law in the drift and diffusion coefficients is $\sum_{\ell'=0}^{L} N_{m-1,\ell'}$. Hence the overall cost $C:=C( M,L, \{N_{m,\ell}\}_{m,\ell} )$ of the algorithm is  
\begin{equation}\label{eq:constraintmain1interact2}
   C
	= \textstyle \sum_{\ell=0}^{L} h_\ell^{-1} N_{1,\ell}+\sum_{ m=2}^M \sum_{\ell=0}^{L} \bigg( h_\ell^{-1} N_{m,\ell}\sum_{\ell'=0}^{L} N_{m-1,\ell'} \bigg). 
\end{equation}
For convenience, we use the notation $x\lesssim y$ to denote that there exists a constant $c$  such that $x\leq c\ y$.
We shall establish specific values $M^{*},L^{*}, \{N^{*}_{m,\ell}\}_{m,\ell}$ (depending on $\epsilon$) such that the mean-square error satisfies \begin{equation} \textstyle	\sum_{m=1}^{M^{*}}\frac{c^{M^{*}-m}}{(M^{*}-m)!}\big(h_{L^{*}}^2+\sum_{\ell=0}^{L^{*}}\frac{h_\ell}{N^{*}_{m,\ell}} \big)+\frac{c^{M^{*}-1}}{M^{*}!}\lesssim\eps^2\label{eq:constraintcom1interact2}
\end{equation}
and show that corresponding computational complexity is of order $\eps^{-4}|\log\eps|^{3}$.
Firstly, we define 
\begin{equation}\label{eq:defMstar}
    M^{*}:= \left \lfloor \log(\eps^{-1}) \right \rfloor
    \implies c^{M^{*}-1} (M^{*}!)^{-1} \lesssim \epsilon^2\,
\end{equation}  
by Stirling's approximation. For $m \in \{1, \ldots, M^*\}$, we define $\eps_m^2\defeq w_m \eps^2$, for some sequence $\{w_m\}_{m=1}^{M^*}$ (depending on $M^*$ and $\epsilon$) which satisfies the following conditions:
\begin{itemize}
\item[(C{1})] Minimum condition: For each $m$, $w_m\geq w_{M^*} = 1$;
\item[ (C{2})] Weight condition: $\sum_{m=1}^{M^{*}}\frac{c^{M^{*}-m}}{(M^{*}-m)!}w_m \leq K$;
\item[(C{3})] Cost condition: $  \sum_{m=1}^{M^{*}} w^{-1}_m \leq K$,
\end{itemize}
for some constant $K>0$. (See Lemma \ref{lm:verficationepsm} for a concrete example.) 
Subsequently, we define 
\begin{align}\label{eq:conepsm1}    
 L^* := \max_{1 \leq m \leq M^*} L^{*}_m,
\quad 
 L^{*}_m:= \begin{cases} 
      \big| \left \lfloor \log(\eps_m^{-1}) \right \rfloor \big|, & {\eps}_m \leq e, \\
      1, & {\eps}_m > e. \\
         \end{cases}
\end{align}
We also define
\begin{align}\label{eq:complexityNkl}
N^{*}_{m,\ell}:=  \left \lceil  \eps_{m}^{-2} (L^{*}+1)h_\ell \right \rceil, \quad \ell \in \{ 0,\ldots, L^{*} \}, \quad m \in \{ 1, \ldots, M^* \}. 
\end{align}
Note that $ h_{L^{*}} \lesssim \eps_m$,  for any $m \in \{1, \ldots, M^*\}$. To see this, we show that $h_{L^*_m} \lesssim \eps_m$ by considering the following three cases.
\begin{enumerate}
    \item Case I: $\eps_m > e$. In this case,
    $$ h_{L^*_m} = T2^{-L^*_m} = T2^{-1} = \Big( \frac{ T2^{-1}}{e} \Big) e < \Big( \frac{ T2^{-1}}{e} \Big) \eps_m.$$ 
    \item Case II: $1 \leq \eps_m \leq e$. In this case,
    $$ h_{L^*_m} = T2^{-L^*_m} = T2^{\left \lfloor \log ( \eps^{-1}_m) \right \rfloor} =T2^{- \log ( \eps_m) } \leq T \leq T \eps_m. $$ 
    \item Case III: $0 < \eps_m < 1$. Without loss of generality, we assume that $T \leq \frac{1}{2}$. (We can scale $T$ by an appropriate factor if it is greater than $\frac{1}{2}$.) In this case,
    $$ \log(\eps_m) \leq \bigg( \frac{1}{\log 2} \bigg) \log(\eps_m) - \frac{ \log (2T)}{\log 2} = \frac{\log (\frac{\eps_m}{2T})}{\log 2} = \log_2 \Big( \frac{\eps_m}{2T} \Big),$$
    which implies that
    $$ h_{L^*_m} = T2^{-L^*_m} = T2^{- \left \lfloor \log (\eps^{-1}_m) \right \rfloor} \leq T2^{-   \big( \log (\eps^{-1}_m) -1 \big)} = 2T 2^{ \log (\eps_m)} \leq \eps_m.$$ 
\end{enumerate}
We can therefore observe that
\begin{eqnarray}
 && \sum_{m=1}^{M^{*}}\frac{c^{M^{*}-m}}{(M^{*}-m)!}\bigg(h_{L^{*}}^2 +\sum_{\ell=0}^{L^{*}}\frac{h_\ell}{N^{*}_{m,\ell}} \bigg) \nonumber \\
 & \leq & \sum_{m=1}^{M^{*}}\frac{c^{M^{*}-m}}{(M^{*}-m)!}\bigg(h_{L^{*}}^2 +\sum_{\ell=0}^{L^{*}}\frac{h_\ell}{\eps_{m}^{-2} (L^{*}+1)h_\ell } \bigg) \nonumber \\
 & \lesssim & \sum_{m=1}^{M^{*}}\frac{c^{M^{*}-m}}{(M^{*}-m)!} \eps^2_m \lesssim \eps^2, \nonumber 
\end{eqnarray}
by property (C2). Combining this estimate with \eqref{eq:defMstar}, we conclude that the constraint \eqref{eq:constraintcom1interact2} is satisfied.

It remains to compute the complexity of the cost under the values $M^{*},L^{*}, \{N^{*}_{m,\ell}\}_{m,\ell}$.
\begin{eqnarray}
C&=& \sum_{\ell=0}^{L^{*}} \bigg( h_\ell^{-1} \left \lceil  \eps_{1}^{-2} (L^{*}+1)h_\ell \right \rceil \bigg) + \sum_{ m=2}^{M^{*}} \sum_{\ell=0}^{L^{*}} \bigg( h_\ell^{-1} \left \lceil  \eps_{m}^{-2} (L^{*}+1)h_\ell \right \rceil  \nonumber \\
&& \sum_{\ell'=0}^{L^{*}} \left \lceil  \eps_{m-1}^{-2} (L^{*}+1)h_{\ell'} \right \rceil \bigg) \nonumber \\
&\lesssim &\sum_{\ell=0}^{L^{*}} \bigg( h_\ell^{-1} \Big(  \eps_{1}^{-2} (L^{*}+1)h_\ell +1 \Big) \bigg) + \sum_{ m=2}^{M^{*}} \sum_{\ell=0}^{L^{*}} \bigg( h_\ell^{-1} \Big(  \eps_{m}^{-2} (L^{*}+1)h_\ell +1 \Big)  \nonumber \\
&&  \Big(   \eps_{m-1}^{-2} (L^{*}+1) + (L^{*}+1) \Big) \bigg) \nonumber \\
& \lesssim& \eps^{-2}(L^{*}+1)^2+ \sum_{m=2}^{M^{*}} \bigg(   \eps_m^{-2}\eps_{m-1}^{-2} (L^{*}+1)^3 + \eps_m^{-2} (L^{*}+1)^3+ \nonumber \\
&& \eps^{-1} (L^{*}+1)^2 \eps_{m-1}^{-2} +  \eps^{-1} (L^{*}+1)^2\bigg) \nonumber \\
& \lesssim& \eps^{-2}|\log(\eps^{-1})|^2+ |\log(\eps^{-1})|^3 \sum_{m=2}^{M^{*}}    \eps_m^{-2}\eps_{m-1}^{-2} + |\log(\eps^{-1})|^3\sum_{m=2}^{M^{*}} \eps_m^{-2}   \nonumber \\
&& + \eps^{-1} |\log(\eps^{-1})|^2 \sum_{m=2}^{M^{*}} \eps_{m-1}^{-2} +  \eps^{-1} |\log(\eps^{-1})|^2 M^*, \label{eq:complexitycostineq1}
\end{eqnarray}
where, we have used in the last two estimates the bounds $L^* \leq \log( \eps^{-1}) $ (by property (C1)) and 
$ h^{-1}_{\ell} = T^{-1} 2^{\ell} \leq T^{-1} 2^{L^{*}} \lesssim  2^{\log( \eps^{-1})} \lesssim \eps^{-1}$. 
Finally, by properties (C1) and (C3) of $\{w_m\}_{m=1}^{M^*}$, together with \eqref{eq:complexitycostineq1} and \eqref{eq:defMstar}, we conclude that
$ C \lesssim \eps^{-4} | \log(\eps)|^3.$
\end{proof}

%\begin{remark}
% For McKV-SDEss with additive noise, the variance can be controlled by $h_\ell^2$ instead of $h_\ell$. Consequently, we are able to optimally set, for $1 \leq m\leq M^{*}$ and $0\leq \ell\leq L^{*}$,
%\begin{align}\label{eq:complexityNml}
%N^{*}_{m,\ell} := \left \lceil \eps_m^{-2} h_\ell^{\frac{3}{2}} \right \rceil.
%\end{align}
%By the same computation as in \eqref{eq:complexitycostineq1}, the complexity is shown to be of the order $\eps^{-4}$.
%\end{remark}
%\stodo{New remark about $L_m$: The ability to vary $L_m$ does not influence the final order of complexity. So maybe it is fine to do all analysis under the same $L_m$ for all Picard steps, which means we do not need to change Theorem 3.6.}

%\stodo{Another choice in power should be investigated as well. For the power re-weighting, it seems even possible to get rid of log-term coming from $L$ (under checking). If this is true, this implies for non-interacting case, we can in general achieve $\eps^2$ even for McKV-SDEs with non-additive noise (Too good to be true? The cost of the last Picard must be of order $\eps^{-4}|\log\eps|^3$ (or $\eps^{-2}|\log\eps|^2$) because $\gamma=\beta=1$ in general.)}
%For subsequent analysis, we define (see remark \ref{rmk:epsm} which shows that ) 
%\begin{equation}\label{eq:complexityepsm}
%	\eps^2_m \defeq\begin{cases}
%       &  \frac{\epsilon^2 (M^{*}-m-2)!}{c^{M^{*}-m-2}},\quad m\leq M^{*}-2,\\
%       &	\eps^2 ,\quad \quad \quad \quad \quad \, \, M^{*}-1\leq m \leq M^{*}, \\
%\end{cases} 
%\end{equation}

\subsection{Non-interacting kernels} \label{sec:nonintk}

Here we remark how the theory developed in this work would simplify, if we only treated McKV-SDEs with non-interacting kernels given by 
\begin{equation}\label{eq:defxtm}
dX_t =b \bigg( X_t,\int_{\bR^d }f(y)\, \mu_{t}^{X}(dy) \bigg) \, dt +\sigma \bigg( X_t,\int_{\bR^d }g(y) \, \mu_{t}^{X}(dy) \bigg) \, dW_t, 
\end{equation}
for some continuous functions $b: \mathbb{R}^d \times \mathbb{R}^{q} \rightarrow \mathbb{R}^d$ and $\sigma: \mathbb{R}^d \times \mathbb{R}^q \rightarrow \mathbb{R}^{d\otimes r}$. We  assume \hkreg \, and \hLaw. We also assume that each component function of $f$ and $g$ belongs to the set $C_b^2(\mathbb{R}^d ,\mathbb{R}^{q})$. 
The corresponding MLMC particle system is
\[ 
 dY^{i,m,\ell}_t =  b \Big(  Y^{i,m,\ell}_{\eta_{\ell} (t)}, \langle \mathcal{M}^{(m-1)}_{\eta_{\ell}(t)},  f \rangle  \Big) \, dt + \sigma \Big(  Y^{i,m,\ell}_{\eta_{\ell} (t)}, \langle \mathcal{M}^{(m-1)}_{\eta_{\ell}(t)},  g \rangle \Big) \, dW^{i,m}_t.
\]
To study this case, we adopt the abstract framework with $\overline b(x,\mu) := b(x,\langle \mu, f\rangle) $,  $\overline \sigma(x,\mu) := \sigma(x,\langle \mu, g\rangle) $ and $\cV$ being defined as before. Clearly, this is a special case of  the equation studied so far and hence all the results apply. The main difference stems from the complexity analysis as the term $\sum_{\ell=0}^{L} h^{-1}_{\ell} N_{m, \ell} \sum_{\ell'=0}^{L}  N_{m-1, \ell'} $ in \eqref{eq:constraintmain1interact2} is replaced by $\sum_{\ell=0}^{L} h^{-1}_{\ell} N_{m, \ell} \, \, +$ \\ $\sum_{\ell'=0}^{L}  h^{-1}_{\ell'} N_{m-1, \ell'} $. By performing the same computation as in the proof of Theorem \ref{thm:c2nonint}, we can show that the computational complexity is reduced to the order of $\eps^{-2}|\log\eps|^2$.

\subsection{Plain iterated particle system}

The  proof of the following theorem constitutes a special case of Lemma \ref{lm:c1nonintinteract} and Theorem \ref{lm:mseboundfinalinteract}.
\begin{theorem}\label{lm:mseboundfinalinteractparticle}
Assume  \hkreg \, and \hLaw.  Fix $M>0$ and let $P \in C^2_b (\bR^d)$.  We define the mean-square error as
\[
MSE_t^{(M)}(P) \defeq \bE\bigg[\bigg( \frac{1}{N_{M}}\sum_{i=1}^{N_{M}}P({\overline{Y}}^{i,M}_t)- \bE[P(X_{t})] \bigg)^2\bigg].
\]
Then for every $t\in[0, T]$,
\[
	MSE_{\eta(t)}^{(M)} (P)\leq c\bigg\{h^2+\sum_{m=1}^{M}\frac{c^{M-m}}{(M-m)!}\cdot \frac{1}{N_{m}} +\frac{c^{M-1}}{M!}\bigg\},
\]
for some constant $c>0$ that does not depend on $M$ or $N_1, \ldots, N_M$.
\end{theorem}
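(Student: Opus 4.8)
The plan is to recognise the plain iterated particle system \eqref{eq:xminteract2} as the single-level ($L=0$) specialisation of the iterative MLMC particle system \eqref{eq:contiousYinteract}, and then to re-run the recursion that proves Theorem \ref{lm:mseboundfinalinteract}. First I would fix a single time grid with step $h$ and observe that, when $L=0$, the signed MLMC measure \eqref{eq:defMLMCop} collapses onto a single empirical measure: since the $\ell=-1$ contribution is set to $0$, one has $\langle \mathcal{M}^{(m-1)}_t, P(x,\cdot)\rangle = \frac{1}{N_{m-1}}\sum_{i=1}^{N_{m-1}} P(x,\overline{Y}^{i,m-1}_{\eta(t)})$ at grid points, so \eqref{eq:contiousYinteract} coincides with \eqref{eq:xminteract2} after the identification $N_m = N_{m,0}$. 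The conditional-independence requirement in \hvb is exactly the one built into \eqref{eq:xminteract2}, namely that $\{\overline{Y}^{m-1}\}$ is independent of $(W^{i,m},\overline{Y}^{i,m}_0)$.

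Next I would note that the hypotheses \hvb, \hvreg \, and \hvLip \, of the abstract framework hold for this specialisation, since Lemmas \ref{lm:Regularitycoeffiinteract}, \ref{lm:iteratedmlmcintegrabilityinteract} and \ref{prop:nonintcomplexitywemfZtinteract} were established for arbitrary $L$ and in particular for $L=0$. I would then establish the single-level analogue of Lemma \ref{lm:c1nonintinteract}: the bias part is unchanged and comes straight from Lemma \ref{lm:weakerror1nonintinteract} (contributing the $h^2$ term together with the coefficient-approximation error carried from step $m-1$), while the conditional-variance part \eqref{eq:c1ineq5interact} is supplied by Lemma \ref{lm:varianceinterpvsnoninterpinteract} evaluated at $L=0$. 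The crucial point is that at $L=0$ the multilevel telescoping is absent, so the conditional variance is simply that of a plain empirical mean, $\frac{1}{N_m}\bE|P^{1,0}_{\eta(t)}|^2 \le \frac{c}{N_m}$, the uniform second-moment bound from Lemma \ref{lm:intergrabilitySDErandomcoeffinteract} furnishing a constant $c$ independent of $h$. This yields the one-step recursion $a^{(m)}_t \le c\big(b^{(m)} + \int_0^t a^{(m-1)}_s\,ds\big)$ with $b^{(m)} = h^2 + \frac{1}{N_m}$, where $a^{(m)}_t$ is defined exactly as in \eqref{eq:lmasequencesettinginteract}.

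Finally I would iterate this recursion exactly as in the proof of Theorem \ref{lm:mseboundfinalinteract}: using the starting estimate $\sup_{0\le s\le T} a^{(0)}_s \le c$ (the analogue of \eqref{eq:lmstartinglevel1interact}, valid because $\overline{Y}^{i,0}=X_0$), the unrolled inequality \eqref{eq:lmrecurrent1interact} gives
\[
\sup_{0\le t\le T} a^{(M)}_t \le \sum_{m=0}^{M-1} b^{(M-m)}\frac{(cT)^m}{m!} + \Big(\sup_{0\le s\le T} a^{(0)}_s\Big)\frac{(cT)^M}{M!},
\]
and reindexing the sum, while absorbing $cT$ into the generic constant $c$, produces the three advertised terms $h^2$, $\sum_{m=1}^{M}\frac{c^{M-m}}{(M-m)!}\frac{1}{N_m}$ and $\frac{c^{M-1}}{M!}$.

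The only genuinely non-routine point, and thus the main thing to get right, is the variance bookkeeping at $L=0$: one must read off from Lemma \ref{lm:varianceinterpvsnoninterpinteract} that the level-$0$ contribution scales like $\frac{1}{N_m}$ (controlled by the moment bound of Lemma \ref{lm:intergrabilitySDErandomcoeffinteract}) rather than acquiring the extra factor $h_\ell$ that the level differences $\ell\ge 1$ carry, so that the plain estimator recovers the classical Monte-Carlo rate $O(1/N_m)$ per iteration. Everything else is a transcription of the arguments already used for the multilevel case.
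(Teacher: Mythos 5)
Your proposal is correct and follows exactly the route the paper intends: the paper itself gives no separate argument, merely declaring the theorem a special case of Lemma \ref{lm:c1nonintinteract} and Theorem \ref{lm:mseboundfinalinteract}, and your write-up is the natural expansion of that remark, including the one point that actually needs checking (that the single-level conditional variance is $O(1/N_m)$ via the moment bound rather than carrying an $h_\ell$ factor).
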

The following theorem concerns the computational complexity in the estimation of $\{\bE[P(X_{\eta(t)})]\}_{t\in[0,T]}$, whose proof follows similar procedures as the proof of Theorem \ref{thm:c2nonint} and is omitted.
\begin{theorem}\label{thm:c2nonintmc}
Assume  \hkreg \,  and \hLaw .  Fix $M>0$ and let $P \in C^2_b (\bR^d)$. Then there exists some constant $c>0$ $($independent of the choices of $M$ and $\{N_{m}\}_{1 \le m\le M})$ such that  for any $\eps<e^{-1}$, there exist $M$ and $ \{N_{m}\}_{0\le m\le M}$ such that for every $t\in[0,T]$,
\begin{equation} \label{eq mse M}
	MSE_{\eta(t)}^{(M)} (P) \defeq  \bE\bigg[(\frac{1}{N_{M}}\sum_{i=1}^{N_{M}}P({\overline{Y}}^{i,M}_{\eta(t)}) - \bE[P(X_{\eta(t)})])^2\bigg] \leq c \eps^2,
\end{equation}
and computational complexity $C$ is of the order $\eps^{-5}$.
\end{theorem}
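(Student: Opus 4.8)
The plan is to combine the mean-square-error bound of Theorem \ref{lm:mseboundfinalinteractparticle} with an explicit, $\eps$-dependent choice of the number of Picard iterations $M$, the (single) time step $h$, and the particle counts $\{N_m\}$, following verbatim the strategy of the proof of Theorem \ref{thm:c2nonint} but with a single time grid in place of the level hierarchy. First I would record the cost of the plain iterated particle system \eqref{eq:xminteract2}: at Picard step $m$ one propagates $N_m$ particles over $h^{-1}$ time steps, and each step evaluates the drift and diffusion against the empirical measure built from the $N_{m-1}$ particles of the previous step, costing $O(h^{-1}N_mN_{m-1})$ operations. Thus, in analogy with \eqref{eq:constraintmain1interact2} but without the $\ell$-summation,
\[
 C = h^{-1}N_1 + \sum_{m=2}^{M} h^{-1} N_m N_{m-1}.
\]

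Next I would balance the three error contributions of Theorem \ref{lm:mseboundfinalinteractparticle} separately. Taking $h \sim \eps$ makes $h^2 \lesssim \eps^2$ with $h^{-1}\sim\eps^{-1}$. Setting $M^* := \lfloor \log(\eps^{-1}) \rfloor$ and invoking Stirling's approximation exactly as in \eqref{eq:defMstar} yields $c^{M^*-1}/M^*! \lesssim \eps^2$ for the iteration error. For the variance sum I would introduce weights $\eps_m^2 := w_m \eps^2$ with $\{w_m\}_{m=1}^{M^*}$ satisfying the minimum, weight and cost conditions (C1)--(C3) of Theorem \ref{thm:c2nonint} (see Lemma \ref{lm:verficationepsm}), and set $N_m^* := \lceil \eps_m^{-2}\rceil$. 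Then, by (C2),
\[
 \sum_{m=1}^{M^*}\frac{c^{M^*-m}}{(M^*-m)!}\frac{1}{N_m^*} \leq \eps^2 \sum_{m=1}^{M^*}\frac{c^{M^*-m}}{(M^*-m)!} w_m \lesssim \eps^2,
\]
so that $MSE^{(M^*)}_{\eta(t)}(P) \lesssim \eps^2$ uniformly in $t$.

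Finally I would bound the cost under this choice. Since $N_m^* \sim w_m^{-1}\eps^{-2}$ (up to rounding) and (C1) gives $w_m \geq w_{M^*}=1$, hence $w_m^{-1}\leq 1$, the product obeys $N_m^* N_{m-1}^* \lesssim w_m^{-1}w_{m-1}^{-1}\eps^{-4} \leq w_{m-1}^{-1}\eps^{-4}$. Summing and using the cost condition (C3),
\[
 \sum_{m=2}^{M^*} N_m^* N_{m-1}^* \lesssim \eps^{-4}\sum_{m=1}^{M^*-1} w_m^{-1} \lesssim \eps^{-4},
\]
while $h^{-1}N_1^* \lesssim \eps^{-3}$. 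Therefore $C \lesssim \eps^{-1}\cdot\eps^{-4} = \eps^{-5}$, as claimed.

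The main obstacle I anticipate is the control of the \emph{product} $N_m^* N_{m-1}^*$ appearing in the cost, which has no counterpart in the scalar variance sum of Theorem \ref{lm:mseboundfinalinteractparticle}. The decisive point is that the weights are normalised so that $w_m \geq w_{M^*}=1$: this lets one factor be absorbed as $w_m^{-1}\leq 1$, collapsing the double-weight product into a single-weight sum governed by (C3). Because the plain system uses one time grid rather than the $L \sim \log(\eps^{-1})$ levels of the MLMC scheme, the $|\log\eps|$ factors of Theorem \ref{thm:c2nonint} do not arise, which is precisely why the rate degrades to the clean exponent $\eps^{-5}$ but without logarithmic corrections.
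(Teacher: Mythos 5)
Your proposal is correct and follows essentially the route the paper intends: the paper omits this proof, stating only that it ``follows similar procedures as the proof of Theorem \ref{thm:c2nonint}'', and your argument is precisely that adaptation --- the MSE bound of Theorem \ref{lm:mseboundfinalinteractparticle} combined with $h\sim\eps$, $M^*=\lfloor\log(\eps^{-1})\rfloor$, the weights $w_m$ of Lemma \ref{lm:verficationepsm}, and the single-grid cost $C=h^{-1}N_1+\sum_{m=2}^{M}h^{-1}N_mN_{m-1}$. Your key observation that $w_m^{-1}\le 1$ collapses the product $N_m^*N_{m-1}^*$ into a single-weight sum controlled by (C3) is exactly the mechanism used in \eqref{eq:complexitycostineq1}; just note that the rounding $\lceil\cdot\rceil\le\cdot+1$ produces lower-order cross terms of size $\eps^{-2}$ and $M^*$ which should be (and are easily) absorbed.
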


%%%%%%%%
%%%%%%%%
%%%%%%%iterative particle system part%%%%%%%%%%%%%%%

%%%%%%%%
%%%%%%%Numerical part%%%%%%%%%%%%%%%

\section{Numerical results}
In this section, we present numerical simulations that confirms that iterative MLMC method achieves one order better computational complexity comparing to classical particle system. Furthermore, numerical experiments indicate that the iterative MLMC method works well even if the coefficients of the McKV-SDEs do not satisfy previously stated regularity and growth assumptions.   We compare the following methods
\begin{itemize}
\item Classical particle system \eqref{eq:sEuler2}, 
\item MC Picard \rom{1} -  \emph{iterative particle system} \eqref{eq:xminteract2} with fixed number of particles $N$ for all Picard steps,
\item  MC Picard \rom{2} -  \emph{iterative particle system} \eqref{eq:xminteract2} with an increasing sequence of particles $\{N_m\}_{m=1,\ldots,M}$  where $N_m  = w_mN_M $ (see the choice of $w_m$ in Lemma \ref{lm:verficationepsm}),
\item Iterated MLMC particle system outlined in Algorithm \ref{Algorithminteract}. 
\end{itemize} 

%The first one is a numerical example of  the Kuramoto model which has desired coefficient functions. We then present two further  examples which does not satisfy our assumptions in Theorem \ref{lm:mseboundfinalinteractparticle} but  we still observe the best performance if using iterative MLMC algorithm. One is an example of McKV-SDE with the polynomial drift that is not Lipschitz and other is that of viscous Burgers equation.

\subsection{Kuramoto model}
First, we provide a numerical example of a one-dimensional stochastic differential equation derived from the Kuramoto model: 
\begin{equation*}%\label{eq:numericalornoninteracting1}
\begin{split}
		dX_t &=\int_\R\sin(X_t-y)\mu^X_t(dy) \, dt + dW_t, \quad \quad   t\in[0,1], \quad \quad X_0=0, \\
		&= \sin(X_t)\int_\R\cos(y)\mu^X_t(dy) - \cos(X_t)\int_\R\sin(y)\mu^X_t(dy) \, dt + dW_t\,.
	\end{split}
\end{equation*}
For the numerical tests we work with the the bottom representation.  We set $P(x)=\sqrt{1+x^2}$. For the initial condition of the iterative algorithm we choose  $Y^{0,\ell}_t\sim N(0,t)$. 

\begin{figure}[!h]
\centering
\begin{subfigure}{.49\textwidth}
  \centering
  \includegraphics[width=.9\linewidth]{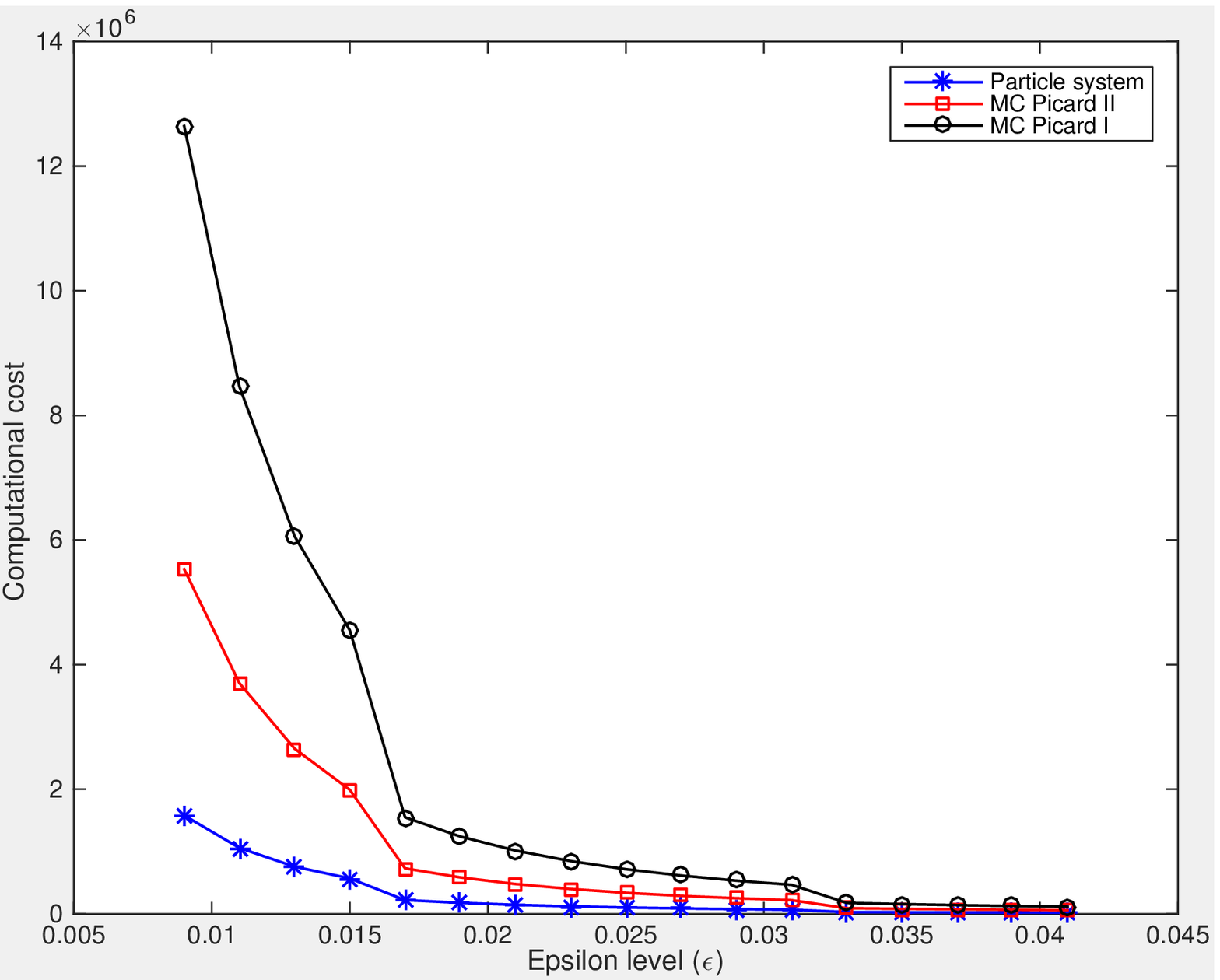}
  \caption{Iterated MC  vs Particle method}
  \label{fig:noninteractingexample11a}
\end{subfigure}
%\begin{subfigure}{.49\textwidth}
%  \centering
%  \includegraphics[width=.9\linewidth]{sep_Chaos_vs_revised_MC_picard_vs_MLMC_cstvserror1.eps}
%  \caption{Iterated MLMC}
%  \label{fig:noninteractingexample11b}
%\end{subfigure}
\begin{subfigure}{.49\textwidth}
  \centering
 \includegraphics[width=.9\linewidth]{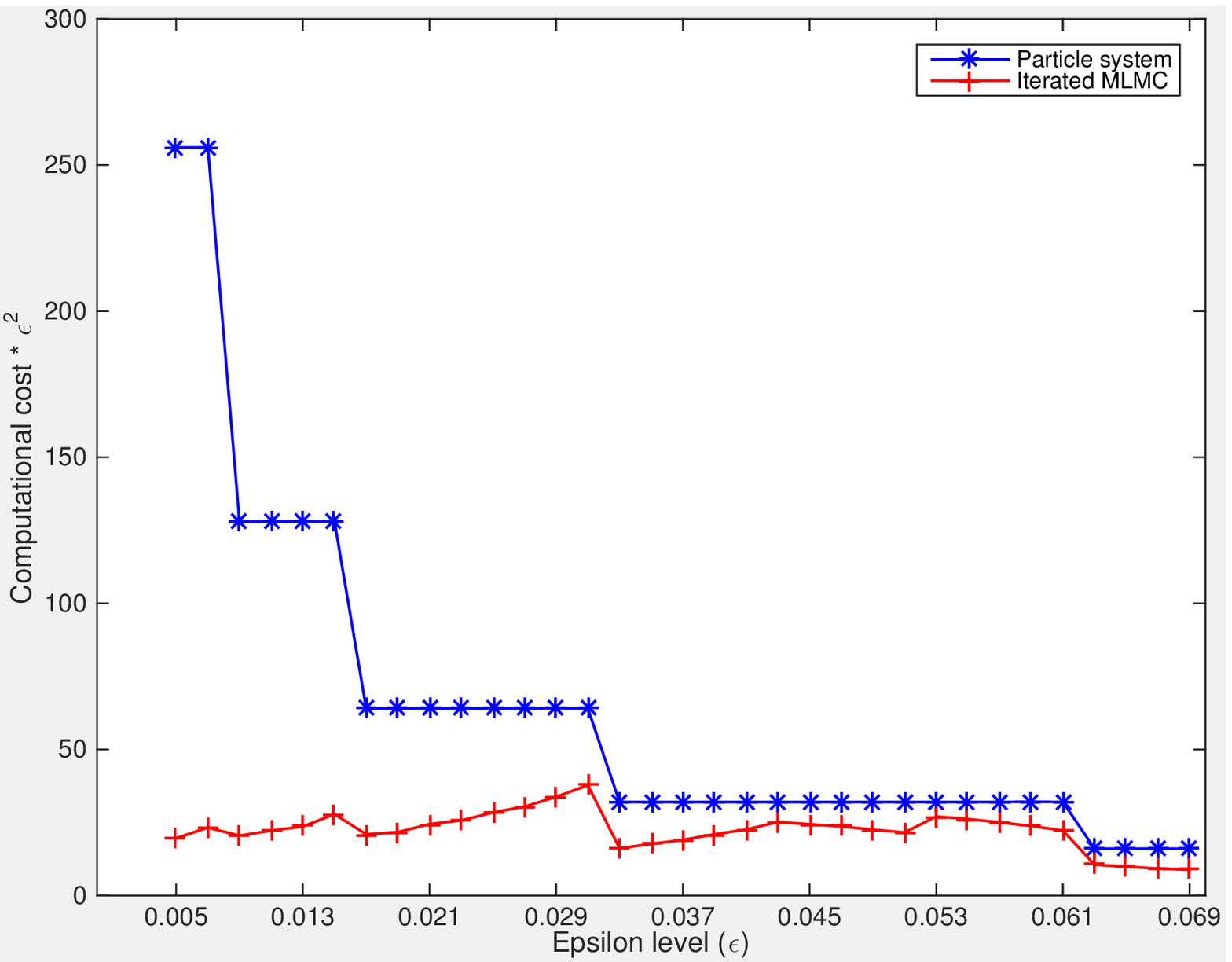}
 \caption{Iterated MLMC vs Particle method}
  \label{fig:noninteractingexample11c}
\end{subfigure}
\centering
\begin{subfigure}{.49\textwidth}
  \centering
  \includegraphics[width=.9\linewidth]{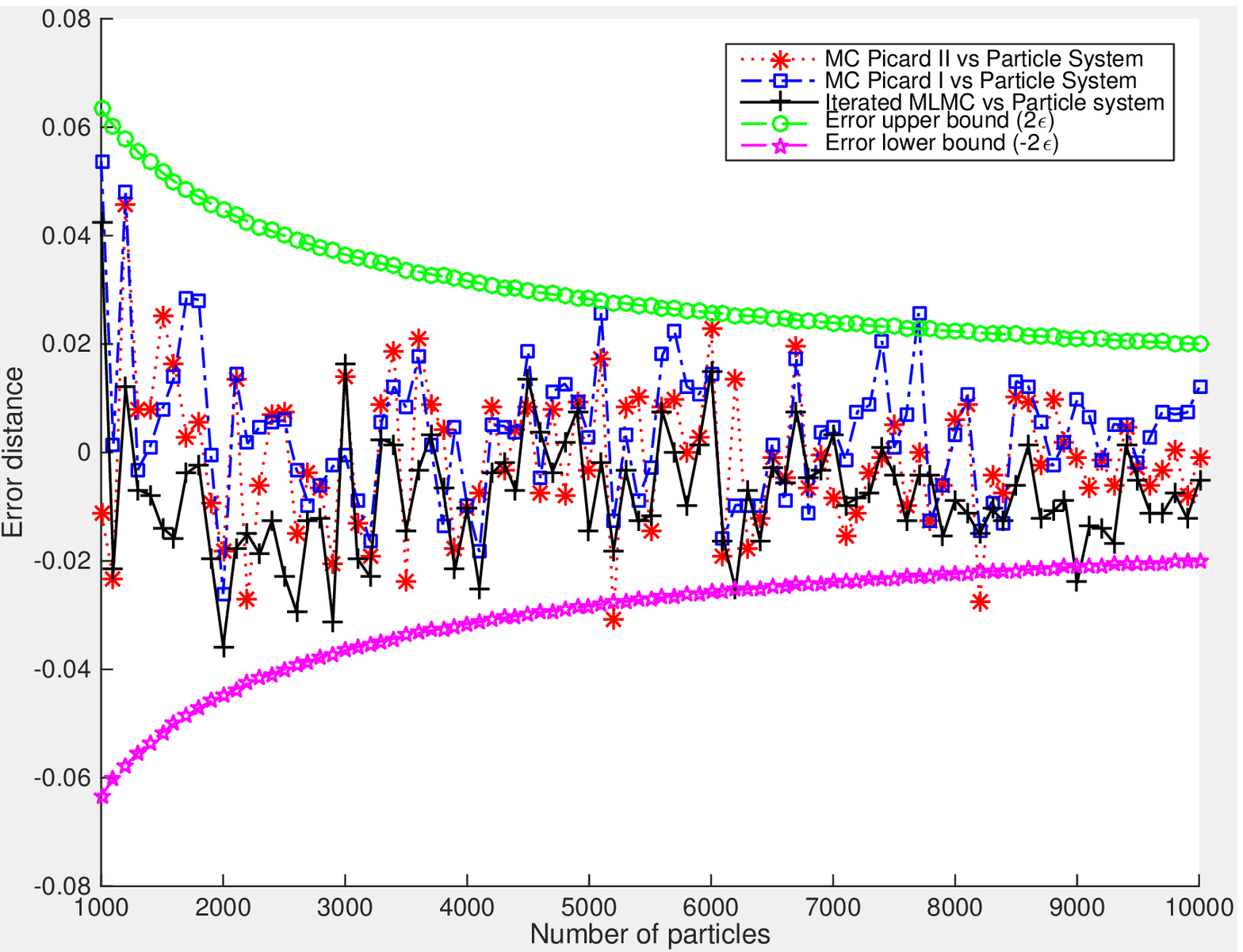}
  \caption{Approximation error}
  \label{fig:noninteractingexample11d}
\end{subfigure}%
\begin{subfigure}{.49\textwidth}
  \centering
  \includegraphics[width=.9\linewidth]{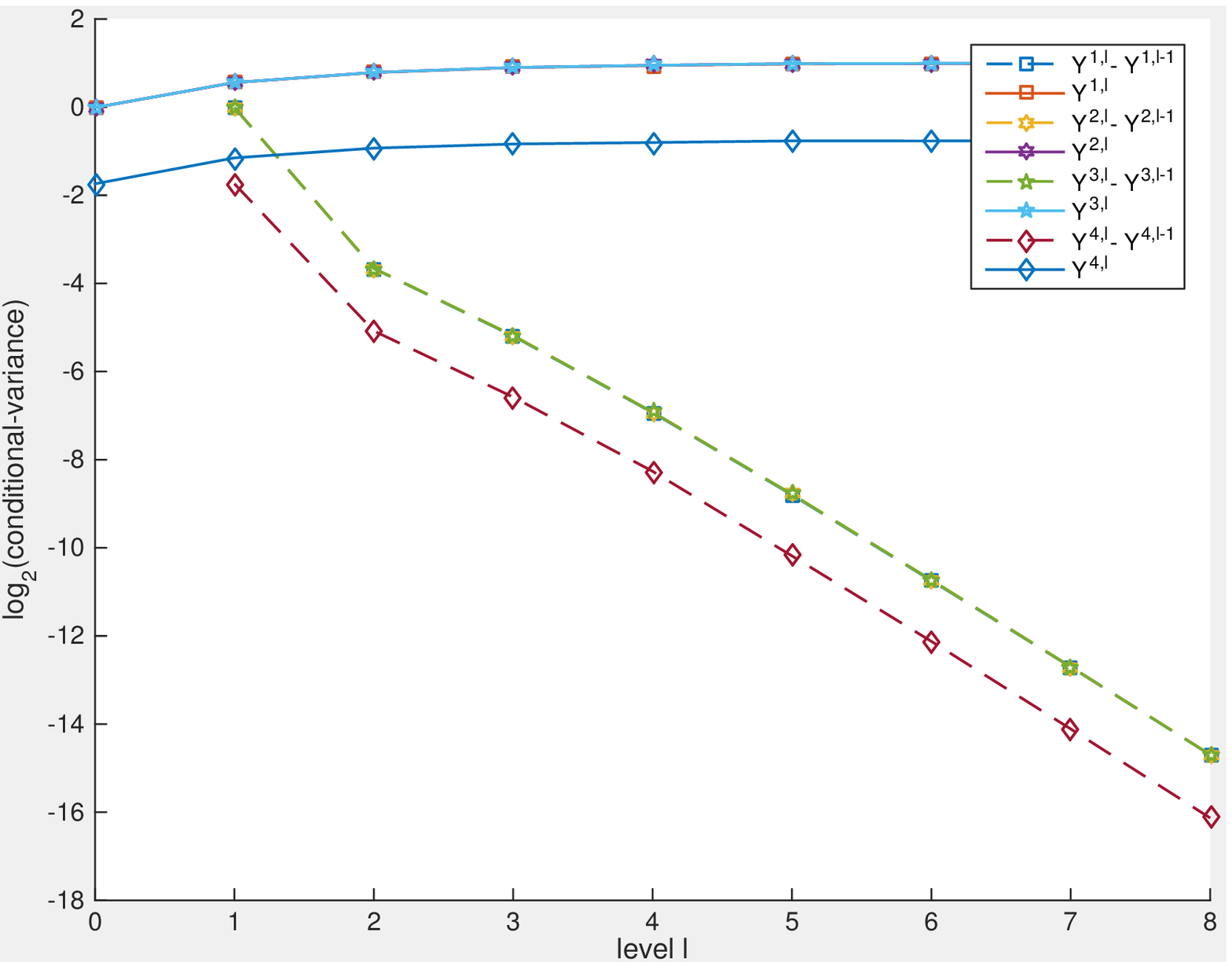}
  \caption{Variance of $\log_2$ against $\ell$ for all Picard steps}
  \label{fig:log2variancenonint}
\end{subfigure}%
\caption{Result of Kuramoto model}
\label{fig:noninteractingexample1}
\end{figure}
Figure \ref{fig:noninteractingexample11a} shows that both MC Picard \rom{1} and MC Picard \rom{2} are less efficient than the classical particle system. In Figure \ref{fig:noninteractingexample11c}, the iterated MLMC particle system achieves computational complexity of order $\eps^{-2}$ (note that here the cost of simulating particle system is $N$ per Euler step and not $N^2$  - see Section \ref{sec:nonintk}). 

 Figure \ref{fig:noninteractingexample11d} illustrates that the approximation error of iterated methods is within $2\eps$ of that of the classical particle system and that it decreases as number of particles increases. 
  
  Figure \ref{fig:log2variancenonint} depicts $\text{Var}[Y_T^{1,m,\ell}|\cM^{(m-1)}]$ and $\text{Var}[Y_T^{1,m,\ell}-Y_T^{1,m,\ell-1}|\cM^{(m-1)}]$ (in log scale) for each Picard step across levels $\ell$ . We see that that the conditional MLMC decays with rate $2$. This is higher than the rate given in Lemma \ref{lm:variance1nonintinteract}, since this example treats SDE with constant diffusion coefficient for which Euler scheme achieves higher strong convergence rate.

%
%Moreover, Figure \ref{fig:noninteractingexample112} confirms numerically that the iterated MLMC method can be improved  by taking a decreasing sequence  $\{\epsilon_m\}_{m=1,\ldots,M}$, as opposed to taking a constant sequence. The improvement is negligible as $\epsilon>e^{-1}$, which supports the assumption on $\eps$ in Theorem \ref{thm:c2nonintmc}.
%\begin{figure}[!htbp]
%\centering
%\begin{subfigure}{.49\textwidth}
%  \centering
%  \includegraphics[width=.9\linewidth]{Chaos_vs_MLMC_vs_varied_MLMC_non_interacting1_2.eps}
%  \caption{Computational complexity against $\epsilon$}
%  \label{fig:noninteractingexample112a}
%\end{subfigure}
%\centering
%\begin{subfigure}{.49\textwidth}
%  \centering
%  \includegraphics[width=.9\linewidth]{Chaos_vs_MLMC_vs_varied_MLMC_non_interacting_errordistance1_2.eps}
%  \caption{Approximation error}
%  \label{fig:noninteractingexample112d}
%\end{subfigure}
%\caption{Constant  against a decreasing sequence (Kuramoto model)}
%\label{fig:noninteractingexample112}
%\end{figure}
% 
 
 % \begin{remark}\label{rmk:deeps}
%Again, from the proof of Lemma \ref{lm:mseboundfinalinteract},  it is feasible to set the decreasing sequence to be $\eps_{m}^2= \eps_M^2 (M-m)!$, in order to obtain the same accuracy.
%\end{remark}
\subsection{Polynomial drift}
We consider the following McKV-SDE:
\begin{equation}\label{eq:defxtm2}
dX_t = (2X_t+\bE[X_t]-X_t\bE[X_t^2]) dt + X_t dW_t, \quad   t\in[0,1],  \quad X_0=1\,.
\end{equation}
Assumption \ref{as 1} is clearly violated. Note that
\begin{equation*}%\label{eq:defxtm2}
\begin{split}
d\bE[X_t] &= (3\bE[X_t] - \bE[X_t]\bE[X_t^2]) dt\, \quad \bE[X_0]=1\\
d\bE[X^2_t] &= (5\bE[X^2_t]+ 2(\bE[X_t])^2 - (\bE[X_t^2])^2) dt\, \quad \bE[X^2_0]=1\,.
\end{split}
\end{equation*}
By solving the above system of ODEs with Euler scheme we obtain particle free approximation to the solution of \eqref{eq:defxtm2} that we use as a reference for iterative MLMC method. Figure \ref{fig:interactingexample11}, shows that the iterated MLMC achieves computational complexity of order $\eps^{-2}$. Figure \ref{fig:interactingexample12} indicates that the approximation error of iterated methods is within less than $2\eps$ of that of the reference value and that it decreases as number of particles increases.

 \begin{figure}[!h]
\centering
\begin{subfigure}{.49\textwidth}
  \centering
  \includegraphics[width=.9\linewidth]{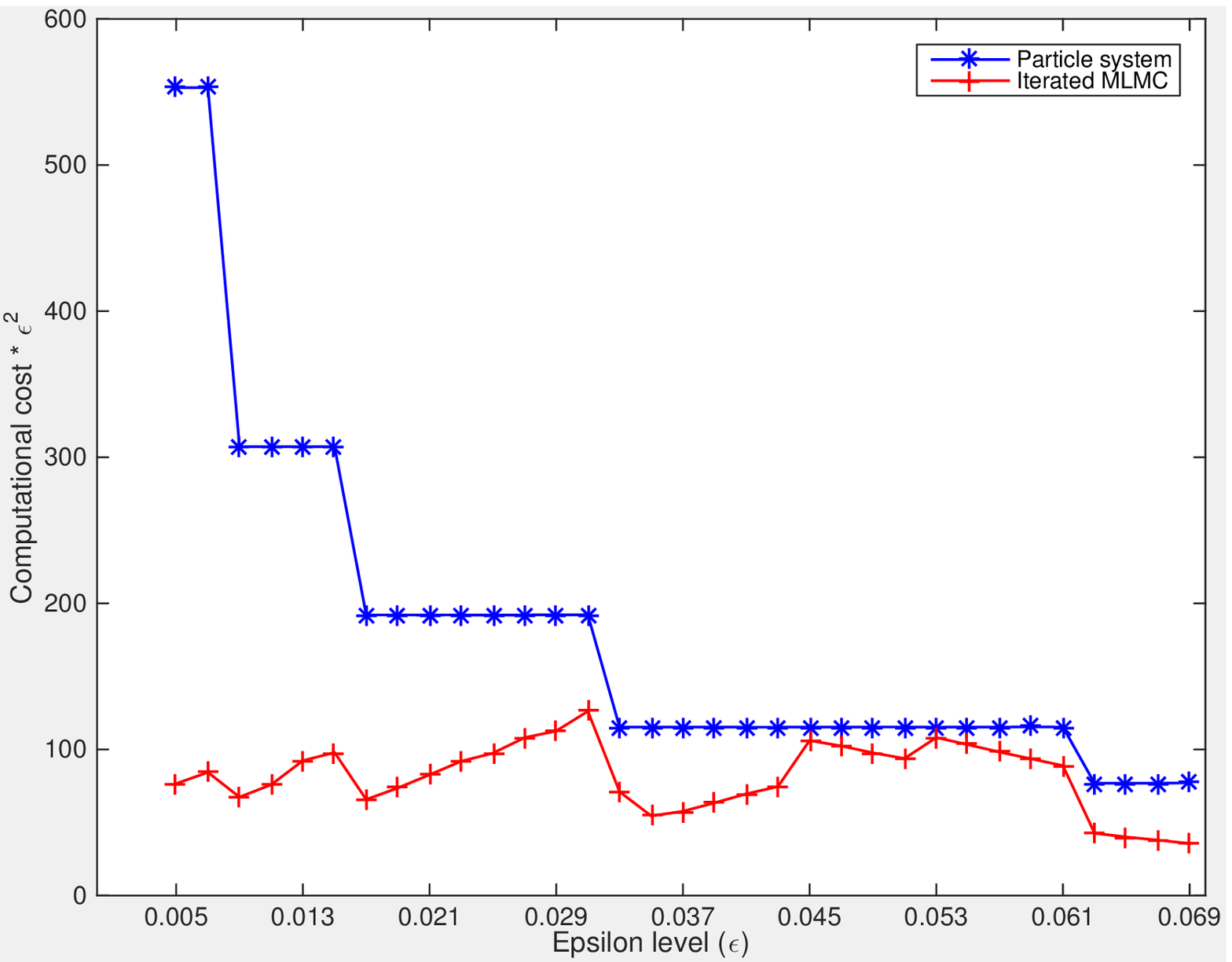}
  \caption{Iterated MLMC  vs Particle sytem}
  \label{fig:interactingexample11}
\end{subfigure}
%\begin{subfigure}{.49\textwidth}
%  \centering
%  \includegraphics[width=.9\linewidth]{sep_Chaos_vs_revised_MC_picard_vs_MLMC_cstvserror1.eps}
%  \caption{Iterated MLMC}
%  \label{fig:noninteractingexample11b}
%\end{subfigure}
\begin{subfigure}{.49\textwidth}
  \centering
 \includegraphics[width=.9\linewidth]{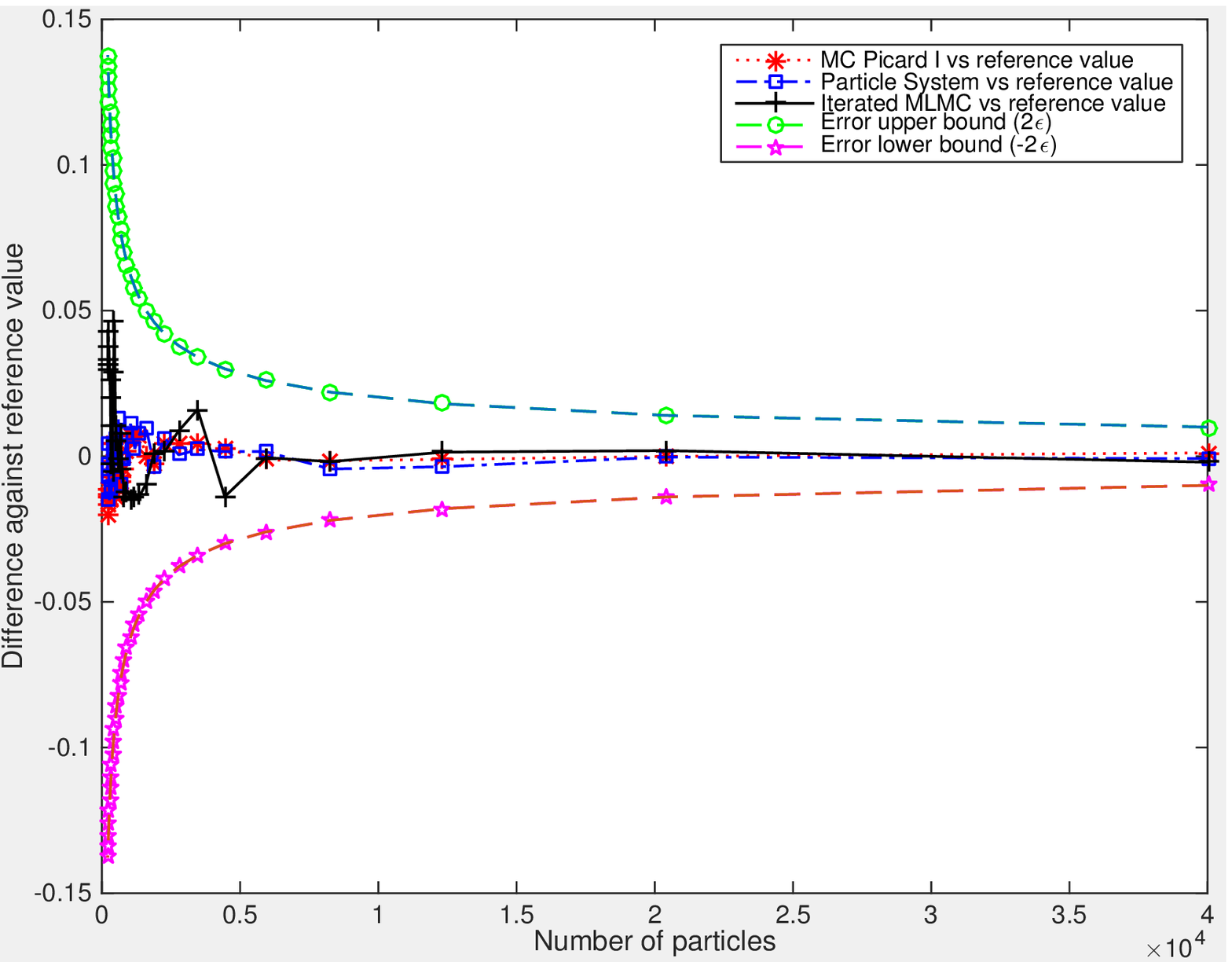}
 \caption{Approximation error}
  \label{fig:interactingexample12}
\end{subfigure}
\caption{Result of Polynomial drift}
\label{fig:interactingexample1}
\end{figure}

\subsection{Viscous Burgers equation}
Last, we perform a numerical experiment for the discontinuous case (not Lipschitz) corresponding to the Burgers equation (\cite{bossy1997comparison}) given by
\begin{equation}\label{eq:defxtm3}
dX_t = \bar{F}_t(X_t) dt + \dfrac{1}{4} dW_t, \quad \quad   t\in[0,1], \quad \quad X_0=0,
\end{equation}
where $\bar{F}_t(x) = \bP(X_t\geq x)$. Linking to the Fokker-Planck equation of $X_t$, it is important to notice that $\bar{F}_t(x)$ is the solution to the viscous Burgers equation:
\[
	\partial_t v (t,x) = \dfrac{1}{32}\partial_{xx} v(t,x) - v(t,x)\partial_x v(t,x).
\]
where $\bar{F}_0(x) = \1_{\{x\leq 0\}}$ since the initial condition $X_0=0$. The Cole-Hopf transformation results in, for any $t\in(0,1]$
\[
	\bar{F}_t(x) = \dfrac{\mathcal{N}(\dfrac{4t-4x}{\sqrt{t}})}{\exp(16x-8t)\mathcal{N}(\dfrac{4x}{\sqrt{t}})+\mathcal{N}(\dfrac{4t-4x}{\sqrt{t}})},
\]
where $\mathcal{N}(x)= \int_{-\infty}^x\exp(\dfrac{-y^2}{2})\dfrac{dy}{\sqrt{2\pi}}$. Then we take $\bar{F}_1(0.5)=0.5$ as the reference value.
 \begin{figure}[!h]
\centering
\begin{subfigure}{.49\textwidth}
  \centering
  \includegraphics[width=.9\linewidth]{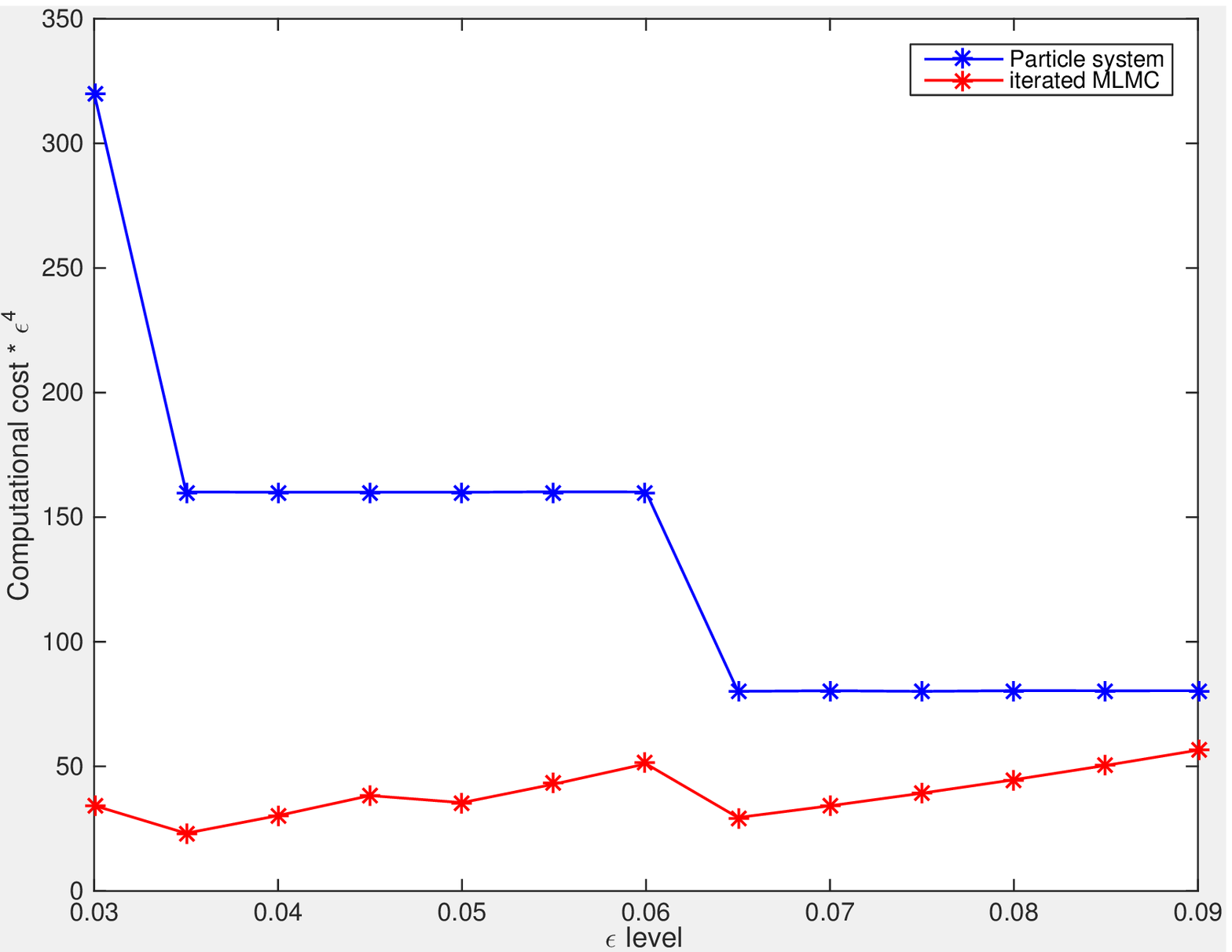}
  \caption{Iterated MLMC  vs Particle sytem}
  \label{fig:interactingexample21}
\end{subfigure}
%\begin{subfigure}{.49\textwidth}
%  \centering
%  \includegraphics[width=.9\linewidth]{sep_Chaos_vs_revised_MC_picard_vs_MLMC_cstvserror1.eps}
%  \caption{Iterated MLMC}
%  \label{fig:noninteractingexample11b}
%\end{subfigure}
\begin{subfigure}{.49\textwidth}
  \centering
 \includegraphics[width=.9\linewidth]{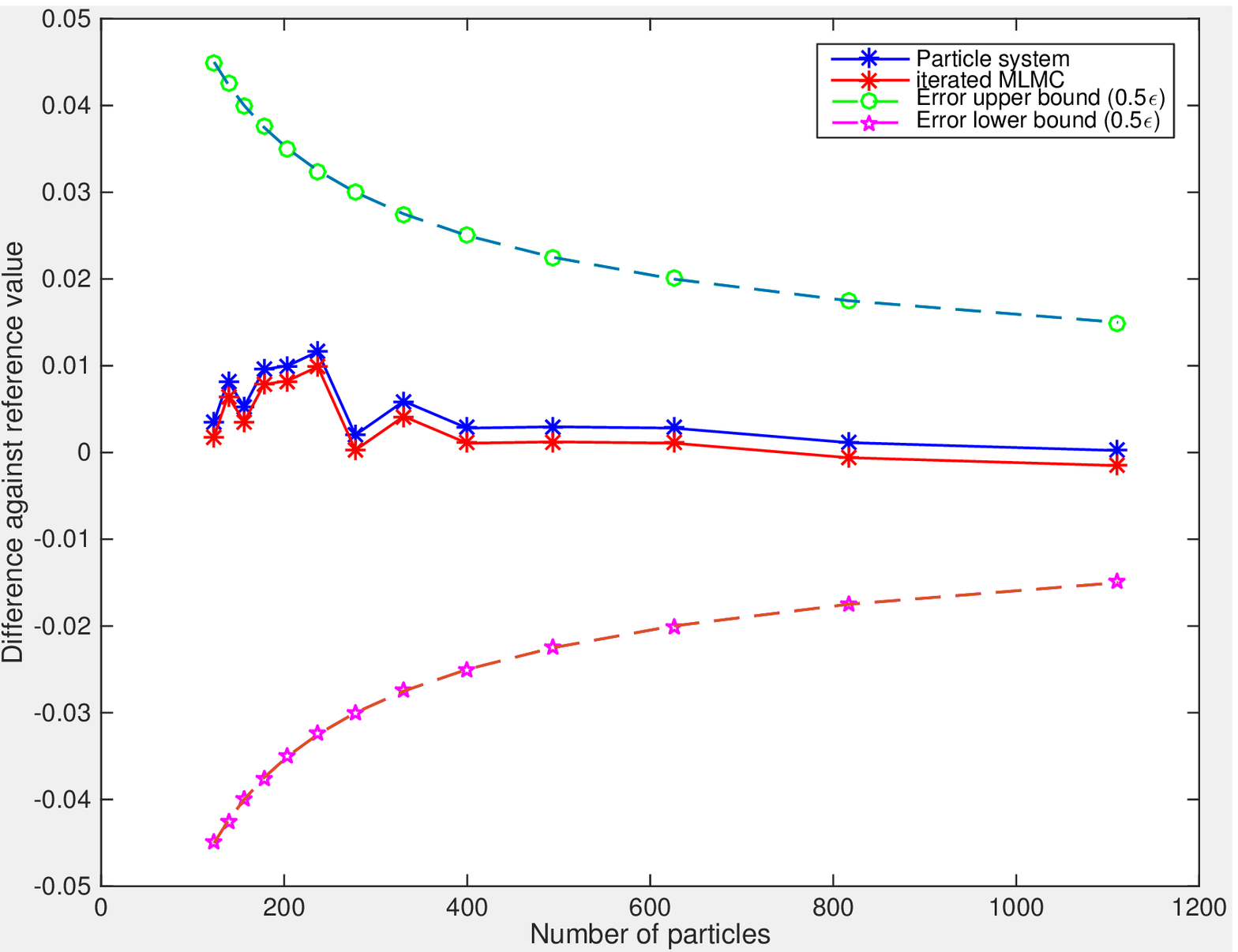}
 \caption{Approximation error}
  \label{fig:interactingexample22}
\end{subfigure}
\caption{Result of viscous Burgers equation}
\label{fig:interactingexample2}
\end{figure}
In Figure \ref{fig:interactingexample21}, the iterated MLMC achieves computational complexity of order $\eps^{-4}$. Figure \ref{fig:interactingexample22} demonstrates the similar desired behaviour of the approximation error as observed in the case of the polynomial drift.

%%%%%%%%
\appendix 

\section{Proofs and useful lemmas} \label{sec intergrabilitySDErandomcoeffinteract}
\begin{proof}[Proof of Lemma \ref{lm:intergrabilitySDErandomcoeffinteract} ]

Given any $\ell$, let us define a sequence of stopping times  $\t_M := \inf\{t\geq0: |Z_t^{\ell} - Z^{\ell}_0| \geq M\}.$ For any $t\in[0,T]$, we consider the stopped process $Z_{t\wedge \t_M}^{\ell}$ and compute by the Burkholder-Davis-Gundy and H\"older inequalities and assumptions \hvLip \, and \hLaw \, to obtain that 
\begin{eqnarray}
\bE\bigg[\sup_{0\leq u\leq t}|Z_{u\wedge \t_M}^{\ell} |^p\bigg]& \leq &c\ \bigg(\bE[|Z^{\ell}_0|^p] + t^{p-1}\bE\bigg[\int_0^t|\overline{b}(Z_{\eta(s)\wedge\t_M}^\ell, \cV_{\eta(s)} )|^p ds\bigg] \nonumber\\
&& +t^{\frac{p}{2}-1}\bE\bigg[\int_0^t\norm{\overline{\sigma}(Z_{\eta(s) \wedge\t_M }^\ell, \cV_{\eta(s)} )}^pds\bigg] \bigg). \nonumber \\
& \leq & \bigg( 1+\bE\bigg[\int_0^t \bigg| \int_\Rd|y|^p\cV_{\eta(s)}(dy) \bigg| \, ds \bigg] \nonumber \\
&& + \int_0^t\bE \bigg[\sup_{0\leq u\leq s}|Z_{u\wedge \t_M}^{\ell}|^p \bigg]\, ds \bigg). \nonumber \end{eqnarray}
Note that, by \hLaw, 
$$ \bE \bigg[\sup_{0\leq u\leq s}|Z_{u\wedge \t_M}^{\ell}|^p \bigg] \leq c \bigg( \bE \bigg[\sup_{0\leq u\leq s}|Z_{u\wedge \t_M}^{\ell} - Z_{0}^{\ell} |^p \bigg] + \bE |Z_{0}^{\ell} |^p \bigg)  < + \infty.$$
By Gronwall's lemma,	
 $$  \bE\bigg[\sup_{0\leq u\leq t}|Z_{u\wedge \t_M}^{\ell} |^p\bigg]  \leq c\ \bigg(1+\bE \bigg[\int_0^T \bigg|\int_\Rd|y|^p\cV_{\eta(s)}(dy) \bigg| \, ds \bigg] \bigg). $$
Furthermore,  since $\sup_{0\leq t\leq T}|Z_{t\wedge \t_M}^{\ell}|^p$
is a non-decreasing  sequence (in $M$) converging pointwise to $\sup_{0\leq t\leq T}|Z_t^{\ell}|^p$, the lemma follows from the monotone convergence theorem.

\end{proof}

\begin{lemma} \label{lem:conmartingaleinteract}
%Let $(\G_t)_{t\in [0,T]}$, $t\geq0$
Let $\{Q_t \}_{t \in [0,T]}$ be a cadlag square-integrable process adapted to the filtration $\{ \mathcal{F}_t \}_{t \in [0,T]}$. Suppose that $\{W_t \}_{t \in [0,T]}$ is a $\{ \mathcal{F}_t \}_{t \in [0,T]}$-Brownian motion. Let $\mathcal{G}$ be a $\sigma$-algebra such that $\mathcal{G} \subseteq \mathcal{F}_0$. Then the following equalities hold for any $t \in [0,T]$.
\begin{eqnarray}
 \text{(a)} && \bE \bigg[\int_{0}^t Q_s dW_s \, \bigg| \,  \cG \bigg] = 0,	\nonumber \\
 \text{(b)} &&  \bE \bigg[ \bigg( \int_0^t Q_s \, dW_s \bigg)^2 \bigg| \mathcal{G} \bigg] = \bE \bigg[ \int_0^t Q^2_s \,ds \bigg| \mathcal{G} \bigg]. \nonumber
 \end{eqnarray}
\end{lemma}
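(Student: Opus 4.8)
The plan is to reduce both identities to the martingale property of the It\^o integral combined with the tower property of conditional expectations, the decisive hypothesis being $\mathcal{G} \subseteq \mathcal{F}_0$.

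First I would set $M_t := \int_0^t Q_s \, dW_s$. Since $Q$ is adapted and square-integrable and $W$ is an $\{\mathcal{F}_t\}$-Brownian motion, $M$ is a square-integrable $\{\mathcal{F}_t\}$-martingale with $M_0 = 0$. For part (a), the martingale property gives $\bE[M_t \,|\, \mathcal{F}_0] = M_0 = 0$ for every $t \in [0,T]$, and since $\mathcal{G} \subseteq \mathcal{F}_0$ the tower property yields
\[
\bE[M_t \,|\, \mathcal{G}] = \bE\big[\,\bE[M_t \,|\, \mathcal{F}_0] \,\big|\, \mathcal{G}\,\big] = 0,
\]
which is exactly (a).

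For part (b), I would invoke the fact that $M_t^2 - \int_0^t Q_s^2 \, ds$ is also an $\{\mathcal{F}_t\}$-martingale (equivalently, $\langle M \rangle_t = \int_0^t Q_s^2 \, ds$ is its predictable quadratic variation, which is the process-level form of the It\^o isometry). Conditioning on $\mathcal{F}_0$ and using $M_0 = 0$ gives
\[
\bE[M_t^2 \,|\, \mathcal{F}_0] = \bE\Big[\,\textstyle\int_0^t Q_s^2 \, ds \,\Big|\, \mathcal{F}_0\Big],
\]
and a second application of the tower property with $\mathcal{G} \subseteq \mathcal{F}_0$ to both sides produces
\[
\bE[M_t^2 \,|\, \mathcal{G}] = \bE\Big[\,\textstyle\int_0^t Q_s^2 \, ds \,\Big|\, \mathcal{G}\Big],
\]
establishing (b).

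The argument is short because the substantive content --- that the It\^o integral and its compensated square are martingales for square-integrable integrands --- is part of the standard theory. The only point demanding care is the role of $\mathcal{G} \subseteq \mathcal{F}_0$: it is precisely what lets us condition down from $\mathcal{F}_0$ to $\mathcal{G}$, and without it neither identity holds in general. If a self-contained proof were wanted, the alternative would be to approximate $Q$ in $L^2(\Omega \times [0,T])$ by simple predictable processes $Q^n$, verify (a) and (b) for each $Q^n$ by direct computation exploiting the independence of future Brownian increments from $\mathcal{F}_0 \supseteq \mathcal{G}$, and pass to the limit using the $L^2$-convergence of $\int Q^n \, dW$ to $\int Q \, dW$; I expect the interchange of limit and conditional expectation in that last step (controlled by conditional Jensen together with the $L^2$ convergence) to be the only mildly technical obstacle.
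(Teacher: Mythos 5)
Your argument is correct and is exactly the standard stochastic-calculus reasoning the paper has in mind: the authors omit the proof, noting only that it ``follows from standard results of stochastic calculus,'' and your reduction to the martingale property of $\int_0^\cdot Q_s\,dW_s$ and of its compensated square, followed by the tower property via $\mathcal{G} \subseteq \mathcal{F}_0$, is precisely that standard route. No gaps; the only implicit point is that square-integrability of $Q$ ensures the It\^o integral is a true martingale rather than merely a local one, which you have in fact used correctly.
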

The proof follows from standard results of stochastic calculus and is omitted.

%\begin{remark}\label{rmk:epsm}
%Let $\gamma \in (0,1)$. Here we show two possible choices of the sequence $\{w_m\}_{m=1}^{M^*}$ satisfying properties (C1) to (C3). 
%\begin{itemize}
%\item  $$w^{(1)}_m\defeq  \max \Big\{ \Big(\frac{(M^*-m)!}{c^{M^*-m}}\Big)^{1-\alpha}, 1 \Big\}, \quad \quad \text{where} \quad \alpha\in \Big( 0,\frac{1}{2} \Big) \quad \text{or}$$ 
%\item  \begin{align*}
%	w^{(2)}_m\defeq \begin{cases}
%\max \Big\{\frac{(M^*-m-2)!}{c^{M^*-m-2}},1 \Big\}, &\quad 1 \leq m\leq M^*-2,\\
%	 1,&\quad M^*-1\leq m\leq M^*.
%	\end{cases}
%\end{align*}
%\end{itemize}
%In particular, for $w^{(1)}_m$, if $\alpha$ is set to 1, this equal weighting leads to a worse complexity with an extra $\log$ term, i.e. $\eps^{-4}|\log(\eps)|^4$.  We verify that $(w^{(2)}_m)$ satisfies properties (C1) to (C3) in Lemma \ref{lm:verficationepsm}. 
%\end{remark}

\begin{lemma}\label{lm:verficationepsm}
The sequence $\{w_m\}_{m=1}^{M^*}$ defined by 
\begin{align*}
	w_m\defeq \begin{cases}
\max \Big\{\frac{(M^*-m-2)!}{c^{M^*-m-2}},1 \Big\}, &\quad 1 \leq m\leq M^*-2,\\
	 1,&\quad M^*-1\leq m\leq M^*,
	\end{cases}
\end{align*}
satisfies properties $(C1)$ to $(C3)$ stipulated in the proof of Theorem \ref{thm:c2nonint}.
\end{lemma}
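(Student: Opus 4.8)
The plan is to verify the three conditions (C1)--(C3) directly by substituting the explicit formula for $w_m$ and reducing each resulting sum to the tail of a convergent series whose total is independent of $M^*$. Throughout I would use the substitution $j = M^*-m$, so that as $m$ ranges over $\{1,\ldots,M^*-2\}$ the index $j$ ranges over $\{2,\ldots,M^*-1\}$, and the factorial branch of the definition reads $w_m = \max\{(j-2)!/c^{j-2},1\}$. I would also note that one may assume $M^*\geq 3$ without loss of generality (which holds for all sufficiently small $\eps$), so that every factorial appearing in the definition is well defined.

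Condition (C1) is immediate, since each branch of the definition is bounded below by $1$ and $w_{M^*}=1$. For (C2) I would split the sum according to the two branches. The terms $m\in\{M^*-1,M^*\}$ contribute $c+1$. For the remaining terms, after the substitution each summand equals $\tfrac{c^j}{j!}\max\{(j-2)!/c^{j-2},1\}$, which is \emph{exactly one} of the two quantities $\tfrac{c^2}{j(j-1)}$ (factorial branch active) or $\tfrac{c^j}{j!}$ (constant branch active); in either case it is bounded by their sum, so that
\[
\sum_{m=1}^{M^*-2}\frac{c^{M^*-m}}{(M^*-m)!}\,w_m \;\leq\; \sum_{j\geq 2}\frac{c^2}{j(j-1)} + \sum_{j\geq 0}\frac{c^j}{j!} \;=\; c^2 + e^{c},
\]
using the telescoping identity $\sum_{j\geq 2}\tfrac{1}{j(j-1)}=1$ and the exponential series. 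Hence (C2) holds with $K=1+c+c^2+e^c$.

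For (C3), the reciprocal of the factorial branch satisfies $w_m^{-1}=\min\{c^{j-2}/(j-2)!,\,1\}\leq c^{j-2}/(j-2)!$. The terms $m\in\{M^*-1,M^*\}$ contribute $2$, and setting $k=j-2$ gives
\[
\sum_{m=1}^{M^*-2} w_m^{-1} \;\leq\; \sum_{k\geq 0}\frac{c^k}{k!} \;=\; e^{c},
\]
so (C3) holds with $K=2+e^c$. Taking $K$ to be the larger of the two constants completes the verification.

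The computation contains no genuine obstacle; the only point requiring care is recognizing that the $\max$ in the definition of $w_m$ is precisely calibrated so that the two \emph{competing} demands are met simultaneously: (C2) prefers $w_m$ small while (C3) prefers $w_m$ large, and the choice $w_m\approx (M^*-m-2)!/c^{M^*-m-2}$ is exactly the threshold at which the (C2) summand collapses to $c^2/(j(j-1))$ and the (C3) summand to $c^{j-2}/(j-2)!$, both tails of series with finite sums independent of $M^*$. Keeping the two branches disentangled under the substitution $j=M^*-m$ is all that the bookkeeping requires.
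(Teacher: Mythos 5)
Your proof is correct and follows essentially the same route as the paper: bounding $\max\{a,1\}$ by $a+1$ (your ``sum of the two branches'' observation is the same estimate), telescoping $\sum_j \frac{1}{j(j-1)}$ for (C2), and dominating $w_m^{-1}$ by $c^{j-2}/(j-2)!$ to get the exponential series for (C3). The only cosmetic differences are your explicit remark that $M^*\geq 3$ may be assumed and your reindexing by $j=M^*-m$; the constants obtained ($e^c+c^2+c+1$ and $e^c+2$) match the paper's.
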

\begin{proof}
First, property (C1) follows easily from the definition of $w_m$. For property (C2), we verify that 
\begin{eqnarray*}
&& \sum_{m=1}^{M^{*}}\frac{c^{M^{*}-m}}{(M^{*}-m)!}w_m \nonumber \\
&\leq& \sum_{m=1}^{M^{*}-2}\frac{c^{M^{*}-m}}{(M^{*}-m)!} \Big(\frac{(M^{*}-m-2)!}{c^{M^{*}-m-2}}+1\Big)+\sum_{m=M^*-1}^{M^{*}}\frac{c^{M^{*}-m}}{(M^{*}-m)!}\\
&=&\sum_{m=1}^{M^{*}}\frac{c^{M^{*}-m}}{(M^{*}-m)!}+c^2\sum_{m=1}^{M^{*}-2}\frac{1}{(M^{*}-m)(M^{*}-m-1)}\\
&=& \sum_{m=1}^{M^{*}}\frac{c^{M^{*}-m}}{(M^{*}-m)!}+c^2 \bigg( 1-\frac{1}{M^{*}-1} \bigg) \leq e^c  + c^2.
\end{eqnarray*}
Lastly, we show this sequence satisfies property (C3). Indeed,
\[
	\sum_{m=1}^{M^*}w_m^{-1} = \sum_{m=1}^{M^{*}-2} \frac{c^{M^{*}-m-2}}{(M^{*}-m-2)!}+2 \leq e^{c}+2.
\]
\end{proof}

\section{Algorithm for the MLMC particle system}

\begin{algorithm}
\KwIn{Initial measure $\mu^0$ for $Y^{i,0,\ell}$, global Lipschitz payoff function $C^2_p \ni P:\R^d\rightarrow\R$ and accuracy level $\eps$}
\KwOut{ $\langle \mathcal\cM_{T}^{(M)},  P \rangle $, the approximation for our goal $\bE[P(X_T)]$.}
\nl Fix parameters $M$ (see \eqref{eq:defMstar}) and $L$ (see \eqref{eq:conepsm1}) that correspond to $\eps$\;   
\nl Given $\mu^0 =\text{Law}(Y^{i,0,0})$, sample $\{Y^{i,0,0}_{t_k^L}\}_{k=0,\ldots,2^{L}}$\;
\nl \For{ $m = 1$ to $M-1$}{
	\nl During $m$th Picard step, given samples $\{Y^{i,m-1,\ell}_{t_k^\ell}\}_{k=0,\ldots,2^{\ell}}^{\ell=0,\ldots,L}$, take \eqref{eq:contiousYinteract} and run MLMC to obtain $\{Y^{i,m,\ell}_{t_k^\ell}\}_{k=0,\ldots,2^{\ell}}^{\ell=0,\ldots,L}$. This requires calculating 
	\[
	\bigg( \langle \mathcal\cM_{t_0^L}^{(m-1)},  b(x,\cdot) \rangle ,\ldots, \langle \mathcal\cM_{t_{2^{L}}^L}^{(m-1)},  b(x,\cdot)  \rangle \bigg), 
	\]  
	\[
	\bigg( \langle \mathcal\cM_{t_0^L}^{(m-1)},  \sigma(x,\cdot) \rangle ,\ldots, \langle \mathcal\cM_{t_{2^{L}}^L}^{(m-1)},  \sigma(x,\cdot)  \rangle \bigg), 
	\]  
	where in place of $x$, we put particles $\{Y^{i,m,\ell}_{t_k^\ell}\}_{k=0,\ldots,2^{\ell}-1}^{\ell=0,\ldots,L}$\;}
\nl Given samples $\{Y^{i,M-1,\ell}_{t_k^\ell}\}_{k=0,\ldots,2^{\ell}}^{\ell=0,\ldots,L}$, run standard MLMC (with interpolation) to obtain the final vector of approximations  $\bigg( \langle \cM_{t_0^L}^{(M)}, P \rangle,\ldots, \langle \cM_{t_{2^{L}}^L}^{(M)}, P \rangle \bigg)$\;
\nl {\bf Return} $\langle \cM_{T}^{(M)}, P \rangle $.
    \caption{{\bf Nested MLMC with Picard scheme} \label{Algorithminteract}}

\end{algorithm}

\section*{Acknowledgements}
We are grateful to Mireille Bossy, Mike Giles and David \u{S}i\u{s}ka for helpful comments.

\bibliographystyle{abbrv}
\bibliography{Particles.bib}

\begin{thebibliography}{10}

\bibitem{ali2012pedestrian}
A.~L.~H. Ali.
\newblock {\em Pedestrian Flow in the Mean Field Limit}.
\newblock PhD thesis, King Abdullah University of Science and Technology
  (KAUST), 2012.

\bibitem{antonelli2002rate}
F.~Antonelli and A.~Kohatsu-Higa.
\newblock {Rate of convergence of a particle method to the solution of the
  McKean--Vlasov equation}.
\newblock {\em The Annals of Applied Probability}, 12(2):423--476, 2002.

\bibitem{bossy2004optimal}
M.~Bossy.
\newblock Optimal rate of convergence of a stochastic particle method to
  solutions of 1{D} viscous scalar conservation laws.
\newblock {\em Mathematics of computation}, 73(246):777--812, 2004.

\bibitem{bossy1997comparison}
M.~Bossy, L.~Fezoui, and S.~Piperno.
\newblock Comparison of a stochastic particle method and a finite volume
  deterministic method applied to {B}urgers equation.
\newblock {\em Monte Carlo Methods and Applications}, 3:113--140, 1997.

\bibitem{bossy2002rate}
M.~Bossy and B.~Jourdain.
\newblock Rate of convergeance of a particle method for the solution of a
  $1${D} viscous scalar conservation law in a bounded interval.
\newblock {\em The Annals of Probability}, 30(4):1797--1832, 2002.

\bibitem{bossy1996convergence}
M.~Bossy and D.~Talay.
\newblock {Convergence rate for the approximation of the limit law of weakly
  interacting particles: application to the {B}urgers equation}.
\newblock {\em The Annals of Applied Probability}, 6(3):818--861, 1996.

\bibitem{bossy1997stochastic}
M.~Bossy and D.~Talay.
\newblock {A stochastic particle method for the McKean-Vlasov and the Burgers
  equation}.
\newblock {\em Mathematics of Computation of the American Mathematical
  Society}, 66(217):157--192, 1997.

\bibitem{buckdahn2017mean}
R.~Buckdahn, J.~Li, S.~Peng, and C.~Rainer.
\newblock Mean-field stochastic differential equations and associated pdes.
\newblock {\em The Annals of Probability}, 45(2):824--878, 2017.

\bibitem{bujok2013multilevel}
K.~Bujok, B.~Hambly, and C.~Reisinger.
\newblock {Multilevel simulation of functionals of Bernoulli random variables
  with application to basket credit derivatives}.
\newblock {\em Methodology and Computing in Applied Probability}, pages 1--26,
  2013.

\bibitem{carmona2013control}
R.~Carmona, F.~Delarue, and A.~Lachapelle.
\newblock {Control of McKean--Vlasov dynamics versus mean field games}.
\newblock {\em Mathematics and Financial Economics}, 7(2):131--166, 2013.

\bibitem{chassagneuxprobabilistic}
J.-F. Chassagneux, D.~Crisan, and F.~Delarue.
\newblock {A probabilistic approach to classical solutions of the master
  equation for large population equilibria}.

\bibitem{delarue2015global}
F.~Delarue, J.~Inglis, S.~Rubenthaler, and E.~Tanr{\'e}.
\newblock Global solvability of a networked integrate-and-fire model of
  {M}ckean--{V}lasov type.
\newblock {\em The Annals of Applied Probability}, 25(4):2096--2133, 2015.

\bibitem{delarue2015particle}
F.~Delarue, J.~Inglis, S.~Rubenthaler, and E.~Tanr{\'e}.
\newblock {Particle systems with a singular mean-field self-excitation.
  Application to neuronal networks}.
\newblock {\em Stochastic Processes and their Applications}, 125(6):2451--2492,
  2015.

\bibitem{hutzenthaler2016full}
W.~E, M.~Hutzenthaler, A.~Jentzen, and T.~Kruse.
\newblock On full history recursive multilevel picard approximations and
  numerical approximations of high-dimensional nonlinear parabolic partial
  differential equations.
\newblock {\em arXiv preprint arXiv:1607.03295}, 2016.

\bibitem{friedman2006stochastic}
A.~Friedman.
\newblock {\em Stochastic differential equations and applications}.
\newblock Courier Corporation, 2006.

\bibitem{giles2008multilevel}
M.~B. Giles.
\newblock {Multilevel {M}onte {C}arlo path simulation}.
\newblock {\em Operations Research}, 56(3):607--617, 2008.

\bibitem{giles2015multilevel}
M.~B. Giles, T.~Nagapetyan, and K.~Ritter.
\newblock {M}ultilevel {M}onte {C}arlo approximation of distribution functions
  and densities.
\newblock {\em SIAM/ASA Journal on Uncertainty Quantification}, 3(1):267--295,
  2015.

\bibitem{haji2016multilevel}
A.-L. Haji-Ali and R.~Tempone.
\newblock {M}ultilevel and {M}ulti-index {M}onte {C}arlo methods for
  mckean-vlasov equations.
\newblock {\em arXiv preprint arXiv:1610.09934}, 2016.

\bibitem{heinrich2001multilevel}
S.~Heinrich.
\newblock Multilevel {M}onte {C}arlo methods.
\newblock In {\em Large-scale scientific computing}, pages 58--67. Springer,
  2001.

\bibitem{kebaier2005statistical}
A.~Kebaier.
\newblock Statistical {R}omberg extrapolation: a new variance reduction method
  and applications to option pricing.
\newblock {\em The Annals of Applied Probability}, 15(4):2681--2705, 2005.

\bibitem{MR601776}
N.~V. Krylov.
\newblock {\em Controlled diffusion processes}, volume~14 of {\em Applications
  of Mathematics}.
\newblock Springer-Verlag, New York-Berlin, 1980.
\newblock Translated from the Russian by A. B. Aries.

\bibitem{krylov2002introduction}
N.~V. Krylov.
\newblock {\em Introduction to the theory of random processes}, volume~43.
\newblock American Mathematical Soc., 2002.

\bibitem{lemaire2017multilevel}
V.~Lemaire and G.~Pag{\`e}s.
\newblock Multilevel {R}ichardson--{R}omberg extrapolation.
\newblock {\em Bernoulli}, 23(4A):2643--2692, 2017.

\bibitem{lions2014cours}
P.~Lions.
\newblock Cours au coll\`{e}ge de france: Th{\'e}orie des jeux \`{a} champs
  moyens, 2014.

\bibitem{mckean1966class}
H.~McKean~Jr.
\newblock {A class of Markov processes associated with nonlinear parabolic
  equations}.
\newblock {\em Proceedings of the National Academy of Sciences of the United
  States of America}, 56(6):1907, 1966.

\bibitem{meleard1996asymptotic}
S.~M{\'e}l{\'e}ard.
\newblock {Asymptotic behaviour of some interacting particle systems;
  McKean-Vlasov and Boltzmann models}.
\newblock In {\em Probabilistic models for nonlinear partial differential
  equations}, pages 42--95. Springer, 1996.

\bibitem{parthasarathy1967probability}
K.~R. Parthasarathy.
\newblock {\em Probability measures on metric spaces}, volume 352.
\newblock American Mathematical Soc., 1967.

\bibitem{pope2000turbulent}
S.~B. Pope.
\newblock {\em Turbulent flows}.
\newblock Cambridge University Press, 2000.

\bibitem{ricketson2015multilevel}
L.~Ricketson.
\newblock A multilevel {M}onte {C}arlo method for a class of {M}ckean-{V}lasov
  processes.
\newblock {\em arXiv preprint arXiv:1508.02299}, 2015.

\bibitem{sznitman1991topics}
A.-S. Sznitman.
\newblock {\em Topics in propagation of chaos}.
\newblock Springer, 1991.

\bibitem{talay1990expansion}
D.~Talay and L.~Tubaro.
\newblock Expansion of the global error for numerical schemes solving
  stochastic differential equations.
\newblock {\em Stochastic analysis and applications}, 8(4):483--509, 1990.

\end{thebibliography}
\end{document}